\theoremstyle{plain}
\newtheorem{theorem}{Theorem}[section]
\newtheorem{ntheorem}{Theorem}
\newtheorem{prop}[theorem]{Proposition}
\newtheorem{cond}[theorem]{condition}
\newtheorem{lemma}[theorem]{Lemma}
\newtheorem{corollary}[theorem]{Corollary}
\theoremstyle{definition} 
\newtheorem{definition}[theorem]{Definition}
\theoremstyle{remark} 
\newtheorem{remark}[theorem]{Remark}
\numberwithin{equation}{section}
\let\save@mathaccent\mathaccent
\newcommand*\if@single[3]{%
	\setbox0\hbox{${\mathaccent"0362{#1}}^H$}%
	\setbox2\hbox{${\mathaccent"0362{\kern0pt#1}}^H$}%
	\ifdim\ht0=\ht2 #3\else #2\fi
}
\newcommand*\rel@kern[1]{\kern#1\dimexpr\macc@kerna}
\newcommand*\overbar[1]{\@ifnextchar^{{\wide@bar{#1}{0}}}{\wide@bar{#1}{1}}}
\newcommand*\wide@bar[2]{\if@single{#1}{\wide@bar@{#1}{#2}{1}}{\wide@bar@{#1}{#2}{2}}}
\newcommand*\wide@bar@[3]{%
	\begingroup
	\def\mathaccent##1##2{%
		\let\mathaccent\save@mathaccent
		\if#32 \let\macc@nucleus\first@char \fi
		\setbox\z@\hbox{$\macc@style{\macc@nucleus}_{}$}%
		\setbox\tw@\hbox{$\macc@style{\macc@nucleus}{}_{}$}%
		\dimen@\wd\tw@
		\advance\dimen@-\wd\z@
		\divide\dimen@ 3
		\@tempdima\wd\tw@
		\advance\@tempdima-\scriptspace
		\divide\@tempdima 10
		\advance\dimen@-\@tempdima
		\ifdim\dimen@>\z@ \dimen@0pt\fi
		\rel@kern{0.6}\kern-\dimen@
		\if#31
		\overline{\rel@kern{-0.6}\kern\dimen@\macc@nucleus\rel@kern{0.4}\kern\dimen@}%
		\advance\dimen@0.4\dimexpr\macc@kerna
		\let\final@kern#2%
		\ifdim\dimen@<\z@ \let\final@kern1\fi
		\if\final@kern1 \kern-\dimen@\fi
		\else
		\overline{\rel@kern{-0.6}\kern\dimen@#1}%
		\fi
	}%
	\macc@depth\@ne
	\let\math@bgroup\@empty \let\math@egroup\macc@set@skewchar
	\mathsurround\z@ \frozen@everymath{\mathgroup\macc@group\relax}%
	\macc@set@skewchar\relax
	\let\mathaccentV\macc@nested@a
	\if#31
	\macc@nested@a\relax111{#1}%
	\else
	\def\gobble@till@marker##1\endmarker{}%
	\futurelet\first@char\gobble@till@marker#1\endmarker
	\ifcat\noexpand\first@char A\else
	\def\first@char{}%
	\fi
	\macc@nested@a\relax111{\first@char}%
	\fi
	\endgroup
}
\title{$C^*$-Algebraic Higher Signatures on Non-Witt Space}
\author{Mingyu Liu}
\begin{document}
\begin{abstract}
 Signature plays an important role in geometry and topology. In the space with singularity, Goresky and MacPherson extend the signatures to oriented pseudomanifolds with only even codimensional stratums by using generalized Poincare duality of intersection homology. After that Siegel  extended the signature on Witt spaces. Higson and Xie study the $C^*$- higher signature on Witt space. Non-Witt spaces need a refined intersection homology to hold Poincare duality. Followed by the combinatorial framework developed by Higson and Roe, this paper construct the $C^*$-signature on non Witt space with noncommutative geometric methods. In conical singular case, we compare analytical signature  of smooth stratified non Witt space by Albin, Leichtnam, Mazzeo and Piazza. 
\end{abstract}
\maketitle

\section{Introduction}

Signature $\mathrm{sign}(M)$ of oriented $4k$ manifold $M$ is an invariant of nondegenerate symmetric quadratic forms in the middle cohomology $\mathrm{H}^{2k}(M)$.  Connection with signature and $L$ class by Hirzebruch inspired many research such as index theorem. If manifold is not simply connected and $BG$ is the classify space for the fundamental group $G$, for homology class $[x] \in \mathrm{H}^*(BG, \mathbb{Q})$, the higher signature is defined as  $\operatorname{sign}_{[x]}(M, f)=\left\langle\mathcal{L}(M) \cup f^{*}[x],[M]\right\rangle $ . Famous Novikov conjecture states that higher signature is homotopy invariant. 

However, in pseudomanifold there is no classical signature because of lacking Poincare duality. In the pseudomanifold, singularity prevent the well defined cup product. 
Goresky and MacPherson introducing intersection homology to generalize the Poincare Duality with perversity \cite{G}. Then the signature with cobordism invariance extend to the pseudomanifold with only even codimension stratum.

For a special case of the oriented pseudomanifold called Witt space, Siegel prove the Poincare Duality and define the signature with similar property \cite{S}. Albin, Leichtnam, Mazzeo and Piazza used an analytic approach to study the higher signature index class for Witt space in \cite{AAW}. Roe and Higson in \cite{HR1} , \cite{HR2} and \cite{HR3} develop a framework to connect algebraic surgery and k theory of $C^*-$ algebra. 
Higson and Xie use a combinatorial method to study the $C^*$-algebraic higher signatures of Witt spaces\cite{X}. 

If the pseudomanifold is not a Witt space, there is not any self dual chain in terms of the sheafification of intersection homology. 
However, some important space for example Zucker’s reductive Borel–Serre compactification of a locally symmetric space  in \cite{zucker19822} is generally not a Witt space. It is worth to develop the signature theory on the non Witt space. 

Cheeger in \cite{cheeger1979spectral} develop $\mathrm{L}^2$ cohomolgy theory to study riemannian space with conical singularity. 
Cheeger, Goresky and McPherson find $\mathrm{L}^2$ cohomology is closely connected lower and upper middle perversity intersection homology in \cite{cheeger1982cohomology}. Also in \cite{cheeger1979spectral} he finds the Lagrange condition of Poincare duality of boundary which called Cheeger boundary condition.  Albin, Leichtnam, Mazzeo and Piazza developed a theory to use iterated fibration structure to generalize smooth stratified space. In A pseudomanifold which has self dual mezzoperversities called Cheeger space.  They study the higher signature index class for Cheeger space in \cite{AA}.

In sheaf theoretical intersection homology, Banagl construct a self dual sheaf which is compatible with intersection homology in \cite{B}. The space with self dual sheaf is called $L$ space. Then he construct $\mathrm{L}$ class of stratified non Witt space in \cite{banagl2006class}.

In this paper, I will to construct the $C^*$-algebraic higher signature on the non-Witt space $X$. We actually build a geometrically controlled Poincare complex on $X$. This is not a direct application of other research of non Witt space. Then we follow the framework of Higson and Xie \cite{X} in Witt space. The advantage of this framework is combinatorial and easy to compute K homology class. 

Below is the organization of this paper.  

In section two, we introduce the geometric and algebra preliminaries and notation. Here the  property of Hilbert-Poincare complex and the filtered complex $W^{\bar p}_*(X)$ with perversity $\bar p$ in pseudomanifold are the fundamental knowledge in the later argument.  

In section 3, we study non Witt space $X$ where the Witt condition only fails in the link of conical singularity. If the signature of links is $0$, we construct two equivalent modified chain complex $\widetilde{W}^{\bar{m}}_j(X)$ and $\widetilde{W}^{\bar{n}}_j(X)$ which interpolate between filtered simplicial complex with lower middle perversity and upper middle perversity:
$$W^{\bar{m}}_j(X)\hookrightarrow\widetilde{W}^{\bar{m}}_j(X)\hookrightarrow \widetilde{W}^{\bar{n}}_j(X) \hookrightarrow W^{\bar{n}}_j(X).$$ 

Next step is prove the generalized Poincare duality map $\widetilde{\mathbb{P}}$ \ from  \ $\widetilde{W}_{\bar{m}}^{*}(X)$ to $\widetilde{W}^{ \bar{n}}_{n-*}(X)$ 
$$\widetilde{\mathbb{P}}:=-\widetilde{\cap} [X]:\widetilde{W}_{\bar{m}}^{j}(X)\to\widetilde{W}^{\bar{n}}_{n-j}(X),$$
is a geometrically controlled chain equivalence. Then $X$ is a geometrically controlled Poincare pseudomanifold.  After the $\ell^2$ completion, we can construct the signature on the Hilbert Poincare complex based on $\widetilde{W}^{\bar{m}}_j(X)$. We follow the Higson and Roe's study of signature on Hilbert Poincare complex. Let the geometrically controlled Poincare pseudomanifold without Witt condition be $L$ space. We have following theorem: (See section \ref{sign} and theorem 3.4.1 for the detail)

\begin{ntheorem}\begin{enumerate}
\item The $C^*-$ algebraic higher signatures $\operatorname{sign}_{\Gamma}(X, f) \in K_{n}\left(C_{r}^{*}(\Gamma)\right)$ of L spaces $X$ are invariant under L cobordism.
That is , if $X_1$ and $X_2$ are n dimensional two closed oriented L spaces with continuous maps $f_1 : X_1 \to B\Gamma$ and $f_2 : X_2 \to B\Gamma.$ Suppose $X_1$ and $X_2$ are $\Gamma$-equivariantly L-cobordant, then
$$\operatorname{sign}_{\Gamma}\left(X_{1}, f_{1}\right)=\operatorname{sign}_{\Gamma}\left(X_{2}, f_{2}\right).$$ 

\item $C^*-$ algebraic higher signatures of L spaces are invariant under stratified homotopy equivalences which keep the Lagrange structure. Suppose $X$ and $Y$ are two closed oriented L spaces, and $f:Y \to B\Gamma $ is a continuous map. If $\phi :X\to Y$ is a stratified homotopy equivalence and keep the Lagrange structure, then
$$\operatorname{sign}_{\Gamma}(X, f \circ \phi)=\operatorname{sign}_{\Gamma}(Y, f)$$

\end{enumerate}
\end{ntheorem}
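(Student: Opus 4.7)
The plan is to reduce both statements to the general machinery of Higson and Roe on signatures of geometrically controlled Hilbert--Poincar\'e complexes. Since the preceding sections of the paper already produce, from any closed oriented L space $X$ with a map $f:X\to B\Gamma$, a $C^{*}_{r}(\Gamma)$-linear Hilbert--Poincar\'e complex built from the $\ell^{2}$-completion of $\widetilde{W}^{\bar m}_{*}(X)$ pulled back along $f$, the task reduces to checking that the two geometric operations of the theorem (L-cobordism and stratified Lagrangian-preserving homotopy equivalence) produce, respectively, a geometrically controlled Hilbert--Poincar\'e \emph{pair} and a geometrically controlled Hilbert--Poincar\'e \emph{chain equivalence}. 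Once this is done, the bordism and homotopy invariance theorems of Higson--Roe give the two conclusions directly.

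For part (1), I would first extend the modified complexes $\widetilde{W}^{\bar m}_{*}$ and $\widetilde{W}^{\bar n}_{*}$ to L spaces with boundary by imposing at the singular strata the same Lagrangian modification used in Section 3 and, at the boundary, relative chain conditions. Given an L-cobordism $W$ between $(X_{1},f_{1})$ and $(X_{2},f_{2})$, this produces a relative Poincar\'e duality map $\widetilde{\mathbb{P}}_{W}\colon \widetilde{W}^{j}_{\bar m}(W,\partial W)\to \widetilde{W}^{\bar n}_{n+1-j}(W)$ together with its boundary restriction to $\widetilde{\mathbb{P}}_{X_{1}}\sqcup (-\widetilde{\mathbb{P}}_{X_{2}})$. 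I would argue, by a relative version of the cone-on-link argument used for the absolute case in Section 3, that $\widetilde{\mathbb{P}}_{W}$ is a geometrically controlled chain equivalence. After pulling back to the universal $\Gamma$-cover via $F\colon W\to B\Gamma$ extending $f_{1}\sqcup f_{2}$ and $\ell^{2}$-completing, one obtains a $C^{*}_{r}(\Gamma)$-linear Hilbert--Poincar\'e pair whose boundary is the formal difference of the Hilbert--Poincar\'e complexes of $X_{1}$ and $X_{2}$; the Higson--Roe bordism invariance then yields $\operatorname{sign}_{\Gamma}(X_{1},f_{1})=\operatorname{sign}_{\Gamma}(X_{2},f_{2})$ in $K_{n}(C^{*}_{r}(\Gamma))$.

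For part (2), I would use the assumption that $\phi\colon X\to Y$ preserves the Lagrangian structure to show that the induced simplicial-level chain map $\phi_{*}\colon W^{\bar m}_{*}(X)\to W^{\bar m}_{*}(Y)$ descends to a chain map between the modified complexes $\widetilde{W}^{\bar m}_{*}(X)\to \widetilde{W}^{\bar m}_{*}(Y)$, and dually on $\widetilde{W}^{*}_{\bar m}$. Standard stratified simplicial approximation, followed by the usual mapping-cylinder argument, gives a homotopy inverse that is also geometrically controlled and Lagrangian-compatible, so that $\phi$ induces a chain homotopy equivalence of Hilbert--Poincar\'e complexes after $\ell^{2}$-completion. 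Applying the Higson--Roe homotopy invariance of signatures yields $\operatorname{sign}_{\Gamma}(X,f\circ\phi)=\operatorname{sign}_{\Gamma}(Y,f)$.

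The main obstacle is the relative theory needed in part (1): the Lagrangian modification in Section 3 was defined in a purely closed setting, and extending it across a cobordism requires that the link structure along $\partial W$ be compatible with the boundary Lagrangian in a way that still gives a Poincar\'e chain equivalence. Verifying this compatibility near singular strata meeting $\partial W$, and in particular checking that the relative duality map $\widetilde{\mathbb{P}}_{W}$ fits into a short exact sequence with the absolute $\widetilde{\mathbb{P}}_{X_{i}}$, is where the bulk of the work lies; once that is established, both statements follow from the abstract Hilbert--Poincar\'e framework.
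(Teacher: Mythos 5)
Your proposal follows essentially the same route as the paper: reduce both claims to Higson--Roe's bordism and homotopy invariance of signatures of (geometrically/analytically controlled) Hilbert--Poincar\'e complexes, by showing that an L-cobordism yields a Hilbert--Poincar\'e pair and a Lagrangian-preserving stratified homotopy equivalence yields a Hilbert--Poincar\'e chain equivalence. You are, if anything, more explicit than the paper about the one genuinely nontrivial point --- the paper dismisses the construction of the relative duality map and the pair structure on $W$ with ``Clearly, this L cobordism give the Hilbert Poincare pair'' --- whereas you correctly flag that extending $\widetilde{W}^{\bar m}_{*}$ to an L space with boundary and verifying the relative Poincar\'e chain equivalence near singular strata meeting $\partial W$ is where the real work lies.
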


For section 6, we build a self dual chain complex of Non Witt space $X$ when $X$ exists the compatible Lagrange structure on every odd codimensional stratum. We can construct an iterated modified $W^{m}_*(X)[i]$ on this space for every odd codimensional stratum $\chi_{n-2s_i-1}$. Let the final chain be $\widetilde W^{m}_*(X)$. Then we will show the non Witt space $X$ which admit $\widetilde W^{m}_*(X)$ is Poincare pseudomanifold. The construction of signature is basically same with the one of conical case.

For general non Witt space $X$, the condition of Poincare pseudomanifold for Non Witt space $X$ is existence of the compatible Lagrange structure on every odd codimensional stratum. It still can prove $X$ is a Poincare pseudomanifold. Let the final chain be $\widetilde W^{m}_*(X)$. (See section 4.2 for detail)

\begin{ntheorem} If $X$ is the n dimensional oriented pseudomanifold has enough compatible Lagrange structure to construct $\widetilde W^{m}_*(X)$, then Poincare dual map with fundamental class $[X]$ is geometrically controlled chain equivalence for the chain $\widetilde{W}^{ \bar{m}}_{*}(X)$. 
\end{ntheorem}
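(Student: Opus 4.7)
The plan is to reduce the general case to the conical case of Section 3 by an induction on the odd codimensional strata of $X$, ordered by increasing dimension (equivalently, decreasing codimension). Enumerate the odd codimensional strata as $\chi_{n-2s_1-1}, \chi_{n-2s_2-1}, \dots, \chi_{n-2s_k-1}$ with $s_1 \geq s_2 \geq \cdots \geq s_k$, and let $\widetilde{W}^{\bar m}_*(X)[i]$ denote the intermediate complex obtained after modifying along the first $i$ strata. By hypothesis, each such stratum carries a compatible Lagrange structure in the sense needed to perform the local interpolation between lower and upper middle perversity.

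\textbf{Base step.} At stage $i=1$, work in a regular neighborhood of the deepest odd codimensional stratum $\chi_{n-2s_1-1}$. Since this stratum is deepest, no further singular stratum lies inside its link, so its link is effectively a Witt space in the sense used in Section~3. The Lagrange structure there reproduces precisely the hypothesis of the conical case, and I would copy verbatim the construction of $\widetilde{W}^{\bar m}_j(X)$ and $\widetilde{W}^{\bar n}_j(X)$ from Section~3, producing an inclusion chain $W^{\bar m}_* \hookrightarrow \widetilde{W}^{\bar m}_*(X)[1] \hookrightarrow \widetilde{W}^{\bar n}_*(X)[1] \hookrightarrow W^{\bar n}_*$, together with a geometrically controlled chain equivalence $\widetilde{\mathbb{P}}[1]: \widetilde{W}^{j}_{\bar m}(X)[1] \to \widetilde{W}^{\bar n}_{n-j}(X)[1]$ furnished by capping with $[X]$. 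This is Theorem~3.4.1 applied locally.

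\textbf{Inductive step.} Assuming $\widetilde{W}^{\bar m}_*(X)[i-1]$ has been constructed and the cap product gives a geometrically controlled chain equivalence to its upper-perversity counterpart, I would pass to the next odd codimensional stratum $\chi_{n-2s_i-1}$. The key point is that, from the viewpoint of this stratum, the deeper strata already processed behave like ordinary Witt data with respect to the modified complex $\widetilde{W}^{\bar m}_*(X)[i-1]$, because the previous modifications have replaced the non-self-dual middle-perversity link contributions by their Lagrange refinements. Hence the link of $\chi_{n-2s_i-1}$, viewed through the chains $\widetilde{W}^{\bar m}_*(X)[i-1]$, carries a well-defined signature pairing whose vanishing or whose Lagrange trivialization is supplied by the assumed compatible Lagrange structure. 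I would then apply the same interpolation construction of Section~3 to produce $\widetilde{W}^{\bar m}_*(X)[i]$ and verify that the Poincare cap-product map remains a geometrically controlled chain equivalence on the enlarged complex; this uses the five-lemma applied to the Mayer--Vietoris-type decomposition separating a tubular neighborhood of $\chi_{n-2s_i-1}$ from its complement.

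\textbf{Conclusion and obstacle.} Setting $\widetilde{W}^{\bar m}_*(X) := \widetilde{W}^{\bar m}_*(X)[k]$ completes the construction; the resulting cap-product $\widetilde{\mathbb{P}} = - \widetilde{\cap}[X]$ is a geometrically controlled chain equivalence by composing the equivalences obtained at each stage, all of which preserve geometric control because they are built from simplicial operations with uniformly bounded diameter. The main obstacle I anticipate is the \emph{compatibility} clause: one must verify that the Lagrange structure on a given stratum $\chi_{n-2s_i-1}$ is genuinely compatible with the already-modified complex coming from deeper strata, so that the interpolation at stage $i$ is well defined and self-consistent. This is where the hypothesis ``enough compatible Lagrange structure'' must be used in its strongest form: it should guarantee that the Lagrange subspace at stratum $i$ restricts correctly to the induced pairing on $\widetilde{W}^{\bar m}_*(X)[i-1]$ near the link, rather than on the unmodified middle-perversity chains. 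Once this compatibility is unpacked, the remaining verifications --- chain map property, boundedness of the propagation, and the chain-homotopy inverse --- reduce to the single-stratum arguments of Section~3.
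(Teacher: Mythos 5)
There is a genuine gap in your base step, and it stems from running the induction in the wrong direction. You process the strata from deepest (largest $s_i$) to shallowest, and claim at stage $i=1$ that ``since this stratum is deepest, no further singular stratum lies inside its link, so its link is effectively a Witt space in the sense used in Section~3.'' This is not correct. The link of a point $x\in\chi_{n-2s_1-1}$ is a $2s_1$-dimensional pseudomanifold whose stratification is induced from that of $X$: every \emph{shallower} stratum $\chi_{n-2k-1}$ with $k<s_1$ whose closure contains $\chi_{n-2s_1-1}$ leaves a trace in $L(\chi_{n-2s_1-1},x)$ of the same codimension $2k+1$, with the same intrinsic link. If such a $\chi_{n-2k-1}$ is one of the exceptional (non-Witt) strata, then $L(\chi_{n-2s_1-1},x)$ inherits a non-Witt odd-codimensional stratum and is therefore not a Witt space, so Theorem~3.4.1 and the conical construction of Section~3 cannot be invoked there. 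What is true is the opposite: the link of the \emph{shallowest} exceptional stratum sees only strata of strictly smaller codimension, which by hypothesis all satisfy the Witt condition, so that link is Witt and the single-stratum machinery applies cleanly.

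This is why the paper's construction (Section~6.1.2 and its extension in Section~6.2) iterates from the smallest $s$ to the largest, i.e.\ from shallow to deep, with $s_i=s_0+i$ increasing. The non-interference lemma \ref{abd} is likewise stated and proved only in the direction needed for that order: a modification at codimension $2s+1$ does not change the link homology of any stratum of codimension $2k+1$ with $k<s$. That gives exactly the ``earlier modifications are not undone by later ones'' property for the shallow-to-deep iteration. In your deep-to-shallow order you would need the converse statement --- that modifying a shallower stratum leaves the already-achieved self-duality at the deeper strata intact --- and that is false in general: the chains added or removed when modifying at codimension $2k+1$ live in dimensions between $k+1$ and $n-k$, which contains dimension $s$ for every $s>k$, so the shallow modification can perturb precisely the middle-degree link homology of the deeper strata that you have already treated. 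Reversing the order of your induction (shallowest exceptional stratum first, then deeper, recomputing the link homology with respect to the already-modified chain at each stage, and using \ref{abd} to justify that no earlier stage is disturbed) recovers the paper's argument; the remaining ingredients you list --- Mayer--Vietoris, bounded propagation, the chain-homotopy inverse --- are then used as in Lemma~\ref{pl}.
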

\subsection*{Acknowledgements}
The author wants to thank my advisor Zhizhang Xie for many stimulating direction over the years. 
\section{Preliminaries}
Some basic technique are given in this section. First is the quick introduction of signature . Section 2 is a description of k theory of $C^*-$ algebra and index. Then we introduce the framework of Hilbert-Poincare complex in section 3.  Poincare duality in the category of geometric module is introduced in section 4 .
Then we introduce the intersection homology in pseudomanifold in section 5.
For developing a self dual chain complex which is geometrically controlled automatically, section 6 discuss a filtered simplical complex $W^{p}_*(X)$.

\subsection{Poincare Duality and Signatures}
 Poincare in \cite{poincar2010papers} founded the framework of algebraic topology. For a compact oriented $n$-manifold $M$, the intersection $\cap$ of $i$-cycle $c_1$ and $j$-cycle $c_2$ is a well defined $(i+j-n)$-cycle whose homology class depends only the homology class of $c_1$ and $c_2$. 
This means intersection define a product of homology :$H_{i}(X) \times H_{j}(X) \to H_{i+j-n}(X).$ 
Poincare Duality $\mathbb{P}$ is isomorphism from $H^k(M)$ to $H_{n-k}(M)$. It guarantees the product by intersection is a nondegenerate bilinear form :
 $$H_{k} (M,\mathbb{Q}) \times H_{n-k}(M,\mathbb{Q}) \stackrel{\cap}\longrightarrow H_{0}(M,\mathbb{Q}) \to \mathbb{Q}.$$
For the dimension of manifold  is $4k$, the bilinear form is symmetric:
 $$H_{2k} (M,\mathbb{Q}) \times H_{2k}(M,\mathbb{Q}) \to \mathbb{Q}.$$
After diagonalization, assume the symmetric bilinear form is $(\eta^{+},\eta^{-})$. Signature of $M$, denoted as $\mathrm{sign}(M)$, is defined to be $\eta^{+}-\eta^{-}$. 

Signature is a homotopy invariant because it only relays on homology.  Given two manifold $M$ and $N$,if there exists a compact manifold $W$ whose boundary is the disjoint union of $M$ and $N$, $\partial W=M \sqcup N $, we say $M$ and $N$ are cobordism. 
Thom in \cite{thom1954quelques} prove that Signature is cobordism invariant. And he proved signature of manifold has these propositions: 
\begin{enumerate}
\item $\mathrm{sign}(M\sqcup M') = \mathrm{sign}(M)+\mathrm{sign}(M')$.
\item For the product of $M$ and $M'$,  $\mathrm{sign}(M\times M)=\mathrm{sign}(M)\cdot \mathrm{sign}(M)$.
\item if M is the oriented boundary of a manifold then we have $\mathrm{sign}(M) = 0.$

\end{enumerate}

We can define the signature as the index of signature operator. Consider the square-integrable de Rham complex on Riemannian manifold $M$ of $4k$, $d: \Omega^{p}(M) \rightarrow \Omega^{p+1}(M)$ is the exterior derivative, $d^{*}$ is the adjoint operator with inner product $ \Omega^{p+1}(M) \rightarrow \Omega^{p}(M)$. 
 We define $D=d+d^*$ to be the signature operator of $M$. It maps $\Omega^{+}(M) \rightarrow \Omega^{-}(M)$. Then the index of $D$ is
$$\mathrm{index}(D)=\operatorname{dim}\left(\operatorname{ker} D\right)-\operatorname{dim}\left(\operatorname{coker} D\right)$$
Because of Hodge theory $\Omega(M)=\operatorname{ker} \triangle \oplus \operatorname{ker} d \oplus \operatorname{ker} d^{*}$,  we know:
$$\mathrm{index}(D)=\mathrm{sign}(M).$$

If the $L$ polynomial is the multiplicative formal power series of $\frac{\sqrt{z}}{\tanh (\sqrt{z})}$ and 
$p_k$ is the Pontrjagin classes of vector bundle $E$ over $M$ $p_{k}(E, \mathbf{Q}) \in H^{4k}(M, \mathbf{Q})$.
The Hirzebruch's signature theorem connect signature of closed and oriented $4n$-manifold $M$ with L class $L(M)= L_{n}\left(p_{1}(M), \ldots, p_{n}(M)\right)$:
$$\operatorname{sign}(M)=\langle L(M),[M]\rangle \in \mathbb{Z}.$$

Then Hirzebruch's work and Riemann-Roch theorem motivated Atiyah and Singer the prove the famous index theorem in \cite{atiyah1968index}, \cite{atiyah1968index2} and \cite{atiyah1973heat}. 

For non simply connected manifold $M$. Suppose $B\Gamma$ is the classifying space for $\Gamma=\pi_1(M)$ and f : $M \to B\Gamma$ be a continuous map. For each cohomology class $[x] \in H^*(B\Gamma;Q)$, the so called higher signature class is defined as : 
$$\left\langle f^{*}(x) \cup L_{i}(M),[M]\right\rangle \in \mathbb{Q}$$
The higher signature class can be seen as a special case of K-theoretical higher index.
\subsection{$C^*-$ algebra and Index}\label{higherindex}
Begin with Connes in \cite{connes1994noncommutative}, Noncommutative geometry contain a large area in mathematics now. K theory plays an key role in studying the topology especially in classify $C^{*}-$ algebra and index theory. 
This section is based on the textbook of \cite{willettyu} , \cite{roe1993coarse} and a survey \cite{xie2019higher}.  

Let X be a proper metric space. A nondegenerate $X$-module is a separable Hilbert space $H$ equipped with a nondegenerate $*$-representation of $C_0(X)$. With help of $X$- module, we can introduce two important $C^{*}-$ algebra : Roe algebra and localization algebra.
\begin{definition}\cite{willettyu}
	Let $H_X$ be a $X$-module and $T$ a bounded linear operator acting on $H_X$. 
	\begin{enumerate}
		\item The propagation of $T$ is a nonnegative real number 
		$$ \textup{sup}\{ d(x, y)\mid (x, y)\in \textup{supp}(T)\},$$
		where $\textup{supp}(T)$ is  the complement (in $X\times X$) of the set of points $(x, y)\in X\times X$ for which there exist $f, g\in C_0(X)$ such that $gTf= 0$ and $f(x)\neq 0$, $g(y) \neq 0$;
		\item $T$ is locally compact if $fT$ and $Tf$ are compact for all $f\in C_0(X)$; 
		\item $T$ is pseudo-local if $[T, f]$ is compact for all $f\in C_0(X)$.  
	\end{enumerate}
\end{definition}
Motivated by local index, Yu in \cite{yu1997localization} introduced localization algebra. 
\begin{definition}\cite{willettyu}
	Let $H_X$ be a standard nondegenerate $X$-module and $\mathcal B(H_X)$ the set of all bounded linear operators on $H_X$.  
	\begin{enumerate}
		\item The Roe algebra $C^\ast(X)$ of $X$ is the $C^\ast$-algebra generated by all locally compact operators in $\mathcal B(H_X)$ with finite propagation.
		\item Localization algebra $C_L^\ast(X)$  is the $C^\ast$-algebra generated by all bounded and uniformly norm-continuous functions $f: [0, \infty) \to C^\ast(X)$   such that 
		$$ \textup{propagation of $f(t) \to 0 $, as $t\to \infty$. } $$ 
		\item $C_{L, 0}^\ast(X)$ is the kernel of the evaluation map 
		$$ \textup{ev} : C_L^\ast(X) \to C^\ast(X),  \quad   \textup{ev} (f) = f(0).$$
		In particular, $C_{L, 0}^\ast(X)$ is an  ideal of $C_L^\ast(X)$.
	\end{enumerate}
\end{definition}

If X is a locally compact metric space X with a proper and
  isometric action of $\Gamma$.  Let $H_X$ be a $X$-module equipped with a covariant unitary representation of $\Gamma$. We call $(H_x,\Gamma,\phi)$ is covariant system if:
  $$\pi(\gamma)(\varphi(f) v)=\varphi\left(f^{\gamma}\right)(\pi(\gamma) v).$$ 
 Here $\phi$ is the representation of $C_0(X)$, $\pi$ is the representation of $\Gamma$ , and $f \in C_{0}(X), \gamma \in \Gamma, v \in H_{X}$ $f^{\gamma}(x)=f\left(\gamma^{-1} x\right)$. 
 We assume the covariant system $(H_x,\Gamma,\phi)$ is admissible in sense of \cite{yu2007characterization}.
we use $\mathbb{C}[X]^{\Gamma}$ to denote the $*$-algebra of all $\Gamma$-invariant locally compact operators with finite
propagations in $B(H_X)$. We define the equivariant Roe algebra $C^*(X)^{\Gamma}$ to be the completion of $\mathbb{C}[X]^{\Gamma}$ in $B(H_x)$. In this situation, $C^*(X)^{\Gamma}$ is $*$-isomorphic to $C_r^*(\Gamma)\otimes K$. 
Kasparov in \cite{kasparov1975topological} introduce K-homology which is abstract elliptic operator to study index theory further.
\begin{definition}\cite{kasparov1975topological}
Let $X$ be a locally compact metric space with a proper and cocompact isometric action of $\Gamma$. $H_X$ is an admissible $(X,\Gamma)$-module, $F$ is $\Gamma$-equivariant and $F\in B(H_X)$. The K-homology groups $K_*^{\Gamma}(X)$ are generated by the following cycles modulo certain equivalence relations: 
\begin{enumerate}
    \item an even cycle for $K_0^{\Gamma}(X)$ is a pair $(H_X,F)$ such that, $F^*F-I$ and $FF^*-I$ are locally compact and $[F,f]=Ff-fF$ is compact for all $f\in C_0(X)$.
    \item an odd cycle for $K_1^{\Gamma}(X)$ is a pair $(H_X,F)$ such that $F^2 -I$ and $F-F^{*}$ are locally compact and $[F,f]$ is compact for all $f\in C_0(X)$.
\end{enumerate}
\end{definition}

Given a short exact sequence of $C^*-$ algebra:
$$0 \rightarrow \mathcal{J} \rightarrow \mathcal{A} \rightarrow \mathcal{A} / \mathcal{J} \rightarrow 0,$$ 
we know a six-term exact sequence in K-theory. The boundary is $\partial_{0}: K_1(\mathcal{A} / \mathcal{J}) \rightarrow k_0(\mathcal{J}).$ Suppose $u$ is an invertible element in $\mathcal{A} / \mathcal{J}$, and $v$ is the inverse of $u$. Let $U$, $V$ be the lifts of $u$ and $v$ in $A$. We define:
$$W=\left(\begin{array}{cc}1 & U \\ 0 & 1\end{array}\right)\left(\begin{array}{cc}1 & 0 \\ -V & 1\end{array}\right)\left(\begin{array}{cc}1 & U \\ 0 & 1\end{array}\right)\left(\begin{array}{cc}0 & -1 \\ 1 & 0\end{array}\right) \ \text{and} \ e_{11}=\left(\begin{array}{ll}1 & 0 \\ 0 & 0\end{array}\right)$$
Then $P=We_{11}W^{-1}-e_{11}$ is an idempotent of $ \mathcal{J}$.
Define the index of $u$ to be $$\partial([u])\vcentcolon=[P]-\left[e_{11}\right] \in K_{0}(\mathcal{J}).$$

Suppose $(H_X,F)$ is a even cycle of $K_0^{\Gamma}(X)$. Choose a $\Gamma-$ invariant locally finite open cover $\{U_i\}$ of $X$ with diameter $(U_i) < c$ for fixed $c$. If $\{\phi_i\}$ is a $\Gamma$-invariant continuous partition of unity subordinate to $\{U_i\}$, define $\mathcal{F}$:
$$\mathcal{F}=\sum_{i} \phi_{i}^{1 / 2} F \phi_{i}^{1 / 2}.$$
$\partial([\mathcal{F}])$ is the higher index of $(H_X,F)$

For the local index, we change $c$ to $1/n$ for the cover $\{U_i\}$. For $t \in[n, n+1]$, define $\mathcal{F}(t)$:
$$\mathcal{F}(t)=\sum_{j}(1-(t-n)) \phi_{n, j}^{1 / 2} \mathcal{F} \phi_{n, j}^{1 / 2}+(t-n) \phi_{n+1, j}^{1 / 2} \mathcal{F} \phi_{n+1, j}^{1 / 2}$$
The local index of $(H_X,F)$ is $\partial([\mathcal{F}(t)]) \in K_{0}\left(C_{L}^{*}(X)^{\Gamma}\right)$. 

\begin{theorem}\cite{yu1997localization}
If a discrete group $\Gamma$ acts properly on a locally compact space $X$, then the local index map is an isomorphism from the K-homology group $K_*\Gamma(X)$ to the $K$-group of the localization algebra $K_*(C_L^*(X)^{\Gamma}).$
\end{theorem}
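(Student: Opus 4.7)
The plan is to verify that $K_*(C_L^*(-)^\Gamma)$ satisfies the same Eilenberg--Steenrod-type axioms as equivariant K-homology $K_*^\Gamma(-)$, to check that the local index map is a natural transformation between them, and then to invoke uniqueness of equivariant generalized homology theories after confirming agreement on a single orbit. Concretely, I would establish three ingredients for the localization algebra K-theory: strong Lipschitz homotopy invariance, a Mayer--Vietoris six-term exact sequence, and the value on an orbit $\Gamma/H$; I would then induct on the skeleta of a finite-dimensional cocompact $\Gamma$-simplicial structure, and pass to general proper locally compact $X$ by a colimit argument.

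The heart of the proof is the analytic content of the three ingredients. For Lipschitz homotopy invariance, a proper $\Gamma$-equivariant Lipschitz homotopy $H: X \times [0,1] \to Y$ between $f_0$ and $f_1$ induces the same map on $K_*(C_L^*(-)^\Gamma)$, because any representing family $\{g(t)\}$ has propagation tending to $0$ and hence the pulled-back families along $f_0$ and $f_1$ differ by a perturbation whose norm vanishes as $t \to \infty$, giving an explicit homotopy of idempotents/unitaries. For Mayer--Vietoris, given a $\Gamma$-invariant open cover $X = U \cup V$ with $\Gamma$-invariant partition of unity $\{\phi_U^2, \phi_V^2\}$, one exploits the fact that for $t$ larger than the Lebesgue number one may decompose $g(t) \equiv \phi_U g(t) \phi_U + \phi_V g(t) \phi_V$ modulo a term whose norm is controlled by the propagation; this yields a pullback diagram of localization algebras, hence a six-term exact sequence in K-theory that matches the classical one in equivariant K-homology. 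For the base case, when $X = \Gamma/H$ an explicit identification shows the evaluation map $\mathrm{ev}: C_L^*(\Gamma/H)^\Gamma \to C^*(\Gamma/H)^\Gamma \simeq C_r^*(H) \otimes \mathcal{K}$ is a K-theory equivalence, matching $K_*^\Gamma(\Gamma/H) \cong K_*(C_r^*(H))$; on this orbit one checks directly that the local index map is the identity under these identifications.

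Compatibility of the local index map with pullbacks along Lipschitz maps and with the Mayer--Vietoris connecting homomorphism follows from naturality of the cut-off construction $\mathcal{F}(t) = \sum \phi_{n,j}^{1/2} \mathcal{F} \phi_{n,j}^{1/2}$. With the three axioms and naturality in hand, an induction on cells of a finite-dimensional cocompact $\Gamma$-simplicial complex (using the pushout $\Gamma/H \times S^{k-1} \hookrightarrow \Gamma/H \times D^k$ and the five lemma) propagates the isomorphism, and a continuity argument $C_L^*(X)^\Gamma = \varinjlim C_L^*(X_\alpha)^\Gamma$ over cocompact $\Gamma$-invariant subspaces extends it to arbitrary proper $X$. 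The main obstacle I anticipate is the Mayer--Vietoris cutting step: one must carefully verify that the error terms $g(t) - \phi_U g(t) \phi_U - \phi_V g(t) \phi_V$ lie in an ideal whose K-theory contribution is trivial while preserving the uniform norm-continuity in $t$ required by the definition of $C_L^*$; this hinges on the key feature distinguishing $C_L^*$ from $C^*$, namely propagation $\to 0$, and is the technical reason why the analogous statement fails at the level of Roe algebras.
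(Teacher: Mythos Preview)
The paper does not give its own proof of this theorem: it is stated in the preliminaries as a background result cited from Yu's 1997 paper on localization algebras, with no argument supplied. There is therefore no in-paper proof to compare against.

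That said, your outline is essentially the standard strategy Yu uses: show that $K_*(C_L^*(-)^\Gamma)$ is a generalized homology theory (homotopy invariance via the vanishing-propagation condition, a Mayer--Vietoris sequence from asymptotic cutting by a partition of unity, and the computation on orbits), verify the local index map is a natural transformation, and then run a five-lemma induction over skeleta followed by a direct-limit argument. Your identification of the Mayer--Vietoris cutting step as the delicate point is accurate; the fact that propagation tends to zero is exactly what makes the error terms asymptotically negligible and distinguishes $C_L^*$ from the Roe algebra. So the proposal is correct in spirit and matches the literature proof, but be aware that for the purposes of this paper no proof is expected---the result is simply quoted.
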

Let $\Gamma=\pi_{1}(M)$ be a higher index class in $K_n(C_r^*(\Gamma))$ is called the higher signature of M.

\subsection{Signature on the Hilbert-Poincare Complex}\label{hilbertpc}
In 2005, Higson and Roe \cite{HR1} introduce analytic surgery exact sequence. This section is basically from a serious article \cite{HR1} \cite{HR2} \cite{HR3}. Let us introduce the Hilbert-Poincare complex over $C*$-algebra $A$ in order to build the higher signature on any geometry controlled Poincare complex.
\begin{definition} \cite{HR1} \label{hpc}
A n-dimensional Hilbert-Poincare complex $(E,b,T)$ over $C^*-$ algebra $C$ is a complex of finitely generated Hilbert $C$-module with adjointable operator $T:E_{p}\to E_{n-p}$:
$$ E_0\mathop\leftarrow\limits^{b_1}E_{1}\mathop\leftarrow\limits^{b_2}...\mathop\leftarrow\limits^{b_{n-1}} E_{n-1}\mathop\leftarrow\limits^{b_n} E_{n}$$
such that:
\begin{enumerate}
    \item  if $v\in E_p$ ,then $T^*v=(-1)^{(n-p)p}Tv$;
	\item if $v\in E_p$, then $Tb^*v+(-1)^pbTv=0$; 
	\item $T$ introduce the isomorphism of homology of dual complex(quasi-isomorphism here).
\end{enumerate}
$$ ...\mathop\leftarrow\limits^{b^*}E_{n-p}\mathop\leftarrow\limits^{b^*} E_{n-p-1}\mathop\leftarrow\limits^{b^*}...\mathop\leftarrow\limits^{b^*} E_{p+1}\mathop\leftarrow\limits^{b^*} E_{p}\mathop\leftarrow\limits^{b^*} ...$$
\end{definition}
 If $(E,b,T)$ is the $n$-dimensional Hilbert-Poincare complex.  We can define the self adjoint bounded operator by $Sv=i^{p(p-1)+l}Tv$. Let $B=b+b^*$. Then the self adjoint operator $B\pm S$ is invertible . 
\begin{definition} \cite{HR1} \label{signature}
The signature of Hilbert-Poincare complex $(E,b,T)$ is :
\begin{itemize}
\item For odd-dimensional Hilbert-Poincare complex $(E,b,T)$,  the signature in $K_1(C)$ is defined by the invertible operator $$(B+S)(B-S)^{-1} : E_{ev}\to E_{ev},$$ where $E_{e v}=\oplus_{p} E_{2 p}$
\item For even-dimensional Hilbert-Poincare complex $(E,b,T)$, the signature is defined by the positive projection $[P_{-}]-[P_{+}]$ of $B+S$ and $B-S$.
\end{itemize}
\end{definition}
Let $(E, b, T )$ and $(E', b', T')$ be a pair of n-dimensional Hilbert–Poincare complexes. A homotopy equivalence with other is a chain map $A: (E,b)\to(E',b')$ which induces an isomorphism on homology, and for the two chain maps:
$$ATA^{*}, T^{\prime}:\left(E_{n-*}^{\prime}, b^{\prime *}\right) \rightarrow\left(E_{*}^{\prime}, b^{\prime}\right)$$
induce the same map on homology.

Then signatures in $K_n(C)$ of two homotopy equivalent n-dimensional Hilbert–Poincare complexes are equal.
The signature of Hilbert Poincare complex still has the bordism property .

\begin{theorem}\cite[Theorem 7.6]{HR1}
If $(E,b,T,P)$ is an $(n+1)$-dimensional algebraic Hilbert– Poincare  pair then the K-theoretic signature of its boundary $(P_E,P_b,T_0)$ is zero.
\end{theorem}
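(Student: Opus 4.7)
The plan is to prove this vanishing theorem by producing an explicit null-homotopy of the boundary signature class in $K_n(C)$, built from the Lagrangian projection $P$ that is part of the Hilbert--Poincar\'e pair data. After unpacking the pair structure (the projection $P$ on $E$ is compatible with $b$ and satisfies a Lagrangian condition with respect to $T$, so that $(P_E, P_b, T_0)$ is an $n$-dimensional Hilbert--Poincar\'e complex in the sense of Definition \ref{hpc}), the goal is to exhibit the boundary signature operator $(B_0+S_0)(B_0-S_0)^{-1}$ (odd case) or the K-theoretic difference $[P_-]-[P_+]$ (even case) as homotopic, in the relevant $K$-group, to a manifestly trivial element.

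The core of the construction is a one-parameter family of adjointable operators $A_t$ ($t\in[0,1]$) on the full module $E$ such that each $A_t$ is invertible in the odd case, or is self-adjoint with a spectral gap at $0$ in the even case; $A_1$ recovers the boundary signature invariant under the decomposition $E = P_E \oplus (1-P)E$, contributing a trivial $K$-class on the complementary summand; and $A_0$ is manifestly trivial in $K_*(C)$, for instance isotopic to the identity (odd) or to a constant trivial projection (even). A natural ansatz is
$$A_t = B + \cos(\theta(t))\, S + \sin(\theta(t))\, J S J^{-1},$$
where $B=b+b^*$, $S$ is the operator associated to $T$ as in Definition \ref{signature}, and $J = J(P,T)$ is an isometry built from $P$ and $T$ that exchanges the Lagrangian subcomplex $P_E$ with its complement. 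As $\theta(t)$ sweeps from its boundary-extracting value to a value where the rotated operator extends trivially over $E$, the family interpolates between the ``boundary'' signature at $t=1$ and an ``interior'' operator at $t=0$ whose invertibility comes for free from the Poincar\'e property of $E$.

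The main obstacle is maintaining invertibility (or the spectral gap) of $A_t$ throughout the homotopy. Invertibility at the endpoints follows from the Poincar\'e hypothesis on $(E,b,T)$ and on $(P_E,P_b,T_0)$ via the argument preceding Definition \ref{signature}, but propagating this through the interior of the family requires careful use of the Lagrangian identity (that $P T P^*$ vanishes, modulo the graded signs) together with the anti-commutation $Tb^*+(-1)^p bT=0$. A secondary technical point is verifying axiom (3) of Definition \ref{hpc} for the boundary complex, which I would deduce via a five-lemma argument applied to the short exact sequence $0\to (1-P)E \to E \to P_E \to 0$ combined with the Poincar\'e property of $E$ and the Lagrangian condition on $P$. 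Once the family $A_t$ is in place, the $K$-theoretic conclusion is immediate, since continuity of the $K_0$ or $K_1$ class along the path gives
$$\operatorname{sign}(P_E,P_b,T_0) = [A_1] = [A_0] = 0.$$
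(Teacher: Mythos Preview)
The paper does not supply its own proof of this statement: it is quoted verbatim as \cite[Theorem 7.6]{HR1} and used as a black box. So there is no in-paper argument to compare against; the relevant comparison is with the original Higson--Roe proof.

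Your outline has a real gap at the point where you invoke a ``Lagrangian identity (that $PTP^*$ vanishes, modulo the graded signs)''. That identity is \emph{not} part of Definition~\ref{ppair}. The pair axioms only require that $P$ determine a subcomplex, that $T_0=Tb^*+(-1)^p bT$ land in $PE$, and that $P^\perp T$ be a quasi-isomorphism onto the complementary complex. Nothing forces $PTP=0$ (or its graded analogue), and in concrete geometric examples---where $E$ comes from a manifold with boundary and $P$ is restriction to a collar---it is typically false. Your rotation ansatz $A_t=B+\cos\theta\,S+\sin\theta\,JSJ^{-1}$ depends on precisely this missing relation to control the cross terms and preserve invertibility, so as written the homotopy cannot be carried through.

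What actually happens in \cite{HR1} is closer in spirit but organized differently. One first proves a lemma (their Proposition~7.5): if an $n$-dimensional Hilbert--Poincar\'e complex admits a complemented subcomplex $L$ that is ``Lagrangian'' in the operator-homotopy sense (the duality $T$ is operator-homotopic to one mapping $L^\perp$ isomorphically to $L$), then its signature vanishes. The vanishing comes from an explicit path of invertibles/self-adjoints built from the off-diagonal structure of $T$ relative to $L\oplus L^\perp$---this is where your rotation idea is morally correct, but it needs the Lagrangian $L$ inside the \emph{boundary} complex, not the projection $P$ on the pair. The second step is to manufacture such an $L$ for $(PE,Pb,T_0)$ out of the pair data: roughly, $L$ is the image in $PE$ of the interior cycles, and the quasi-isomorphism axiom for $P^\perp T$ is exactly what makes $L$ Lagrangian. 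Your five-lemma remark is pointing at this step, but you have attached it to the wrong conclusion (you use it to verify that the boundary is a Hilbert--Poincar\'e complex, rather than to produce the Lagrangian that kills the signature).
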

In the third article \cite{HR3}, Higson and Roe use this framework to connect the algebraic surgery exact sequence and exact sequence of K-theory of $C^*$-algebra extension.

There are two important geometrical realization of Hilbert Poincare complex . 

First is Hodge–de Rham complex of a complete Riemannian manifold $X$. Let $b$ is operator adjoint of differential $d$. After $L^2$-completions :  $$\Omega_{L^{2}}^{0}(X) \stackrel{b}\leftarrow{\cdots} \cdots \stackrel{b}\leftarrow{\Omega}_{L^{2}}^{n-1}(X) \stackrel{b}\leftarrow{\Omega}_{L^{2}}^{n}(X)$$
For the complete Riemannian manifold, the two minimal domain and maximal domain are the same. Hodge operator $T: \Omega_{L^{2}}^{p}(X) \rightarrow \Omega_{L^{2}}^{n-p}(X)$ by $\langle T \alpha, \beta\rangle=\int_{M} \alpha \wedge \bar{\beta}$ provide the $(\Omega_{L^{2}}^{i}(X), b^*)$ is a Hilbert Poincare complex.

Another is $\left(C_{*}^{\ell^{2}}(X), b^{*}\right)$ on boundary geometry simplicial complex. Here $C_{*}^{\ell^{2}}(X)$ is $\ell^{2}$ cochain.  Moreover, the two Hilbert Poincare complex  $\left(\Omega_{L^{2}}^{*}(X), d\right)$ and $\left(C_{*}^{\ell^{2}}(X), b^{*}\right)$ are homotopy equivalent in the sense of Hilbert Poincare complex. Furthermore, the signature of $\left(\Omega_{L^{2}}^{*}(X), d\right)$ or $\left(C_{*}^{\ell^{2}}(X), b^{*}\right)$ defined here is equal  with classical signature of $X$. We will prove the similar result on self dual non Witt space with conical singularity. 
 
In this thesis, we consider the Hilbert space in Hilbert Poincare complex are the analytically controlled X-modules. A linear map between two analytically controlled X-modules $T:H_1\to H_2$ is said to be analytically controlled if $T$ is the norm limit of locally compact and finite propagation bounded operators . Hodge de Rham complex on a complete Riemannian manifold and $(\ell^{2},b)$ chain complex on boundary geometry simplicial complex Analytically Controlled Hilbert Poincare complex. For analytically controlled Poincare complex over $X$, the signature index is in  $K_*(C^*(X))$. 
\subsection{Geometric Control} \label{geocon}
There is a natural way from the geometrically controlled category to the  analytically controlled category. In this section, we consider the $X$ is a connected simplicial complex with a path metrics $d$.

\begin{definition}\cite{HR2}
$X$ is bounded geometry if exist a number N such that each of the vertices of $X$ lies in at most $N$ different simplices of $X$.
\end{definition}
Next we can introduce geometrically controlled $X$-modules and geometrically controlled linear map over simplicial complex.  
\begin{definition}\cite{HR2}
Let X be a proper metric space. A complex vector space V is geometrically controlled over X if it is provided with a basis $B \subset V$  and a function $c:B \to X$ with the following property: for every $R > 0$ there is an $N<\infty$ such that if $S \subset X$ has diameter less than $R$ then $c^{-1}[S]$ has cardinality less than $N$. The function $c$ is the control map for $V$. A geometrically control linear map $T:V\to W$ is :
\begin{enumerate}
    \item $V$ and $W$ geometrically controlled,
    \item the matrix coefficients of $T$ is uniformly bounded,
    \item the propagation of $T$ is finite.
\end{enumerate}
\end{definition}
The geometrically controlled Hilbert Poincare complex $(E,b,T)$ is a complex of n-dimensional geometrically controlled X-modules $E_i$ together with geometrically controlled linear maps $T:E_{i}\to E_{n-i}$ and $b$ such that:
\begin{enumerate}
    \item  if $v\in E_i$ ,then $T^*v=(-1)^{(n-i)i}Tv$;
	\item if $v\in E_i$, then $Tb^*v+(-1)^ibTv=0$; 
	\item $T$ introduce the isomorphism of homology of dual complex.
\end{enumerate}

One important example of geometrically controlled Poincare complex is the simplicial complex of a closed smooth manifold.  Let $C^{l_2}_*(X)$ to be the Hilbert space
of square integrable simplicial $*$-chains on $X$. Then the chain complex:
 $$C_{0}^{\ell^{2}}(X) \stackrel{b^{*}}{\longrightarrow} \cdots \stackrel{b^{*}}{\longrightarrow} C_{n}^{\ell^{2}}(X)$$
 is a Hilbert complex. Moreover, $(C_{*}^{\ell^{2}}(X),b^*)$ is still a Hilbert complex.
 
 Suppose $f\in C_0(X)$ and $c=\sum k_{\sigma}[\sigma]$ is an $p-$ chain. Let $c_{\sigma}$ be the barycenter of $\sigma$.  It is natural to define $C^{l_2}_*(X)$  as an X-module:
$$f\cdot c= \sum f(c_\sigma)k_{\sigma}[\sigma].$$
\begin{definition} {[ Definition 3.13 in \cite{HR2}]}
Let $X$ be a bounded geometry simplicial complex.  X is a geometrically controlled Poincare complex of dimension n if it is provided with an n-dimensional fundamental cycle $[X]$ for which the associated duality chain map $\mathrm{P}$ is a chain equivalence in the geometrically controlled category.
\end{definition}

In \cite{HR2} section 4 Higson and Roe prove the proposition below: 
\begin{prop} \label{geoana}
Let $X$ be an oriented bounded geometry combinatorial manifold with triangulation $T$.  Then the Poincare dual map with fundamental class $[X]$ gives $X$ the structure of a geometrically controlled Poincare complex.
\end{prop}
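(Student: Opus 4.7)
The plan is to realize the Poincare duality chain map $\mathbb{P}=-\cap[X]$ via the dual cell decomposition coming from the first barycentric subdivision of $T$, and then to verify by hand that $\mathbb{P}$, a candidate inverse, and the homotopies witnessing chain equivalence all have uniformly bounded matrix coefficients and uniformly bounded propagation.

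First I would set up the geometrically controlled structure on $C^{\ell^{2}}_{*}(X)$. The natural basis is the set of oriented simplices of $T$, with control map sending each simplex to its barycenter. Bounded geometry of $X$ gives exactly the uniform local finiteness required in the definition of a geometrically controlled $X$-module, because a ball of radius $R$ can meet only uniformly boundedly many barycenters. The simplicial boundary has matrix entries in $\{0,\pm 1\}$ and propagation bounded by the maximal diameter of a simplex, so it is geometrically controlled; the same is true of its adjoint, which represents the $\ell^{2}$ coboundary.

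Next I would realize $\mathbb{P}$ explicitly. For each $p$-simplex $\sigma$, let $D\sigma$ denote its dual $(n-p)$-cell, written as a signed sum of top simplices in the barycentric subdivision $\operatorname{sd}T$. The classical verification of Poincare duality for a combinatorial manifold shows that the cap product with $[X]$ factors through the standard chain equivalence $C_{*}^{\operatorname{sd}T}(X)\to C_{*}(X)$ as the map $\sigma^{*}\mapsto \pm D\sigma$. Two uniform bounds control this: the number of top simplices making up any $D\sigma$ is uniformly bounded by bounded geometry, and each such simplex lies within a uniformly bounded distance of the barycenter of $\sigma$. Hence the matrix entries of $\mathbb{P}$ are bounded and its propagation is finite, so $\mathbb{P}$ is geometrically controlled.

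The main obstacle is producing a geometrically controlled homotopy inverse. The classical argument via acyclic models only guarantees the abstract existence of an inverse and homotopies, with no control on their supports, so it must be upgraded. My plan is to refine this by carrying out the inductive construction star by star: define a candidate inverse $\mathbb{Q}:C_{n-*}(X)\to C^{*}(X)$ by the analogous recipe on dual cells, and then build chain homotopies between $\mathbb{Q}\mathbb{P}$, $\mathbb{P}\mathbb{Q}$ and the identities by induction on the skeleton, at each step choosing the image of a basis element $e$ to be supported in the star of $e$ in a fixed iterated subdivision. Each local choice is available because stars in a combinatorial manifold are contractible and the relevant local chain complexes are acyclic, while bounded geometry ensures these stars have uniformly bounded combinatorial size. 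This yields the desired bounds on propagation and matrix norms for the inverse and homotopies, promoting $\mathbb{P}$ to a chain equivalence in the geometrically controlled category and completing the proof.
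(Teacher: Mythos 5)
This proposition is not proved in the paper at all: it is quoted verbatim from Higson--Roe and the paper refers the reader to \cite{HR2}, Section~4, for the proof. What the paper \emph{does} supply is the framework that proof lives in, namely the category of geometric $(\mathbb{C},X)$-modules of Section~2.5, Roe's local--global principle (Theorem~\ref{landg}: a map of finite geometric-module chain complexes is a chain equivalence iff its diagonal part is), and the observation that the diagonal part of $-\cap[X]$ at a simplex $\sigma$ is precisely the $k$-fold suspension isomorphism $H_{r}(X, X \ominus \hat{\sigma}) \to H_{r-k}(D(\sigma),\partial D(\sigma))$. Your proposal is a hands-on reconstruction of that argument rather than an invocation of it, which is a legitimate alternative route; but as written it has a gap in the one place where the hands-on route requires care.

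The gap is in the sentence justifying the star-by-star induction: ``each local choice is available because stars in a combinatorial manifold are contractible and the relevant local chain complexes are acyclic.'' Contractibility of stars is the wrong local input on the cochain side. When you try to build the homotopy $\delta H + H\delta = \mathbb{Q}\mathbb{P} - 1$ on $C^{*}(X)$ inductively with controlled supports, the obstruction at a simplex $\sigma$ lies in the cohomology of the cochains supported in a bounded neighbourhood of $\sigma$, i.e.\ essentially compactly supported cohomology of the open star. That group is not zero: it is concentrated in a single degree, and the reason it is concentrated in a single degree --- and that the cap product hits the generator --- is local Poincar\'e duality for the combinatorial manifold, equivalently the suspension isomorphism onto $H_{*}(D(\sigma),\partial D(\sigma))$. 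This is exactly the ``diagonal part is a chain equivalence'' check that the geometric-module formalism isolates, and it is where manifold-ness (as opposed to mere bounded geometry of a simplicial complex) actually enters. Your phrasing hides this input behind ``acyclicity,'' which is what would make the same argument spuriously appear to work on a pseudomanifold with singularities, where the proposition fails. To make the hand-made induction honest you should (a) state precisely which local complexes you are asking to be acyclic, distinguishing the chain side (where contractibility of the star does suffice) from the cochain side (where you need local duality), and (b) record that the diagonal components of $\mathbb{P}$ are quasi-isomorphisms because $X$ is a combinatorial manifold; at that point you will essentially have re-proved the local--global principle in this special case, and citing Theorem~\ref{landg} directly, as the paper does, is shorter and less error-prone.
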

We will prove a similiar result for bounded geometry combinatorial Non Witt space with triangulation $T$. 
For any geometrically controlled Poincare complex with $\mathrm{P}$, define  $T=\frac{1}{2}\left(\mathrm{P}^{*}+(-1)^{p(n-p)} \mathrm{P}\right)$.   

\begin{prop}\cite[Proposition 4.1]{HR2} 
After $\ell_2$ completion, every geometrically controlled Poincare complex defines an analytically controlled Poincare complex .
\end{prop}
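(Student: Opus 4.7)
The plan is to pass from the geometrically controlled to the analytically controlled category in three steps: (i) $\ell^2$-complete each chain module to a Hilbert $X$-module; (ii) extend each geometrically controlled operator to a bounded operator and verify analytic control; (iii) check that the Hilbert--Poincare axioms of Definition \ref{hpc} survive completion. For step (i), I form the $\ell^2$-completion $\ell^2(B)$ of the geometrically controlled module $V$ with basis $B$, and let $C_0(X)$ act diagonally via $f \cdot b = f(c(b))\, b$; nondegeneracy and separability of this representation follow from the uniform cardinality bound on preimages of bounded sets that is built into the definition of geometric control.

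For step (ii), suppose $A : V \to W$ is a geometrically controlled linear map with matrix entries bounded by $M$ and propagation at most $R$. Geometric control supplies a uniform $N$ such that any $R$-ball in $X$ receives at most $N$ basis vectors of either module, so each row and each column of the matrix of $A$ has at most $N$ nonzero entries of modulus $\leq M$. Schur's test then yields a bounded extension $\overline A : \ell^2(B_V) \to \ell^2(B_W)$ of norm $\leq MN$, and the propagation bound $R$ is preserved by construction. To see local compactness, fix $f \in C_0(X)$ with compact support $K$: the operator $f \overline A$ has range inside the finite-dimensional span of those basis vectors of $W$ whose control image lies in $K$, so $f \overline A$ is of finite rank, and an entirely symmetric argument on the source side shows the same for $\overline A f$. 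A sup-norm approximation of a general $f \in C_0(X)$ by compactly supported functions then gives compactness of $f \overline A$ and $\overline A f$, so $\overline A$ is itself a locally compact finite-propagation operator and is trivially analytically controlled.

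Finally, for step (iii), the algebraic identities $T^{\ast} v = (-1)^{(n-p)p} T v$ and $T b^{\ast} v + (-1)^p b T v = 0$ from Definition \ref{hpc} hold on the dense subspace spanned by the geometrically controlled bases, and both sides are continuous in the $\ell^2$ norm, so they extend to the completion. For the quasi-isomorphism axiom, a geometrically controlled Poincare complex comes equipped with a geometrically controlled chain homotopy inverse to $\mathrm{P}$ together with geometrically controlled chain homotopies; step (ii) extends all of these to bounded operators, producing a chain homotopy equivalence of the completed Hilbert complexes and hence an isomorphism on homology. The only step with genuine content is the Schur-type estimate in (ii): it relies in an essential way on the interaction of the uniform matrix bound with the uniform cardinality bound, and this is precisely where the bounded-geometry hypothesis on $X$ is used. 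The remaining verifications are routine passages from the dense subspace to its $\ell^2$-completion.
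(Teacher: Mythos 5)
Your proof is correct and follows essentially the same route as Higson and Roe's argument in [HR2] (which this paper merely cites, without reproducing a proof): form the $\ell^2$-completion with the diagonal $C_0(X)$-action, use the interplay between the uniform matrix bound and the uniform cardinality bound from geometric control to show via a Schur-type estimate that geometrically controlled maps extend to bounded operators with the same finite propagation, observe that local compactness follows from the finite-rank of $f\overline A$ and $\overline A f$ for compactly supported $f$, and then extend the geometrically controlled chain homotopy data witnessing that $\mathrm P$ is a controlled chain equivalence to obtain a chain homotopy equivalence of Hilbert complexes. One point worth making explicit: the paper's setup passes from the duality map $\mathrm{P}$ (a geometrically controlled chain equivalence) to the symmetrized operator $T = \tfrac{1}{2}(\mathrm{P}^* + (-1)^{p(n-p)}\mathrm{P})$ in order to meet the self-adjointness axiom of Definition \ref{hpc}; your step (iii) should note that $T$ is chain homotopic to $\mathrm P$ (via the geometrically controlled homotopy between $\mathrm P^*$ and $(-1)^{p(n-p)}\mathrm P$), so the quasi-isomorphism property of $\mathrm P$ transfers to $T$. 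Apart from that small elision, the argument is complete.
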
\label{geohp}
Bordism invariance of higher signature. First we introduce the Poincare pair of Hilbert Poincare complex. A complemented subcomplex $(PE,Pb)$ of the geometrically controlled $(E,b)$ is a family of complemented geometrically controlled submodules complex which $Pb$ maps $PE_i$ to $PE_{i-1}$ for all $p$. Then we can define complement complex $(P^\perp E,P^\perp b)$. 
\begin{definition}{ \cite{HR1}}\label{ppair}
An $(n+1)$-dimensional algebraic Hilbert–Poincare pair is a complex of finitely generated Hilbert modules together with a family of bounded adjointable operators $T: E_p \to E_{n+1-p}$ and a family of orthogonal projections $P:E_p \to E_p$ such that
\begin{enumerate}
    \item the orthogonal projections P determines a subcomplex of $(E,b)$;
    \item the range of the operator $T_0=T b^{*}+(-1)^{p} b T$ is contained within the range of $P$;
    \item $P^\perp T$ induces an isomorphism from the homology of the complex $(E,b^*)$ to the homology of the complex $(P^\perp E,P^\perp b)$;
    \item $T^{*}=(-1)^{(n+1-p) p} T: E_{p} \rightarrow E_{n+1-p}$.
\end{enumerate}
\end{definition}

The geometrically controlled Poincare complex $\left(P E, P b, T_{0}\right)$ is defined as the boundary of the geometrically controlled Poincare pair $(E,b,T,P).$ 
Then Higon and Roe in \cite{HR1} theorem 7.6 prove the signature  of algebraic Hilbert Poincare pair in the meaning of \ref{signature} is bordism invariance. In other words
\begin{equation}\label{bordism1}
    \text{signature of } (PE,Pb,T_0)=0
\end{equation}

\subsection{Poincare Duality in Geometric Modules} \label{geom}
With help of geometric algebra, it is easy to prove Poincare dual map is chain equivalence in the category of geometrically control.

In the following sections, we assume the $X$ is a combinatorial manifold simplicial complex.  $X'$ is the first barycentric subdivision. For the simplex $\sigma$ . the Dual cell of $\sigma $ is notated as $D(\sigma)$. The star of $\sigma$ is $st(\sigma)$,the link of $\sigma$ is $lk(\sigma)$ .

In order to explore the further structure over geometrically controlled Poincare complex of $X$, Roe in \cite{R} introduce geometric $\mathbf{R}-X$ module. Geometric module play an important role in the controlled topology. Later we will see the filtered chain complex $W^{p}_*(X)$ as the chain complex of geometric module. 

\begin{definition} \cite{R}\label{geomodule}
The geometric $\mathbf{R}$- module M over simplicial complex $X$ is a list of $M_\sigma$ of $\mathbf{R}$- module parameterized by the faces $\sigma$ of $X$. The geometric morphism $\phi: M\to N$ is a list of $\phi_{\tau,\sigma}$ $\mathbf{R}$-linear map from $M_\sigma$ to $N_\tau$. If $\tau$ is not the face of $\sigma$, we have $\phi_{\tau,\sigma}=0$.
\end{definition}
The diagonal part $\hat{\phi}$ of geometrical morphism $\phi$ is defined as
\[
  \hat{\phi}_{\sigma,\tau}=\begin{cases}
               \phi_{\sigma,\tau} \quad  if  \ \sigma= \tau \\
               0 \ others \\               
            \end{cases}
\]
In fact we only need to consider the diagonal part of geometric morphism due to the global-local principle.
\begin{theorem}\cite{R}\label{landg}
A chain map between finite chain complex of geometric $(R,X)$-modules is a chain equivalence if and only if the induced map on the diagonal part is chain equivalence.  
\end{theorem}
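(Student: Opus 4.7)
The plan is to exploit the skeletal filtration that the face-order structure of geometric morphisms automatically induces on any chain complex of geometric $(R,X)$-modules. For a geometric module $M = \{M_\sigma\}$, set
$$M_{\leq k} := \bigoplus_{\dim \sigma \leq k} M_\sigma.$$
Since every geometric morphism $\phi$, and in particular the differential $b$, satisfies $\phi_{\tau,\sigma} = 0$ unless $\tau \leq \sigma$ (so in particular $\dim\tau \leq \dim\sigma$), each $M_{\leq k}$ is a subcomplex. The associated graded $\mathrm{gr}_k M := M_{\leq k}/M_{\leq k-1} = \bigoplus_{\dim\sigma = k} M_\sigma$ carries the induced differential, which is exactly the diagonal part $\hat b$; and any chain map $f$ of geometric modules induces $\mathrm{gr}(f) = \hat f$.

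The forward direction is essentially immediate from this formalism. If $f:M\to N$ is a chain equivalence \emph{in the geometric category}, then there exist a geometric quasi-inverse $g$ and geometric chain homotopies $h_M, h_N$ witnessing $gf\simeq\mathrm{id}$ and $fg\simeq \mathrm{id}$. All of these maps, being geometric, preserve the skeletal filtration, and passing to the associated graded shows that $\hat g$ is a two-sided chain homotopy inverse of $\hat f$ with homotopies $\hat h_M, \hat h_N$. Hence $\hat f$ is a chain equivalence.

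For the converse (the more substantive direction), I induct on the skeletal filtration, which terminates because the complex is finite. The base case $k=0$ is the diagonal hypothesis on the vertex-supported pieces. Assume $f|_{M_{\leq k-1}}$ has been shown to be a chain equivalence in the geometric category with explicit geometric inverse $g^{(k-1)}$ and homotopies. The short exact sequence of complexes of geometric modules
$$0 \to M_{\leq k-1} \to M_{\leq k} \to \mathrm{gr}_k M \to 0$$
is compatible with $f$, and on the quotient $f$ induces the degree-$k$ piece of $\hat f$, which by assumption is a chain equivalence. Lift a chosen geometric inverse of $\hat f|_{\mathrm{gr}_k}$ and its homotopies through this short exact sequence to produce geometric candidates for $g^{(k)}$ and the associated homotopies on $M_{\leq k}$; then correct the resulting errors, which lie in $M_{\leq k-1}$, using the inductively known inverse $g^{(k-1)}$ and a standard homological algebra gluing argument (five-lemma / mapping cone). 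Since each lift and correction can be arranged within the category of geometric modules (using that $\mathrm{gr}_k M$ is a direct summand on the nose in the category of graded $R$-modules and that the face-order obstructions vanish because the morphisms we add are supported only on pairs $\tau \leq \sigma$), the resulting $g^{(k)}$ and homotopies remain geometric.

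The main obstacle is verifying this last point carefully: that the inverse $g$ and the homotopies one builds stratum-by-stratum truly belong to the geometric category, i.e.\ that their matrix components $g_{\tau,\sigma}$ vanish unless $\tau \leq \sigma$. This is where one must keep track of propagation: the inductive lifts only introduce new matrix entries $g_{\tau,\sigma}$ with $\dim\tau < \dim\sigma$ and $\tau$ a face of $\sigma$, so the face-order restriction is preserved by construction. Once this is verified in one inductive step, iterating through the finite list of skeletal dimensions produces a geometric two-sided chain homotopy inverse of $f$, completing the proof.
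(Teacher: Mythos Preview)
The paper does not supply its own proof of this statement; it is quoted from Roe \cite{R} and used as a black box. So there is no in-paper argument to compare against, and the relevant question is simply whether your sketch is sound and whether it matches the standard proof.

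Your approach is the standard one and is correct in outline. The skeletal filtration $M_{\leq k}$ is exactly the filtration that makes the face-order condition on geometric morphisms manifest, and passing to the associated graded recovers the diagonal part; the forward direction is then immediate, as you wrote. For the converse, the inductive ``lift and correct'' scheme works, but the cleanest way to tighten the step you flag as the main obstacle is to pass to the mapping cone and reduce to the statement that a finite geometric complex $C$ is contractible if its diagonal $\hat C$ is. Given a diagonal contraction $\hat h$ with $\hat b\hat h+\hat h\hat b=1$, set $h_0=\hat h$ and compute $bh_0+h_0b=1+e$, where $e=b'\hat h+\hat h b'$ is strictly upper-triangular in the face order and hence nilpotent on a finite complex. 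One checks directly that $e$ commutes with $b$, so $(1+e)^{-1}=\sum_{j\geq 0}(-e)^j$ is a finite geometric sum and a chain map; then $h=h_0(1+e)^{-1}$ is the desired geometric contraction. This is the same content as your induction, but it makes the ``geometricity'' of the constructed inverse and homotopies transparent without a separate bookkeeping argument, and it is the form in which the result usually appears in the controlled-topology literature.
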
 

Next we consider the $\mathbf{R}$ module is the $\mathbb{C}$ vector space.
The first key example of geometric module is that we see $i$-cochain complex $C^i(X)$ with coefficient $\mathbb{C}$ as the geometric module. This is by giving each $i$-simplex $\sigma$ a free generator in dimension i and zero boundary maps. Another is the geometric $(\mathbb{C}, X)$- module over $X'$. Here $C_q(X',R)$ assigns to a simplex $\sigma \in K$ the free $\mathbb{C}$ vector space generated by those $q$-simplices of $X'$ whose root is $\sigma$.

Usually we only need to consider the diagonal part of geometrical module because of Global-Local principle. For the $C^{\bullet}(X',\mathbb{C})$, the diagonal part over $\sigma$ is spanned by all those simplices of $X'$ which have $\sigma$ as their tip.. For the $C_{\bullet}(X',\mathbb{C})$, the diagonal part is the $(D(\sigma),\partial D(\sigma))$.  

For oriented $n$ -dimensional homology manifold, the cap product introduce the chain equivalence in the category of $(\mathbb{C}, X)$ -modules:
$$C^{\bullet}(X',\mathbb{C})\to C_{n-\bullet}(X',\mathbb{C}) $$ 
The key point of geometric Poincare duality is k-fold suspension isomorphism:
$$H_{r}(X, X \ominus \hat{\sigma} ; \mathbb{C}) \rightarrow H_{r-k}(D(\sigma, X), \partial D(\sigma, X)).$$

\subsection{Coefficient System}
Sheaf is a appropriate tools to handle the global and local relation. Especially, Deligne observe that intersection homology is very suitable to approach with sheaf language, see more detail in \cite{maxim2019intersection} and\cite{C1}. Coefficient system is the dual category of constructible sheaf  by Schneider in \cite{SS} and \cite{sss}. It is helpful to define Verdier duality of Building. Coefficient system is also called cellular cosheaf by Curry in \cite{C1}. 

A sheaf is a presheaf with Gluing property.  For detail information of sheaf, see \cite{bredon2012sheaf}. Here we require the module should be complex vector space. 

In a simplicial complex $X$, the interior of $\sigma$ is $\mathring\sigma$, then for every $\sigma$, the open star $\mathring{St}(\sigma)$ of $\sigma$ is open covering, and we have $\sigma<\tau$ then $\mathring{St}(\tau) \subset \mathring{St}(\sigma)$,  then we can construct the constant sheaf $\mathbb{C}_X$ for every open set on simplicial complex by this cover.  For a given sheaf, we can define the sheaf cohomology $\check{\mathrm{H_i}}(X, \mathcal{F})$ via \v{C}ech cohomology. We have $\check{\mathrm{H_i}}(X, \mathbb{C}_X)=H_i(X,\mathbb{C})$. 

Local constant sheaf ${\mathcal{F}}$ is the sheaf for each x in X, there is an open neighborhood U of x such that ${\mathcal{F}}|_{U}$ is a constant sheaf on U. Local constant sheaf are not preserved by the functors $\mathcal{R}f^*$, $\mathcal{R}f!$, $f^*$, $f!$ in general. The smallest closure of constant sheaf is the set of constructible sheaf.  

A sheaf is $\mathbb{S}$-constructible about the stratification $\mathbb{S}$ if it is locally constant on the stratum . It is called cohomologically $\mathbb{S}$-constructible if it is cohomology locally constant for all $i$.  Constructible sheaf on the simplicial complex is also named cellular sheaf. 
\begin{definition} \cite{C1}
The cellular sheaf $\mathcal{V}=(V_F)_F$ is a list of complex vector spaces on $X$,
For each face $F\subset X$, there is a  $\mathbb{C}$-vector spaces $V_F$  , 
and there is the linear maps $r^{F_2}_{F_1} :V_{F_2}\to V_{F_1}$ for each pair of facets $F_2\subset F_1$(restrication map not extension), these maps hold for $r^F_F =id$ and $r^{F_1}_{F_3} =r^{F_2}_{F_3}\circ r^{F_2}_{F_1}$  for any $F_2\subset\overline{F_1}$ and $F_3\subset \overline{F_2}$.
\end{definition}
Let $I$ is the injective resolution. For cellular sheaf, the injective sheaf is very easy to construct. For $f: X\to Y$, the right derived functor $\mathcal{R}f_*$ of $f_*$ is $f_*\circ I$. The advantage of cellular sheaf is the injective sheaf of that is simple. Let $V$ be a vector space,  $${[\sigma]}^V(\tau)=\begin{cases} 
V \quad \tau\leq\sigma\\
0 \quad others\\
\end{cases}
$$
here ${[\sigma]}^V$ is named elementary injective. Every injective cellular sheaf is isomorphic to $\mathop\oplus\limits_{\sigma\in X}{[\sigma]}^{V_\sigma}$.

A building $X$ of $G$ carries a natural $G$-action which is isometric and respects the partition into facets.

With help of coefficient system, it is easy to connect the other research when intersection homology are defined on sheaf. Furthermore, Xie and Higson give a general method to construct $\mathrm{K}-$ homology class on self dual constructible sheaf of any simplicial complex . 

For convenience, we need the coefficient system on the simplicial setting for building K-homology class for equivariant case. When we consider higher signature, it requires to prove the Poincare duality map is chain equivalence under $C_*^r(\Gamma)$ module.  

In general, the map of cosheaf is extension map instead of restriction map. The Poincare-Verdier duality exchange the coefficient system and Constructible sheaves. 

Let $X$ be a simplicial complex and $Sh(\mathbb{C}_X)$ is the abelian category of sheaves of $\mathbb{C}$-vector space $D^b(X)$ is the bounded derived category. Let $D^b(Cons(X))$ be the bounded derived category of constructible sheaves on $X$. Actually, the coefficient system can use to express the dualizing complex $\omega_X$. For any facet $F$ in $X$, the star of facet $St(F)$ is unions of all facet .  The morphism $T: D^b(Cons(X))\to D^b(X)$is equivalence of category.

For any $x\in X$, let $F(x)$ be the unique facet containing $x$, define $st(x)=st(F(x))$. Given any sheaf $S$, then $S_F=S(st(F))$ is the weakly constructible sheaf. There is a equivalent functor from coefficient system to constructible sheaf. 
\begin{definition} \cite{SS} \label{coeff}
The coefficient system $\mathcal{V}=(V_F)_F$ is a list of complex vector spaces on $X$,
For each face $F\subset X$, there is a  $\mathbb{C}$-vector spaces $V_F$  , 
and there is the linear maps $r^{F_1}_{F_2} :V_{F_1}\to V_{F_2}$ for each pair of faces $F_2\subset F_1$, these maps hold for $r^F_F =id$ and $r^{F_1}_{F_3} =r^{F_2}_{F_3}\circ r^{F_2}_{F_1}$  for any $F_2\subset\overline{F_1}$ and $F_3\subset \overline{F_2}$.
\end{definition}

Given a cellular sheaf $\mathcal{F}$ on $X$, It is easy to define the coefficient system $\hat{\mathcal{F}}$ via dual cell, let $\sigma$ is a simplex,  $\bar \sigma$ is the dual simplex about $\sigma$, then define $\hat{\mathcal{F}}(\bar \sigma)=\mathcal{F}(\sigma)$. 
By definition, coefficient system is a geometrical module. 
Let $\mathrm{Coeff}(X)$ be the abelian category of coefficient system in $X$, $D^b(\mathrm{Coeff}(X))$ is the bounded derived category of coefficient system. \\
Given a coefficient system $\mathcal{V}$, the complex of oriented chains is defined as: 
$$C_{c}^{o r}\left(X_{(d)}, \mathcal{V}\right) \stackrel{\partial}{\longrightarrow} \ldots \stackrel{\partial}{\longrightarrow} C_{c}^{o r}\left(X_{(0)}, \mathcal{V}\right)$$
$X_{(q)}$ denotes the set of all $q$-dimensional oriented facet $(F,c)$.  Then we define the homology of coefficient system by the oriented chains. The functor from coefficient system to constructible sheaf is closed related with dualize functor named $\omega(v)$ so we use the same notation.

In \cite{G4}, a oriented cellular pseudomanifold is a convex linear cell complex $K$, purely of some dimension $d$, such that every $d$-1-dimensional cell is a face of exactly two $d$-dimensional cells; together with a choice of orientation of each $d$-dimensional cell such that the induced orientations cancel on every $d-1$-dimensional cell. Then intersection homology chain is coefficient system.

\subsection{Intersection Homology on Pseudomanifold}
 If we consider the space with singularity such as suspension of torus, the intersection of two cycle may be not a cycle. This means the usual intersection product of homology is not well defined in orientated pseudomanifold due to the singularity. Furthermore, the Poincare dual map induced by fundamental class $[X]$ is not isomorphism.
 $$\mathrm{H}^{n-i}(X) \longrightarrow \mathrm{H}_{i}(X)$$
 
 Instead, Goresky and MacPherson introduce the intersection homology $\mathrm{IH}_{i}^{\bar p}$ which is 
$$\mathrm{H}^{n-i}(X) {\longrightarrow} \mathrm{IH}_{i}^{\bar p}(X) {\longrightarrow}\mathrm{H}_{i}(X).$$
to assert the generalized Poincare duality for complementary  perversity.
 The intersection homology has many good properties such as: topological invariant, stratified homotopy equivalent, invariant under normalization , and independent in the stratification. 
 
Furthermore,  if the oriented pseudomanifold satisfies the Witt condition called Witt space then Poincare duality holds for lower middle perversity intersection homology. So we can define the signature on oriented Witt space.

In this article, we consider the question in the piecewise linear (\textbf{PL}) category. I will use the basic knowledge and notation in \cite {G} and \cite{friedman_2020}. In that category, the pseudomanifold is defined as below:

The object in this category are polyhedrons and the morphism are piecewise linear homomorphism. In the PL pseudomanifold, it admits the \textbf{PL}-stratification.

When $X$ is the n-dimensional pseudomanifold, that means locally compact space $X$ and closed singular space of $X_{sig}$ such that: $\mathrm{dim}(X_{sig})<n-1$ and $X-X_{sig}$ is oriented n-manifold which is dense in $X$.  For every pseudomanifold exists a  stratification :
\begin{equation}\label{stra}
X=X_n \supset  X_{n-2} \supset X_{n-3} ... \supset X_1\supset X_0.
\end{equation}
for each point $p\in X_i - X_{i-1}$, there is a filtered space 
$V=V_{n} \supset V_{n-1} \supset \cdots \supset V_{i}=p.$
The neighborhood of $p$ in $X_i-X_{i-1}$ is piecewise linear homomorphically to $V\times B_i.$ If $\chi_i =X_i-X_{i-1}\neq \varnothing$ ,  $\chi_i $ is an open manifold, we call $i$ stratum of the stratification.
satisfying:

Assume $Z$ is the pseudomanifold of dimension $(n-1)$ and a closed subspace of $X$. If $X-(\sum X \cup Z)$ is an n-dimensional oriented manifold which is dense in $X$ and $Z$ is collared in X, we say this n-dimensional pseudomanifold with boundary is a pair of pseudomanifolds $(X, Z)$. The stratification of the pair
$(X, Z)$ satisfies the filtration of $Z$ given by $Z_{j-1}=X_{j}\cap Z$ stratifies $Z$ ,
the filtration of $X-Z$ given by $X_{j}-Z_{j-1}$ stratifies $X-Z$ and the filtration respect the collaring of $Z$ in $X$.
 
 The object in this category are polyhedrons and the morphism are piecewise linear homomorphism. In the \textbf{PL} pseudomanifold, it admits the \textbf{PL}-stratification. In the simplicial viewpoint, pseudomanifold can be defined as below.
 
\begin{definition}\cite{herreman2005h}
A $n-$ dimensional and closed pseudomanifold is a finite simplicial complex with the following characteristic:
\begin{enumerate}
\item it is non-branching: Every (n-1) dimensional simplex is a face of precisely two n-dimensional simplices.

\item  Any two n-dimensional simplices $\sigma$ $\sigma'$ can be joined by a  "chain" of n-dimensional simplices $\sigma_i$, each pair $(\sigma_i,\sigma_{i-1})$ have a common $(n-1)$-dimensional face.

\item every simplex is a face of some n-dimensional simplex.
\end{enumerate}
\end{definition}
Here the characteristic 1 is equivalent to $\mathrm{dim}(X_{sig})<n-1$. 
 \begin{definition}
Let the triangulation $T$ is compatible with the piecewise linear structure and $C_i^T(X)$ is the chain complex of $X$ with $T$. The chain complex $C_i(X)$ is the direct limit under refinement of $C_i^T(X)$ for all triangulations $T$.
 \end{definition}
 
The key point of Goresky and MacPherson's work is using allowable condition and perversity to control chain with meaningful intersection with singularity. 
\begin{definition}\cite{G}
The perversity is a sequence of integer  $\bar{p}=(p_2,p_3,p_4,....p_n,...)$ with $p_2=0$ and $p_{i+1}=p_i$ or $p_{i+1}=p_i+1$.
\end{definition}

For intersection homology theory, there are four special but important perversities. The zero perversity is  $\bar 0=(0,0,...0)$, and maximum perversity is defined as $\bar t=(0,1,2,...n-2)$. The lower middle perversity is $ \bar{m}=[\frac{n-1}{2}]$ and upper middle perversity is $\bar{n}=[\frac{n-2}{2}]$. If two perverities satisfies $\bar p+ \bar q=\bar t$, we say they are complementary perversities. Clearly $\bar{m}$ and $\bar{n}$ are complementary perversities.
 
We can define the $(\bar p,i)-$ allowable condition.  
\begin{definition}\cite{G}
For a perversity $\bar p$ and an integer $i$, a subspace $Y\subset X$ is called $(\bar p,i)-$ allowable, if $\mathrm{dim}(Y) \leq i$ and $\mathrm{dim}(Y \cap X_{n-k}) \leq i-k+p_k$ for all $k\geq 2$.
\end{definition}
 \begin{definition}\cite{G}
 $\mathrm{IC}_i^{\bar p}(X)$ is the subgroup of $C_i(X)$ consisting of $\xi$ such that
 the support of $\xi$ and $\partial \xi$ are both $(\bar p,i)-$ allowable.
 \end{definition}
\begin{definition}\cite{G}
The intersection homology with perversity $\bar p$ is the homology group of chain complex $\mathrm{IC}^{\bar p}_i(X)$. We use the notation $\mathrm{IH}^{\bar p}_i(X)$ to denote. Although the intersection homology seems to consider the direct limit of all triangulation, we only need to consider the stratification with barycentric subdivision of skeleton in \cite{G4}.
\end{definition}
Usually the intersection homology are not equal for different perversity. However, Siegel find pseudomanifold in some condition have the homology isomorphism :
\begin{equation}
   \mathrm{IH}^{\bar{m}}_{j}(X)\cong\mathrm{IH}^{\bar n}_{j}(X). 
\end{equation} \label{wittsegel}
The pseudomanifold with the Witt condition is called Witt space. Many space such as manifold and any complex projective variety are the Witt space. This is because manifold has no singular stratum and complex variety has only even codimensional stratum. However, the suspension of a torus is not a Witt space because of $H_1(T^2)\neq 0$.   
\begin{definition}\cite{S}
Let $X$ be a stratified pseudomanifold, with stratification \ref{stra}, $(L(\chi_{i}, x)$ is the link of $\chi_i$ at $x$. Then X is a Witt space if and only if for any $i=n-(2l+1)$ with $l\geq 1$.

$$\mathrm{IH}_{\ell}^{\bar{m}}\left(L\left(\chi_{i}, x\right) ; \mathbb{Q}\right)=0$$

\end{definition}
Because we research the non witt space, we consider the Witt condition for the stratum $\chi_i$:
\begin{definition}\label{nwitt}
The Witt condition for the codimensional stratum $\chi_i$ is the vanishing of the rational intersection homology of intrinsic links $(L(\chi_{i}, x)$ :
\begin{equation}
\mathrm{IH}_{\ell}^{\bar{m}}\left(L\left(\chi_{i}, x\right) ; \mathbb{Q}\right)=0
\end{equation}

\end{definition}
The condition of Witt space is stratified homotopy invariant.  If $X$ is a Witt space for some stratification then it is a Witt space for any stratification.  

In \cite{G} Theorem 3.3, Goresky and McPherson prove the Generalized Poincare Duality for intersection homology with complementary perversities $\bar p$ and $\bar q$.
\begin{theorem} \cite{G}
Suppose $X$ is a oriented pseudomanifold. Let $\bar p$ and $\bar q$ are complementary perversity. Then the bilinear form from intersection pairing:
$$\mathrm{IH}_{i}^{\bar p}(X,\mathbb{Q}) \times \mathrm{IH}_{i}^{\bar q}(X,\mathbb{Q}) \rightarrow \mathrm{IH}_{0}^{\bar t}(X,\mathbb{Q}) $$
is nondegenerate.
\end{theorem}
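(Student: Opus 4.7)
The plan is to establish the duality in two stages: first construct a well-defined bilinear pairing by moving cycles into general position with respect to the stratification, then prove nondegeneracy by interpreting the pairing as coming from a chain equivalence induced by cap product with the fundamental class $[X]$.

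For the construction of the pairing, I would take allowable cycles $\xi \in \mathrm{IC}_i^{\bar p}(X,\mathbb{Q})$ and $\eta \in \mathrm{IC}_{n-i}^{\bar q}(X,\mathbb{Q})$ and use PL general-position theory (after passing through a common refinement of triangulations) to arrange that the supports $|\xi|$ and $|\eta|$ meet transversely in the smooth part $X - X_{\mathrm{sing}}$ and, more importantly, miss every singular stratum in the dimension count: by the allowability conditions,
\[
\dim\bigl(|\xi| \cap |\eta| \cap X_{n-k}\bigr) \le (i-k+p_k) + (n-i-k+q_k) - (n-k) = p_k + q_k - k = -2,
\]
where the last equality uses $\bar p + \bar q = \bar t$. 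Hence the intersection is a $0$-chain lying in $X - X_{\mathrm{sing}}$, which automatically represents a class in $\mathrm{IH}_0^{\bar t}(X,\mathbb{Q})$. A standard moving-lemma argument, invoking allowable PL homotopies, shows the resulting class is independent of the general-position representatives and vanishes whenever one of $\xi$, $\eta$ is a boundary, so the pairing descends to intersection homology.

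For nondegeneracy, the strategy is to show that the pairing is adjoint to an isomorphism
\[
D_{\bar p} \colon \mathrm{IH}^{n-i}_{\bar p}(X,\mathbb{Q}) \xrightarrow{\ \cong\ } \mathrm{IH}_{i}^{\bar q}(X,\mathbb{Q})
\]
given by cap product with $[X]$. I would define $D_{\bar p}$ on the cochain level using the Poincaré–Lefschetz construction: pass to the barycentric subdivision $X'$ of a triangulation compatible with the stratification, pair a $\bar p$-allowable cochain with the sum of top-dimensional oriented simplices representing $[X]$, and check via the allowability bookkeeping for dual cells that the resulting chain is $\bar q$-allowable. This is precisely the framework developed in Section 2.5 where intersection chains are treated as coefficient systems / geometric modules on $X'$.

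The proof that $D_{\bar p}$ is an isomorphism proceeds by induction on the depth of the stratification, using a Mayer--Vietoris argument with a covering by distinguished neighborhoods $U \cong \mathbb{R}^{n-k} \times \mathring{c}L$ of points in $\chi_{n-k}$. The key local input is the cone formula
\[
\mathrm{IH}_i^{\bar p}(\mathring{c}L) = \begin{cases} \mathrm{IH}_i^{\bar p}(L) & i < k-1-p_k, \\ 0 & i \ge k-1-p_k, \end{cases}
\]
together with the corresponding formula for $\bar q$. Because $p_k + q_k = k-2$, the truncation degree on the $\bar p$ side is exactly complementary to the one on the $\bar q$ side, and the inductive hypothesis for the lower-dimensional link $L$ yields the duality on $\mathring{c}L$; a standard five-lemma argument on Mayer--Vietoris sequences then propagates the isomorphism to $X$. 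The base case is ordinary Poincaré duality on the top stratum $X - X_{n-2}$, which is an oriented manifold.

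The main obstacle is the matching of truncation levels in the cone formula under complementary perversities: one must verify that whenever the $\bar p$-truncation kills $\mathrm{IH}_i^{\bar p}(\mathring{c}L)$, the Poincaré-dual degree survives in $\mathrm{IH}_{n-i}^{\bar q}(\mathring{c}L)$ exactly via the inductive duality on $L$, with no phantom classes. This is a purely combinatorial check on the perversity identity $p_k + q_k = k-2$, but it is the step where the choice of perversities is used in an essential way; the sheaf-theoretic reformulation via Deligne's axiomatic characterization of $\mathbf{IC}^\bullet_{\bar p}$ and Verdier duality $\mathcal{D}(\mathbf{IC}^\bullet_{\bar p}) \simeq \mathbf{IC}^\bullet_{\bar q}[n]$ would be the cleanest way to package this verification if the combinatorial induction becomes unwieldy.
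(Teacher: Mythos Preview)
The paper does not supply a proof of this theorem: it is quoted verbatim from Goresky--MacPherson \cite{G} (their Theorem~3.3) as background, with no argument given. Your proposal is a correct and standard outline of the original proof---general-position construction of the intersection pairing, then nondegeneracy via cap product with $[X]$ established by induction on strata using the cone formula and a Mayer--Vietoris/five-lemma argument---so there is nothing in the paper to compare it against beyond noting that the same cone-formula/Mayer--Vietoris machinery is exactly what the paper later adapts (in its own Lemmas~\ref{p1} and~\ref{pl}) to the modified complexes $\widetilde W^{\bar m}_*$.
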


With the combination pf Generalized Poincare Duality and \ref{wittsegel}, it is natural to find lower middle perversity intersection chain is a self dual chain. Siegel construct the generalized signature invariant on the Witt space.

There are some important conception such as link and star needed to study pseudomanifold. 
Let $\sigma$ be a simplex in $X$ with triangulation $T$.
The star of $\sigma$ is the set of all simplices having $\sigma$ as a face.  $$\mathrm{St}(\sigma,T) = \{\tau \in T | \sigma \leq \tau\}.$$ 

Usually the star is not closed set, we use $\overline{\mathrm{St}}$ to represent the closure. The link of $\sigma$ (denoted $lk(\sigma, T)$) is set of all simplices in the closed star that are disjoint from $\sigma$.
$$lk(\sigma, T)=\{v \in \overline{\mathrm{St}}(\sigma, T) \mid v \cap \sigma=\emptyset\}$$
The intrinsic link plays an important role in pseudomanifold. The intrinsic link of barycenter of $i$-dimensional simplex $\hat\sigma$ is defined as $L(\hat\sigma)$. Moreover, we have   $$L(\hat\sigma)*S^{i-1}=lk(\sigma, T').$$  Locally, the neighbourhood $U$ of $x\in \chi_{i}$ is PL-homomorphics to $B^{i}\times C(L(x))$. (local trival condition). It is because $\chi_k$ is the union of interior of $k$-simplex. For one simplex named $\sigma$, we have $$D(\sigma,T')=\hat\sigma*lk(\sigma,T')$$ 

\subsection{Filter complex and $W^{p}_{*}(x)$}
In order to study the pseudomanifold, we introduce a special simplical chain complex named $W^{p}_*(X)$. These chain are first introduced in Siegel's dissertation \cite{S}. Xie and Higson in Appendix A of \cite{X} find the map for every basis of $W^{p}_*(X)$ is geometrically controlled. 

Based on fixed stratification, there are 3 different types of chain related with intersection homology. Although $IC^{\bar p}_*$ is enough to define intersection homology, this chain is not exploit enough to study the stratified homotopy equivalent. Goresky and MacPherson in \cite{G} introduce the basic set $Q^i_p(X)$ which suits for intersection pairing to prove many proposition. However, the disadvantage of the basic sets $Q^i_p$ is not fine enough to show chain equivalence. I will use $W^{p}_*$ instead of others.

With the triangulation $T$,  $X$ can be seen as simplicial complex. Usually we can give the stratification by the skeleton of $T$. More precisely, $X_i$ is a subcomplex of the i-th skeleton of X. 
$$X=X_n \supset  X_{n-2} \supset X_{n-3} ... \supset X_1\supset X_0.$$

With $T'$ is the barycentric subdivision of $T$, and $R^{\bar{p}}_i(X)$ are the $(\bar{p},i)$ allowable simplex in $T'$ with respect to the stratification:
$$\mathrm{dim}(R^{\bar{p}}_i(X))\leq i, \quad \mathrm{dim}(R^{\bar{p}}_i(X)\cap X_{n-k})\leq i-k+p_k.$$
Let $C^{T'}_i(R^{\bar{p}}_i)$ be the free abelian group generated by simplices of $R^{\bar{p}}_i(X)$. 
\begin{definition}
 $W^{\bar{p}}_i(X)$ are the subgroup in $C^{T'}_i(R^{\bar{p}}_i)$ with boundary supported in $R^{\bar{p}}_i$. 
\end{definition}
The intersection homology $\mathrm{IH}^{\bar{p}}_*(X)$ with the perversity $p$ is defined as the homology group with chain complex $W^{\bar{p}}_*(X)$ with boundary map $\partial$ below:
$$W^{\bar{p}}_n(X)\mathop\to\limits^{\partial} W^{\bar{p}}_{n-1}(X)\mathop\to\limits^{\partial}W^{\bar{p}}_{n-2}(X)....W^{\bar{p}}_{2}(X)\mathop\to\limits^{\partial}W^{\bar{p}}_1(X)\mathop\to\limits^{\partial}W^{\bar{p}}_0(X).$$
\begin{remark}
For equivariant case we need to consider the chain complex with coefficient $\mathbb{F}=C_{r}^{*}(\Gamma)$.
\end{remark}
Because the intersection homology deal with cone frequently. The filtration of $C(Z)$ to define $W^{\bar{p}}_{i}(C(Z)$ is different because $v*z'$ is not barycentric subdivision.
Suppose there is a filtration with $Z$:
$$Z=Z_{n} \supset Z_{n-1} \supset \cdots \supset_{1} \supset Z_{0}.$$
Then it induces a filtration on $C(Z)=v*Z$ by 
$$C(Z)=v * Z_n \supset v * Z_{n-1} \supset v * Z_{n-2} \supset \cdots \supset v * Z_{1} \supset v * Z_{0} \supset\{v\}$$
This filtration is flag like and the homology respected by $W^{\bar{p}}_{i}(C(Z)$ is intersection homology . 

The advantage of using $W^{\bar{m}}_{*}(X)$ instead of $\mathrm{IC}^{\bar{m}}_{*}(X)$ or $Q^i_p(X)$ is that $W^{\bar{m}}_j(X)\subset W^{\bar{n}}_j(X)$ for every $j$. Moreover $W^{\bar{m}}_{*}(X)$ has many important properties to construct the Hilbert Poincare complex.

In this thesis, the continuous map $f : X \to Y$ between two stratified spaces require to be stratum- preserving . That is for each pure stratum $T$ of $Y$ , the inverse image $f^{-1}(T)$ is a union of pure strata of $X$. A stratum-preserving map $f : X\to Y$ is placid if for each pure stratum $T$ of $Y$: $$\mathrm{codim}(f^{-1}(T)) \geq\mathrm{codim}(T).$$

For all finite filtered simplicial complex $Z$ of dimension $K-1$, denote $M_k$ to be the set of all $C(Z)$. A element $\xi$ in $W^{\bar{m}}_j(X)$ is modeled by $M_k$ if there exists a cone $C(Z)\in M_k$ and a placid simplicial map $\phi: C(Z) \to X$ such that $\xi = \phi(\omega)$, where $ \omega\in W^{\bar p}_k(C(Z)$.
\begin{lemma}\cite[Proposition A.6]{X} \label{lmofw}
If X is a filtered simplicial complex. There is a natural basis $\xi_i$
of $(W^{\tilde m}_*(X))$ such that $\xi_i$ cannot written as a sum of two nonzero elements and modeled by an element of $M_k$.
\end{lemma}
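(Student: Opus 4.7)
The plan is to produce the basis $\{\xi_i\}$ by exploiting the dual cone decomposition of the barycentric subdivision $T'$, following the strategy of Higson--Xie. For each simplex $\sigma \in T$, the dual cell $D(\sigma, T') = \hat\sigma * lk(\sigma, T')$ carries a natural cone structure centered at the barycenter $\hat\sigma$, and each simplex $\tau \in T'$ has a well-defined \emph{tip}: the smallest simplex of $T$ among those whose barycenters appear as vertices of $\tau$. Grouping the simplicial expansion of any chain $\xi \in W^{\bar m}_j(X)$ by tip decomposes it uniquely as $\xi = \sum_\sigma \xi_\sigma$ with $\xi_\sigma$ supported inside $\hat\sigma * lk(\sigma, T')$. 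The natural simplicial inclusion $\phi_\sigma \colon C(lk(\sigma, T')) \to X$ sending the cone vertex to $\hat\sigma$ is placid because the cone inherits its filtration stratum by stratum from $X$.

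For each $\sigma$, the cone formula for $W^{\bar m}_*$ recalled above (allowable chains on $v * Z$ are $v$ joined to perversity-truncated allowable chains on $Z$) gives a natural basis $\{\omega_{\sigma,r}\}_r$ of the summand of $W^{\bar m}_j(C(lk(\sigma, T')))$ sitting over the cone vertex. Setting $\xi_{\sigma, r} := \phi_\sigma(\omega_{\sigma, r})$, I would take the collection of all such $\xi_{\sigma, r}$ as the candidate basis. Spanning and linear independence follow immediately from the tip decomposition combined with the cone formula, and the modeling condition is witnessed by the triple $(C(lk(\sigma, T')), \phi_\sigma, \omega_{\sigma, r})$, placing each $\xi_{\sigma, r}$ among the chains modeled by an element of $M_j$.

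The main obstacle is indecomposability: ruling out $\xi_{\sigma, r} = \eta_1 + \eta_2$ with both $\eta_\ell \neq 0$ in $W^{\bar m}_j(X)$ and each modeled by some $C(Z_\ell) \in M_k$. My argument would proceed in two steps. First, the tip decomposition forces both $\eta_\ell$ to be supported in the single dual cone $\hat\sigma * lk(\sigma, T')$: otherwise $\eta_1$ would contribute a tip other than $\hat\sigma$ that must cancel against $\eta_2$, but $\eta_2$ — being the push-forward of an allowable chain from the single connected cone $C(Z_2)$ under a placid map — has a unique tip. Second, writing $\eta_\ell = \phi_\sigma(\omega'_\ell)$ with $\omega'_1 + \omega'_2 = \omega_{\sigma, r}$, I would use the cone formula to identify each $\omega'_\ell$ with an allowable cycle in the perversity-truncated link chain complex and reduce the question to indecomposability of $\omega_{\sigma, r}$ under sums of placid cone images. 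This I would handle by induction on the depth of the stratification: at minimum depth the link is a manifold and allowable cone chains are indecomposable fundamental cones; at higher depth the inductive step uses that a placid cone map $C(Z_\ell) \to C(lk(\sigma, T'))$ is determined by its link-level restriction, reducing the claim to the same statement for $W^{\bar m}_*(lk(\sigma, T'))$ at strictly smaller depth. The bookkeeping in this induction — in particular verifying $(\bar m, j-1)$-allowability of each piece of the boundary at each stage and ensuring that the cone formula respects the perversity truncation — is where I expect the most delicate technical work.
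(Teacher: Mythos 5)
The paper does not actually prove this lemma; it cites it verbatim from Higson--Xie \cite[Prop.\ A.6]{X}, so there is no in-paper proof to compare against. Evaluated on its own terms, your strategy (decompose by tip into dual cells, pull back to cones, induct on depth) is in the right spirit, but two of its load-bearing steps have genuine gaps.

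First, the tip decomposition $\xi=\sum_\sigma\xi_\sigma$ does not land in $W^{\bar m}_j(X)$ piece by piece, so ``spanning \dots follows immediately'' is not justified. Allowability of the \emph{support} of $\xi_\sigma$ is inherited from $\xi$, since support is monotone under passing to subchains, but allowability of $\partial\xi_\sigma$ is not. The boundary of a $T'$-simplex $[\hat\sigma_0,\dots,\hat\sigma_k]$ with tip $\sigma_0=\sigma$ has one face with strictly larger tip $\sigma_1$ (the face omitting $\hat\sigma_0$), and in the full boundary $\partial\xi=\sum_\sigma\partial\xi_\sigma$ these ``outward'' faces cancel against internal faces of $\xi_{\sigma_1}$. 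After you separate the tips, that cancellation disappears, and $\partial\xi_\sigma$ can acquire support that is $(\bar m,j-1)$-forbidden even though $\partial\xi$ was allowable. So the pieces $\xi_\sigma$ are not automatically in $W^{\bar m}_j$, and the candidate basis you build from them need not span without a further argument about when the decomposition is boundary-compatible.

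Second, and more seriously, the first step of your indecomposability argument rests on the assertion that a chain modeled by a single cone under a placid simplicial map has a unique tip. This is unjustified and in general false. The definition of ``modeled'' in the paper requires only a placid simplicial map $\phi:C(Z)\to X$, not an embedding and not the canonical inclusion of a dual cell. Even if $\phi$ sends the cone vertex to some $\hat\sigma$ and every simplex in the image contains $\hat\sigma$, having $\hat\sigma$ as a vertex does not make $\hat\sigma$ the \emph{smallest} barycenter among the vertices; the image simplices can and typically do have distinct tips. Without unique tip, you cannot conclude that both summands $\eta_1,\eta_2$ are confined to a single dual cone, so the reduction to an interior (link-level) indecomposability question does not get off the ground. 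To repair this you would need either to strengthen the modeling condition (e.g.\ restrict to maps sending $C(Z)$ onto the closed star of a single vertex), or to replace the tip argument by a direct minimality-of-support argument inside $W^{\bar m}_j$, which is closer to what Higson--Xie's framework actually supplies.
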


The lemma of \ref{lmofw} means every basis is supported on the star of vertex, which means the basis element is geometrically controlled over $X$. $W^{\bar{m}}_j(X)$ is a geometrically controlled $X$- module. Next we see the filtered complex $W^{\bar{p}}_i(X)$ is a geometric module over simplicial complex.   

Specially in the case of Witt space, that means the lower middle perversity intersection homology in the odd codimensional stratum about the link vanish, the inclusion map is the chain equivalence in \cite{S}. 
$$\iota:W_{*}^{\bar{m}}(X)  \hookrightarrow W_{*}^{\bar{n}}(X) $$
In particular, we can conclude that for lower middle perversity $\bar m$:
$\mathrm{IH}^{\bar{m}}_{j}(X)\cong\mathrm{IH}^m_{n-j}(X).$  
\begin{definition}
\cite{X} 
Let $X$ be an oriented pseudomanifold of dimension $n$.  $X$
is a geometrically controlled Poincare pseudomanifold of dimension $n$ if the duality chain map $P$ associated to the fundamental class $[X]$ is a chain equivalence in the geometrically controlled category. 
\end{definition}
Higson and Xie in \cite{X} theorem C.9 prove oritended Witt space is a geometrically controlled Poincare pseudomanifold. In this thesis, we need the result below.

Recall lower middle perversity $ \bar{m}=[\frac{n-1}{2}]$ and upper middle perversity $\bar{n}=[\frac{n-2}{2}]$. 
\begin{theorem}\cite[Theorem 4.3]{X}  \label{ordiagonal}
There exists a diagonal approximation map 
$$\Delta: W_{*}^{\overline{0}}(X) \rightarrow W_{*}^{\bar{m}}(X) \otimes W_{*}^{\bar{n}}(X)$$
that is unique up to chain homotopy.
Then the cap product is defined as :
$$\cap: W_{\bar{m}}^{j}(X) \otimes W_{k}^{\overline{0}}(X) \stackrel{q \otimes \Delta}{\longrightarrow} W_{\bar{p}}^{j}(X) \otimes\left(W_{*}^{\bar{m}}(X) \otimes W_{*}^{\bar{n}}(X)\right)_{k} \stackrel{\varepsilon \otimes 1}{\longrightarrow} W_{k-j}^{\bar{n}}(X)$$
\end{theorem}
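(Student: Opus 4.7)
The plan is to construct $\Delta$ by the method of acyclic models, using the basis of $W^{\bar 0}_*(X)$ described in Lemma \ref{lmofw}, where each basis element is modeled by a placid simplicial map from a cone $C(Z) \in M_k$. Thinking of $\Delta$ as a higher-perversity analog of the Alexander--Whitney map, the strategy is to define it inductively on the cone models, where the "front face / back face" decomposition separates naturally into an $\bar m$-allowable piece (containing the cone vertex and hence carrying the singularity) and an $\bar n$-allowable piece (lying in the base).

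First I would set up the inductive framework. On a $0$-dimensional model, set $\Delta(v) = v \otimes v$. Assuming $\Delta$ has been constructed on all models of dimension $<k$, let $C(Z) = v * Z \in M_k$ and let $\omega$ be the canonical chain in $W^{\bar 0}_k(v * Z)$. Its boundary $\partial\omega$ is supported in $Z$ and on lower-dimensional sub-cones, so $\Delta(\partial\omega)$ is already defined in $(W^{\bar m}_*(C(Z)) \otimes W^{\bar n}_*(C(Z)))_{k-1}$ and is a cycle. To lift to a $k$-chain whose boundary is $\Delta(\partial\omega)$, I would invoke the cone acyclicity of the tensor product bicomplex: by the cone formula for intersection homology, both $W^{\bar m}_*(C(Z))$ and $W^{\bar n}_*(C(Z))$ are acyclic in the relevant positive degrees (with the flag-like filtration on $C(Z)$ noted in the paper), and the Künneth spectral sequence then forces the tensor product complex to be acyclic in the range where the obstruction lives. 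This gives the required lift, and the inductive choice is then transported to $X$ via the placid model map $\phi: C(Z) \to X$; placidity and the support of $W^{\bar p}_*$ ensure the image stays inside $W^{\bar m}_*(X) \otimes W^{\bar n}_*(X)$.

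The main obstacle is verifying that the construction actually lands in the right allowability class. A chain in $W^{\bar m}_j$ must intersect $X_{n-k}$ in dimension at most $j - k + m_k$, and similarly for $\bar n$; since $\bar m + \bar n = \bar t$, the front/back decomposition must split the excess dimension across the singular stratum in a way compatible with complementary perversities. The point is that the cone vertex $v$ in each model sits over a stratum, and allocating factors containing $v$ to the $\bar m$-slot (which permits slightly larger intersection with lower strata) and factors disjoint from $v$ to the $\bar n$-slot is forced by the filtration on $v * Z$. Verifying this dimension count on every model, and showing that it is preserved under the choice of the acyclic-model lift, is the delicate step.

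Uniqueness up to chain homotopy follows from the same acyclicity argument applied to the difference of two choices: any two such $\Delta, \Delta'$ agree in degree $0$ up to boundaries, and the inductive extension of a chain homotopy is again governed by the acyclicity of the tensor product complex on cones. Finally, the cap product is obtained by the indicated composition: tensor $\Delta$ with the identity, pair a cochain in $W^j_{\bar m}(X)$ against the $W^{\bar m}_*$-tensor factor using the evaluation $\varepsilon$, and read off the remaining $W^{\bar n}_{k-j}(X)$-chain. Since both $\Delta$ and $\varepsilon$ are geometrically controlled with respect to the basis of Lemma \ref{lmofw}, the resulting cap product is automatically a geometrically controlled morphism of $(\mathbb{F}, X)$-modules, which is precisely the feature needed for the subsequent Poincar\'e duality arguments.
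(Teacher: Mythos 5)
Your proposal matches the approach of the paper's own analogous argument (Lemma~\ref{diagonala}, which mirrors the cited proof from \cite{X}): an acyclic-models induction over the cone models of Lemma~\ref{lmofw}, with existence and uniqueness of the lift supplied by vanishing of $H_k\bigl(W^{\bar m}_*(C(Z)) \otimes W^{\bar n}_*(C(Z))\bigr)$ in the obstruction range, which in turn follows from the cone formula together with the K\"unneth formula. One small remark: the ``allowability verification'' you flag as the delicate step is not a separate check in this framework --- the lift $\zeta$ is chosen inside the complex $\bigl(W^{\bar m}_* \otimes W^{\bar n}_*\bigr)(C(Z))$ from the outset, so landing in the right allowability class is built into the acyclicity statement itself; the genuine work is proving that acyclicity (and, in the paper's conventions, the factor that may contain the cone vertex is the $\bar n$-slot, the larger perversity, not the $\bar m$-slot as you wrote).
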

Here we use $W^j_{ \bar{m}}(X) =\mathrm{Hom}\left(W_{j}^{\bar{p}}(X), \mathbb{C}\right)$ to represent the dual cochain complex of $W_j^{ \bar{m}}(X)$. The general Poincare duality map for lower middle perversity chain $W^{\bar{m}}_*$ and upper middle perversity chain $W^{ \bar{n}}_*$ can be defined by 
$$\mathbb{P}:=- \cap [X] : W^j_{ \bar{m}}(X) \to W^{ \bar{n}}_{n-j}(X),$$
here the $[X] \in \mathrm{IH}_{n}^{\overline{0}}(X)$ are the fundamental class. 

This map satisfies $\partial\mathbb{P}\upsilon= (-1)^j\mathbb{P}\partial^{*}\upsilon$ for all
$\upsilon\in W^j_{\bar{n}}(X)$, and this map introduce a chain equivalence and furthermore an isomorphism from $\mathrm{IH}_{\bar{m}}^{j}(X)$ to $\mathrm{IH}^n_{n-j}(X)$. 

In the equivariant case, we use the filter complex $W^{\bar p}_i(X,F)$ with the local coefficient system $F = C^*_r(\Gamma)$, we obtain a chain complex of $C^*_r(\Gamma)$-Hilbert module 

In this dissertation, motivated by Cheeger \cite{C}, I use Higson and Xie's technique and results in \cite{X} to prove in some condition there are two equivalent chain $\widetilde{W}^{\bar{n}}_*(X)$ which interpolate with the lower middle perversity and upper middle perversity intersection homology chain complex:  
$$W^{\bar{m}}_j(X)\hookrightarrow\widetilde{W}^{\bar{m}}_j(X)\hookrightarrow \widetilde{W}^{\bar{n}}_j(X) \hookrightarrow W^{\bar{n}}_j(X).$$ 
Because we know the Poincare map is $\mathbb{P}:=- \cap [X] : W^j_{ \bar{m}}(X) \to W^{ \bar{n}}_{n-j}(X),$ the new Poincare dual on the new chains: 
$$\widetilde{\mathbb{P}}:=-\widetilde{\cap} [X]:\widetilde{W}_{\bar{m}}^{j}(X)\to\widetilde{W}^{\bar{n}}_{n-j}(X),$$
More specially, ${\widetilde{\mathrm{IH}}}^{\bar{m}}_j(X)\cong {\widetilde{\mathrm{IH}}}^{\bar{n}}_{n-j}(X)$. 
 Because every basis of is modelled by an element of cone, let us consider the cone first. 

\begin{remark}
In the pseudomanifold, the neighborhood of $x$ in $\chi_{k}$ is piecewise linear homomorphic to $x*S^{k-1}*L_x$, here $L_x$ is unique in P.L-category when $x$ in the same stratum $\chi_{k}$. So the key point of this note is dealing with the cone of link. Furthermore, we have $\mathrm{IH}^{p}_i(X,X-x)= \mathrm{IH}^{p}_{i-k-1}(L_x)$ when $i>n-p(n-k)-1$, and otherwise is 0. That means local homology is total decided by the link of stratum. \cite{G2}. However, the local homology of manifold is totally different with the pseudomanifold. For the manifold $M$, any point $x\in M$, the only non trivial homology is $\mathrm{H}_n(M, M-x)=Z$. So the key point of Poincare duality is the duality in the link of stratum. 
\end{remark}

\subsection{K-homology Class on the Geometrically Controlled Hilbert\\ Poincare Complex}\label{khl}

In the appendix of \cite{XX}, Weinberger, Xie and Yu construct the analytic K-homology class of signature class on the PL manifold $M$.  In fact, their methods can generalize to any combinatorial geometrically controlled Poincare complex on the PL pseudomanifold in \cite{X}. They proved that if there is a self dual coefficient system on simplicial complex, we can build the K-homology class on it. An advantage of this construction of k-homology is that we can define higher signature class if we consider the coefficient system $C_r^*(\Gamma)$ -Hilbert module. My research use the same framework to study higher signature on the Non-Witt space. My next target is building geometrically controlled Poincare complex on the Non-Witt space. 

A bounded geometry combinatorial manifold with triangulation is a bounded geometry simplicial complex.
 
The outline of \cite{XX} is that. Suppose there is a control map $\varphi: X\rightarrow X$. Here $X$ is a metric space with path metric. The key property is that the geometrically controlled $X$-module $E$ in PL space is homotopy invariant under the subdivision. Then we obtain a geometrically controlled Poincare complex $E^j$ of $X$ whose propagation approaches $0$ as $j \to \infty$. Then it is nature to form a continuous family of geometrically controlled Poincare complexes parametrized by t. This family  determines the localization algebra $C_{L}^{*}(X)$ then the K-homology class is an element of $K_{n}\left(C_{L}^{*}(X)\right)$.

Because we consider the bounded geometry space. This requires the number of simplices containing any given vertex should be uniformly bounded for all successive subdivision. This is call standard subdivision. Then we can use this subdivision to control the propagation. It is natural to construct the element of K-homology
\begin{definition}\label{sub1}\cite{XX}
Let $\sigma = [v_0,....v_n]$ be a standard simplex where the vertices $v_i$ with given order. Define the standard subdivision $\mathrm{Sub}(\sigma)$ as below :
\begin{equation}
    v_{ij}=(v_i+v_j)/2 \ \text{when}\ i\leq j
\end{equation}
When $j=i$,  $v_{ii}$ is just $v_i$. Hence the new division inherits the partial order of old vertices by setting :
\begin{equation}
v_{i j} \leq v_{k l} \quad \text { if } \quad k \leq i \text { and } j \leq l
\end{equation}
\end{definition}
We can follow the method of \cite{X} chapter 6 without any change. If $X$ is a bounded geometry piecewise linear Poincare pseudomanifold, then  $\operatorname{Sub}^n(X)=\operatorname{Sub}(\operatorname{Sub}^{n-1}(X))$ is uniform bounded geometry for $n\in\mathbb{N}$. Let $(W^{ \triangle }_{*}(X)\otimes \mathbb{C},b)$ be the Poincare complex based on the filtered simplicial complex $W^{\bar p}_*(X)$. 

Let us define 
$Q_{0}=\bigoplus_{k} W^{ \triangle }_{k}(X)\otimes \mathbb{C}$ and $Q_{2}=\bigoplus_{k} W^{ \triangle }_{k}(\mathrm{Sub}(X))\otimes \mathbb{C}$. Build $\hat Q_1$ and $Q_1$ for $\bigoplus_{k} C^{ \triangle }_{k}(\mathrm{Sub}(X))$ with this different geometrically controlled X-module structure such that $Q_0$ is submodule of $\hat Q_1$, $\hat Q_1$ and $Q_1$ are isomorphic.
Next construct a uniform family of geometrically controlled Poincare complexes $\left\{\left(Q_{t}, b_{t}, S_{t}\right)\right\}_{t \in[0,2]}$ to connect $Q_0$ and $Q_2$:
$$\left(Q_{t}, b_{t}, S_{t}\right)=\left\{\begin{array}{ll}\left(\widehat{Q}_{1}, b_{t}, S_{t}\right) & \text { if } 0 \leq t \leq 1 \\ \left(Q_{t}, b, S\right) & \text { if } 1 \leq t \leq 2\end{array}\right.$$

Similar, define the geometrically controlled $X$-module $Q_{2j}=\bigoplus_{k} {W}^{ \triangle}_{*}(\mathrm{Sub}^j(X))$. If there is a geometrically controlled Poincare complex of $X$, propagation of Poincare complex on $Q_{2j}$ approach 0 when j go to $\infty$. 

Moreover connect all $Q_{2j}$, $\left\{\left(Q_{t}, b_{t}, S_{t}\right)\right\}_{t \in[0,\infty)}$ is a uniform family of geometrically controlled Poincare complexes.
$$\varepsilon_1<\left\|B_{t} \pm S_{t}\right\|<C_1$$
$p(x)$ is a polynomial on $[\varepsilon, C] \cup[-C,-\varepsilon]$ such that
$\sup _{x \in[\varepsilon, C]}\left|p(x)-\frac{1}{x}\right|<\frac{1}{C}.$
Then $p(B_t-S_t)$ is invertible. Let $$U_{t}:=\left(B_{t}+S_{t}\right) \cdot p\left(B_{t}-S_{t}\right)$$
In fact, it is a norm-bounded and uniformly continuous path of
invertible elements and the propagation of $U_t$ approach 0. So we can use the definition of  K-homology class of the signature operator on Riemannian manifold $M$ to define the K-homology class on $X$.
\begin{definition}
The K-homology class of the signature operator of $X$ is defined to be the K-theory class of the path $U$ in $K_1(C_L^*(X))$. K-homology class is $[D_{\mathrm{sign}}]$ .
\end{definition}

\begin{definition}
The K-homology class of the signature operator on $X$ is defined to be the K-theory class in $K_0(C_L^*(X))$ determined by $Q$: a norm-bounded and uniformly continuous path of $\sigma$-quasi-projections $[0,\infty] \to C^*(X)$. 
\end{definition}

\section{Non Witt Space with conical singularities}

In this chapter we consider the odd non Witt space where the Witt condition only fails in the link of conical singularities. We will show the Lagrange structure make $X$ to be a geometrically controlled Poincare pseudomanifold. Then we give the higher signature index class for this spaces in the framework of Higson and Roe. 

Let $X$ be an odd dimensional pseudomanifold. Here the dimension we assume is $k=2s+1$. For the odd codimensional stratum $\chi_{k-2j+1}$, we define the intrinsic link about $x\in\chi_{k-2j-1}$ is $L(\chi_{k-2j-1},x)$. We know the Witt condition about the intersection homology of intrinsic link is  $L(\chi_{k-2j-1},x)$:
$$\mathrm{IH}^{\bar m}_{j}(L(\chi_{k-2j-1},x),Q)= 0.$$  

\begin{definition}\label{nonwi1}
The Non Witt space $X$ in this chapter is a $2s+1$ dimensional pseudomanifold satisfies:  
$$\mathrm{IH}^{\bar m}_{s}(L(\chi_0,x),Q)\neq 0.$$
and for any other odd codimension $2j+1$ stratum $\chi_{k-2j+1}$, we have:  
$$\mathrm{IH}^m_{j}(L(\chi_{2s+1-2j-1},x),Q)= 0.$$
\end{definition}

Because the Witt condition is necessary for $\mathrm{IH}^m_i(X)\cong \mathrm{IH}^n_i(X)$, we can get $\mathrm{IH}^m_i(X)\neq \mathrm{IH}^n_i(X)$ at least for some dimension $i$. Specially, the $W^{\bar{m}}_*(X)$ is chain equivalent to $W^{\bar{n}}_*(X)$.

In this section, we use the technique of Xie and Higson's article \cite{X} Appendix C to filter the $W^{\bar{m}}_*(X)$ and $W^{\bar{n}}_*(X)$. This method is a generalization of framework in \cite{S} Chapter III section 3.  First we define the perversity $\bar P_r$ :
\[
    \bar P_r(i)=\left\{
                \begin{array}{ll}
                  \bar{m}(i) \ \text{for} \  i<r\\
                \bar{n}(i) \ \text{for} \  i>r
                \end{array}
              \right.
  \]
Let us consider the filtration 
$$W_{*}^{\bar{m}}=W_{*}^{\bar{p}_{2 r+1}} \subset W_{*}^{\bar{p}_{2 r-1}} \subset \cdots \subset W_{*}^{\bar{p}_{3}} \subset W_{*}^{\bar{p}_{1}}=W_{*}^{\bar{n}}$$ 
By the definition, for all other odd codimension 
$ 2j+1\neq 2s+1$, we have Witt condition, 
$$\mathrm{IH}^m_{l}(L(\chi_{2s+1-2j-1},x),Q)= 0,$$
 then it can be proved $W^{\bar P_{2j-1}}_*(X)$ and $W^{\bar P_{2j+1}}_*(X)$  are chain equivalent by \cite{S} Chapter III, Theorem 3.2.In fact in \cite{X} Appendix C, the chain equivalence is in a geometrically controlled chain equivalent. Similar,  $W^{\bar{n}}_*(X)$ and $W^{\bar P_{2s-1}}_*(X)$ are chain equivalent.

So we only need to consider the chain complex $W^{\bar P_{2s+1}}_*(X)$ and $W^{\bar P_{2s-1}}_*(X)$, the allowable requirement of the two chains complex are the same except codimension $2s+1$ .  When $y\in W^{\bar P_{2s-1}}_i(X)$, let us consider the $\bar P_{2s-1}$ allowable inequality respect to $X_0=\chi_{2s+1-(2s+1)}$: 
$$\mathrm{dim}(y\cap \chi_{2s+1-(2s+1)})\leq i-(2s+1)+\bar P_{2s-1}(2s+1)=i-s-1.$$
The stronger restriction about $y\in W^{\bar P_{2s+1}}_i(X)$ is: $$\mathrm{dim}(y\cap \chi_{2s+1-(2s+1)})\leq i-(2s+1)+\bar P_{2s+1}(2s+1)=i-s-2.$$
Clearly we can find for $i<s+1$ and $i>s+1$, $W^{\bar P_{2s+1}}_i(X)=W^{\bar P_{2s-1}}_i(X)$. So we only need to consider the  $s+1$ dimensional chain . Let us fix the direct sum decompose 
\begin{equation}
    W^{\bar P_{2s-1}}_{s+1}(X)=W^{\bar P_{2s+1}}_{s+1}(X)+U.
\end{equation}
We need to change the $W^{\bar{P}_{2s+1}}_{s+1}(X)$ and $W^{\bar P_{2s-1}}_{s+1}(X)$ such that the new chains are chain equivalent respect to the inclusion map. Next Step shows that the Poincare duality map still holds for new chains.

Let $T$ be the triangulation for defining $W^{\bar m}_{*}(X)$. Define $T'$ as first barycentric subdivision. For the chain $W^{\bar{m}}_*(X)$, there is a basis which element is minimal and modeled by $\mathcal{M}_k$ in \cite{X}. There is a natural basis $\sum u_i$  in the space of U . So we require $u_i$ intersect with $X_0$.   $$\mathrm{dim}(u_i\cap X_{0})=0.$$ 

We assume the intersection points are $\tau_i\in X_0$. For each $\tau_i$, we can see the subchain of $u_i$ as the combination of simplices $\sigma_j$ which is the join complex with the intersection point and $s$-simplex $v_i\in C^{T'}_{s+1}(lk(\sigma_i,T'))$:
$$u_i=\tau_i*v_i.$$ Here $lk(\sigma_i,T')$ is the link respect to $\tau_i$ in $T'$, then it is the intrinsic link with singular stratum. 

In \cite{S} Chapter III, Lemma 3.3 , we know $\bar{v_i}$ is actually a cycle of  $C^{T'}_{s+1}(lk(\sigma_i,T'))$
$$\partial({v_i})=0. $$ 
So we can find  the subspace $U$ is linear combination of $\tau_i*v_i$. 
Moreover, there is a  canonical isomorphism between cycle $\bar{v_i}\in C^{T'}_{s}(lk(\sigma_i,T)')$ and $v_i$.
In addition, by Siegl \cite{S} Lemma 3.4, $v_i$ satisfies the allowable condition of perversity $\bar m$. Combine all information above, we get 
$$\bar v_i\in W^{\bar m}_s(lk(\sigma_i,T)').$$

We can decompose the cycle $\bar{v_i}$ into the boundary of chain $$\bar{w} \in W^{\bar P_{2s+1}}_{s+1}(lk(\sigma_i,T)')$$ and the representative element in homology group
$\bar{h}\in \mathrm{IH}^{\bar{m}}_s lk(\sigma_i,T)$, such that $$\bar{v}=\partial (\bar{w})+\bar{h}.$$  
Let the set of $s$-cycle in  $W^{\bar m}_s(lk(\sigma,T))$ be $V$. Next we will separate $V$ into 3 sets.
\subsection{Lagrange Structure}
Let us consider the lower middle intersection homology 
$\mathrm{IH}^{\bar{m}}_s(lk(\sigma_i,T))$. 
 Cheeger in \cite{cheeger1979spectral} prove that if the signature of the link is 0, there is a Lagrangian space in $\mathrm{H}$. This means that there is a subspace $\mathrm{H_l}$ which is isomorphic to its annihilator about the intersection form. In the geometrical viewpoint it means: 
\begin{enumerate}\label{l1}
    	\item $\mathrm{IH}^{\bar{m}}_s (lk(\sigma_i,T))=\mathrm{H}_{lag}\oplus \mathrm{H}_{lag}^{\perp}$,
        \item $ \mathrm{H}_{lag} \cong \mathrm{H}_{lag}^{\perp}\mathrm{,} $.
        \item For every $h_1\in \mathrm{H}_{lag}$ and $h_2\in \mathrm{H}_{lag}^{\perp}$, then there exist two representatives $v_1$ for $h_1$ and $v_2$ for $h_2$ , we have $v_1\pitchfork v_2 \neq 0$. Here the signal $\pitchfork$ is the transverse intersection.
    \end{enumerate}
By assumption, if $-[L']$ is the fundamental class of $lk(\sigma_i,T')$ , we know there is a mapping from $\mathrm{IH}_{\bar{m}}^s(lk(\sigma_i,T))$ to $\mathrm{IH}^{\bar{m}}_s(lk(\sigma_i,T))$ induced by Poincare dual map of $\mathbb{P}'=-[L']$. For any two cohomology class 
$[v^*_1] \in\mathrm{Hom}(\mathrm{H}_{lag},\mathbb{C})$ and  $[v^*_2]\in \mathrm{Hom}(\mathrm{H}_{lag}^{\perp}, \mathbb{C})$, then we can get $$\mathbb{P}'([v^*_1])\in \mathrm{H}_{lag}^{\perp}\ , \ \text{and} \ \mathbb{P}'([v^*_2])\in \mathrm{H}_{lag}.$$ 
Let the space $V_{lag}$ be all the possible nonzero representative of $\mathrm{H}_{lag}$, i.e 
\begin{equation} \label{eqv1}
 V_{lag}=\{h + \partial w \in V | h\neq 0 \in\mathrm{H}_{lag} \ , and \ w \in W^{p_{2s+1}}_{s+1}(lk(\tau_i,T)')\}. 
\end{equation} 
We can define $V_{lag}^{\perp}$ similarly .
For $V_o$ we define :
\begin{equation} \label{eqo}
V_o=\{ \partial w \in V|w \in W^{p_{2s+1}}_{s+1}(lk(w_i,T)')\}.
\end{equation} 
So we decompose $V$ into:
$$V=V_{lag}\oplus V_{lag}^{\perp}\oplus V_o$$
We know $U=\sum \tau_i*v_i$, so the separation of $V$ is the separation of U. Define $C(V_{lag})$ as: 
$$C(V_{lag})=\sum \tau_i*v_i \quad \text{when} \quad v_i\in V_{lag} $$ Similarly, we can define $C(V_{lag}^{\perp})$ and $C(V_o)$.  Hence we can decompose $U$ as :
\begin{equation}
U=C(V_{lag})\oplus C(V_{lag}^{\perp})\oplus C(V_o)
\end{equation}
Let us define:  
$$\widetilde{W}^{\bar P_{2s-1}}_{s+1}=W^{\bar P_{2s-1}}_{s+1}-C(V_{lag}^{\perp})$$
$$\widetilde{W}^{\bar P_{2s+1}}_{s+1}=W^{\bar P_{2s+1}}_{s+1}+C(V_{lag})+C(V_o)$$
For the definition, we can still get $\widetilde{W}^{\bar P_{2s+1}}_{s+1}\subset \widetilde{W}^{\bar P_{2s-1}}_{s+1}$.

Because $\partial\widetilde{W}^{P_{2s+1}}_{s+1}(X)\subset\widetilde{W}^{P_{2s+1}}_{s}(X)$, the chain complex are well defined in the $X$. For any subcomplex $Y$ of the $X$, we can let 
$$\widetilde{W}^{P_{2s-1}}_{j}(Y)=W^{P_{2s-1}}_{j}(Y)\cap\widetilde{W}^{P_{2s-1}}_{j}(X).$$ 
 
The refined intersection homology is not topological invariant or stratified homotopy invariant because it depends on the Lagrange structure of boundary, but it can play role in other fields.

Because we know the $W^{P_{2j-1}}_{*}(X)$ is equivalent to ${W}^{P_{2j+1}}_{*}(X)$ when $j<s$ or $j>s$, we need to add the same space for these chain complex. If $j< s$, we define :
$\widetilde{W}^{\bar P_{2j-1}}_{j}(X)=W^{P_{2s-1}}_{j}(X)$ If $j>s$
$\widetilde{W}^{\bar P_{2j+1}}_{j}(X)=W^{P_{2s+1}}_{j}(X).$ Specially, for chain complex of lower middle perversity $W_{*}^{\bar{m}}=W_{*}^{\bar{p}_{2 r+1}}$ and upper middle perversity $W_{*}^{\bar{p}_{1}}=W_{*}^{\bar{n}}$, we can make the same change.
\begin{definition}\label{mi1}
The modified intersection chain $\widetilde{W}^{\bar m}_{*}(X)$ and $\widetilde{W}^{\bar{n}}_{j}(X)$ are defined as:
\begin{equation}
   \widetilde{W}^{\bar{m}}_{j}(X)={W}^{\bar m}_{j}(X) +C(V_{lag})+C(V_o). 
\end{equation}
\begin{equation}
   \widetilde{W}^{\bar{n}}_{j}(X)={W}^{\bar{n}}_{j}(X)-C(V_{lag}^{\perp}).
\end{equation}
\end{definition}
Because $\partial \widetilde{W}^{\bar{m}}_{j}(X)\subset\widetilde{W}^{\bar{m}}_{j-1}(X)$, in fact we construct two chain complex $\widetilde{W}^{\bar{m}}_n(X)$ and $\widetilde{W}^{\bar{m}}_n(X)$:
$$ \widetilde{W}^{\bar{m}}_n(X)\mathop\to\limits^{\partial}  \widetilde{W}^{\bar{m}}_{n-1}(X)\mathop\to\limits^{\partial} \widetilde{W}^{\bar{m}}_{n-2}(X).... \widetilde{W}^{\bar{m}}_{2}(X)\mathop\to\limits^{\partial} \widetilde{W}^{\bar{m}}_1(X)\mathop\to\limits^{\partial} \widetilde{W}^{\bar{m}}_0(X).$$
These two chain complex interpolate the filter chain complex with lower middle perversity and upper middle perversity.
\begin{lemma} \label{l3}
The new chains $\widetilde{W}^{\bar m}_{*}(X)$ and $\widetilde{W}^{\bar n}_{*}(X)$ are chain equivalent respect to the inclusion map $\iota$.

\end{lemma}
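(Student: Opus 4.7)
My plan is to factor the inclusion $\iota$ through a modified perversity filtration, treating the single failing Witt stratum separately from the Witt-satisfying ones.

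First, I would verify that $\widetilde W^{\bar m}_*$ and $\widetilde W^{\bar n}_*$ are genuine chain complexes. The only nontrivial check is $\partial \widetilde W^{\bar m}_{s+1}(X) \subseteq \widetilde W^{\bar m}_s(X)$, and analogously for $\bar n$. For a generator $\tau_i \ast v_i$ of $C(V_{lag}) \oplus C(V_o)$ (and $C(V_{lag}^\perp)$ for the $\bar n$-side), its boundary is $\pm v_i$, and Siegel's Lemmas 3.3--3.4 of Chapter III in \cite{S} show $v_i$ is a cycle in the intrinsic link satisfying $\bar m$-allowability, hence $v_i \in W^{\bar m}_s(X) = \widetilde W^{\bar m}_s(X) = \widetilde W^{\bar n}_s(X)$, giving closure.

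Second, I would introduce intermediate modified complexes $\widetilde W^{\bar P_{2k+1}}_*$ for $0 \leq k \leq s$, agreeing with $W^{\bar P_{2k+1}}_j$ when $j \neq s+1$, with $\widetilde W^{\bar m}_{s+1}$ when $k = s$, and with $W^{\bar P_{2k+1}}_{s+1}(X) - C(V_{lag}^\perp)$ when $k < s$ (using that $C(V_{lag}^\perp) \subseteq U \subseteq W^{\bar P_{2k+1}}_{s+1}(X)$). This yields a filtration
\[
\widetilde W^{\bar m}_* = \widetilde W^{\bar P_{2s+1}}_* \subseteq \widetilde W^{\bar P_{2s-1}}_* \subseteq \cdots \subseteq \widetilde W^{\bar P_1}_* = \widetilde W^{\bar n}_*.
\]
The first inclusion is the identity, since the direct-sum decomposition $W^{\bar P_{2s-1}}_{s+1}(X) = W^{\bar m}_{s+1}(X) \oplus C(V_{lag}) \oplus C(V_{lag}^\perp) \oplus C(V_o)$ forces both $\widetilde W^{\bar P_{2s+1}}_{s+1}(X)$ and $\widetilde W^{\bar P_{2s-1}}_{s+1}(X)$ to equal $W^{\bar m}_{s+1}(X) + C(V_{lag}) + C(V_o)$.

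Third, for each remaining inclusion $\widetilde W^{\bar P_{2k+1}}_* \hookrightarrow \widetilde W^{\bar P_{2k-1}}_*$ with $0 \leq k < s$, the Witt condition of Definition~\ref{nwitt} holds at codimension $2k+1$, so the unmodified inclusion $W^{\bar P_{2k+1}}_* \hookrightarrow W^{\bar P_{2k-1}}_*$ is a geometrically controlled chain equivalence by Siegel's Theorem 3.2 of Chapter III in \cite{S}, geometrically refined in \cite[Appendix C]{X}. Both modified chains fit into parallel short exact sequences
\[
0 \to \widetilde W^{\bar P_{2k\pm 1}}_* \to W^{\bar P_{2k\pm 1}}_* \to \mathcal Q_* \to 0,
\]
where $\mathcal Q_*$ is the two-term complex with $C(V_{lag}^\perp)$ in degree $s+1$ and $\partial C(V_{lag}^\perp)$ in degree $s$, identical in both rows. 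A five-lemma argument then promotes the bottom chain equivalence to the top.

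The main obstacle is preserving geometric control through this five-lemma step. Since $\mathcal Q_*$ is supported in a bounded neighborhood of the conical singular set $\chi_0$, while Siegel's chain homotopies for the Witt step at codimension $2k+1 < 2s+1$ are local to the $(2k+1)$-codimensional stratum (which is disjoint from $\chi_0$), Theorem~\ref{landg} on the diagonal part of geometric morphisms ensures the chain-homotopy inverse and the null-homotopies descend compatibly to the modified subcomplexes. Composing all steps of the filtration then yields the required geometrically controlled chain equivalence $\iota$.
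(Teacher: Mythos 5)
Your route is essentially the one the paper uses: pass through the Goresky--MacPherson--Siegel perversity interpolation $W^{\bar P_{2r+1}}_*\subset\cdots\subset W^{\bar P_1}_*$, observe that the modification collapses the failing step $\widetilde W^{\bar P_{2s+1}}_*=\widetilde W^{\bar P_{2s-1}}_*$ to an equality, and invoke Siegel's Theorem 3.2 (as refined geometrically in \cite[App.~C]{X}) for the Witt steps. What you add over the paper's three-sentence proof is the five-lemma mechanism for transferring Siegel's chain equivalence from the unmodified complexes $W^{\bar P_{2k\pm1}}_*$ to their modified versions; the paper simply asserts that Siegel gives the equivalence of $\widetilde W^{\bar P_{2s-1}}_*$ and $\widetilde W^{\bar n}_*$ without explaining why the modification is compatible, so your short-exact-sequence argument is a genuine (and welcome) clarification of that step. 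One small slip: since $\widetilde W^{\bar P_{2k\pm1}}_j=W^{\bar P_{2k\pm1}}_j$ for all $j\neq s+1$, the quotient $\mathcal Q_*=W^{\bar P_{2k\pm1}}_*/\widetilde W^{\bar P_{2k\pm1}}_*$ is concentrated in the single degree $s+1$ (isomorphic to $C(V_{lag}^\perp)$ there, zero elsewhere), not a two-term complex with $\partial C(V_{lag}^\perp)$ in degree $s$ --- the boundary of a generator $\tau_i*v_i$ already lies in $W^{\bar m}_s=\widetilde W^{\bar P_{2k\pm1}}_s$ and so vanishes in the quotient. The five-lemma argument goes through identically with the correct one-term $\mathcal Q_*$, since the induced map on quotients is still the identity. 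Finally, your step~1 only verifies closure of $\partial$ in degree $s+1$; for the $\bar n$-side you also need $\partial W^{\bar n}_{s+2}$ to avoid $C(V_{lag}^\perp)$, which is a genuine (if mild) point that the paper also leaves implicit.
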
 
\begin{proof}
Witt condition holds for other odd codimensional stratums. $W^{P_{2s-1}}_{*}(X)$ is equivalent to ${W}^{n}_{*}(X)$ and 
$\widetilde{W}^{P_{2s-1}}_{*}(X)$ is chain equivalent to $\widetilde{W}^{n}_{*}(X)$ Because of \cite{S} Chapter III, Theorem 3.2. In fact, $\widetilde{W}^{P_{2s+1}}_{*}(X)$ and $\widetilde{W}^{ P_{2s-1}}_{*}(X)$ are chain equivalent due to the construction.  
\end{proof}
By the construction, $\widetilde{W}^{\bar{n}}_{j}(X)$ are chain equivalent to  $\widetilde{W}^{P_{2s-1}}_{j}(X)$.
Let $\widetilde{W}_{\bar{n}}^{*}(X)$ be $\mathrm{Hom}(\widetilde{W}^{\bar{n}}_{*},\mathbb{C}),$ and $\widetilde{W}_{\bar{m}}^{*} =\mathrm{Hom}(\widetilde{W}^{\bar{m}}_{*}(X),\mathbb{C})$. Let $d_i=\partial_i^*$ be the differential operator which is adjoint of the boundary map $\partial$.  Because  $\mathrm{Hom}(,\mathbb{C})$ functor are exact, then the $\widetilde{W}^{\bar{m}}_{*}(X)$ and $\widetilde{W}^{ \bar{n}}_{*}(X)$ are chain equivalent by the lemma \ref{l3}.  Here the chian map from $\widetilde{W}^{\bar{m}}_{*}(X)$ to $\widetilde{W}^{ \bar{n}}_{*}(X)$ is $\iota^*$ which is induced by $\iota$. \\
\begin{tikzcd}
   & & ... & \widetilde{W}_{\bar{n}}^{i+1}(X) \arrow{l}{d_{i+1}} \arrow[swap]{d}{\iota^{*}_{i+1}}
& \widetilde{W}_{\bar{n}}^i(X)\arrow{d}{\iota^{*}_i} \arrow{l}{d_{i}} & \widetilde{W}_{\bar{n}}^{i-1}(X) \arrow{d}{\iota^{*}_{i-1}} \arrow{l}{d_{i-1}} & ... \arrow{l}{d_{i-2}} \\
  & &...& \widetilde{W}_{\bar{m}}^{i+1}(X) \arrow{l}{d_{i+1}} 
& \widetilde{W}_{\bar{m}}^i(X) \arrow{l}{d_{i}} &  \widetilde{W}_{\bar{m}}^{i-1}(X) \arrow{l}{d_{i-1}} &...\arrow{l}{d_{i+1}}
\end{tikzcd}\\
\begin{prop}
There is a basis
of $\widetilde{W}^{\bar{m}}_{*}(X)$ such that each basis element is minimal and moreover modeled by $M_k$.
\end{prop}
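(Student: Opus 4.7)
The plan is to extend the natural basis of $W^{\bar m}_*(X)$ provided by Lemma~\ref{lmofw} by adjoining basis elements for the additional cone chains $C(V_{\mathrm{lag}}) \oplus C(V_o)$. Since the enlargement of $W^{\bar m}_*$ to $\widetilde W^{\bar m}_*$ affects only degree $s+1$, in every other degree I take the basis unchanged. In degree $s+1$ I retain the Lemma~\ref{lmofw} basis of $W^{\bar m}_{s+1}(X)$ and append new basis elements, one for each generator of $V_{\mathrm{lag}}$ and each generator of $V_o$.

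Concretely, for each conical singularity $\tau_i \in X_0$ I fix a basis $\{h_{i,j}\}_j$ of the Lagrangian subspace $H_{\mathrm{lag}} \subset \mathrm{IH}^{\bar m}_s(\mathrm{lk}(\tau_i,T'))$, lifted to cycle representatives $v_{i,j} \in W^{\bar m}_s(\mathrm{lk}(\tau_i,T'))$, together with a basis $\{\partial w_{i,k}\}_k$ of the boundary subspace producing $V_o$ via \eqref{eqo}. The new basis elements of $\widetilde W^{\bar m}_{s+1}(X)$ are then the cones $\eta_{i,j} := \tau_i * v_{i,j}$ and $\eta'_{i,k} := \tau_i * \partial w_{i,k}$. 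Each is modelled by an element of $M_{s+1}$: the filtered cone $C(\mathrm{lk}(\tau_i,T'))$ belongs to $M_{s+1}$, and the placid simplicial map sending the cone point to $\tau_i$ realises $\eta_{i,j}$ (respectively $\eta'_{i,k}$) as the image of a $\bar m$-allowable $(s+1)$-chain in the model cone, the allowability being exactly the content of \cite{S}, Lemma~3.4, for cycles in the link.

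The main obstacle is verifying minimality in the sense of Lemma~\ref{lmofw}. Each $\eta_{i,j}$ has support $\overline{\mathrm{St}}(\tau_i) = \tau_i * \mathrm{lk}(\tau_i,T')$, a single closed vertex star, and is a cone from the single vertex $\tau_i$, so it cannot split as a sum of two nonzero chains supported on disjoint stars. What must be ruled out is a cancelling decomposition within $\overline{\mathrm{St}}(\tau_i)$: any original $W^{\bar m}_{s+1}$-basis element has its $s$-boundary $(\bar m, s)$-allowable and thus disjoint from $X_0$, so cannot cancel the boundary $v_{i,j}$ of $\eta_{i,j}$, which is forced to live on the link of $\tau_i$. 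Consequently a putative decomposition $\eta_{i,j} = \alpha + \beta$ would descend to a nontrivial decomposition of $v_{i,j}$ within the chosen basis $\{v_{i,j}\} \cup \{\partial w_{i,k}\}$, contradicting their independence. The delicate step I expect to require the most care is arranging the local complement near each $\tau_i$ so that the new basis elements and the original $W^{\bar m}_{s+1}$-basis elements restricted to $\overline{\mathrm{St}}(\tau_i)$ are honestly independent, with no hidden cancellation between cone generators and the cone-like pieces of the $W^{\bar m}_{s+1}$-basis furnished by Lemma~\ref{lmofw}.
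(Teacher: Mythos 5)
Your proof is correct and takes essentially the same approach as the paper, which simply cites Proposition~A.6 of~\cite{X}: retain the natural basis of $W^{\bar m}_*(X)$ and extend it in degree $s+1$ by the cone generators $\tau_i * v_{i,j}$ spanning $C(V_{lag}) \oplus C(V_o)$. The closing worry about ``hidden cancellation'' with the original basis is not actually an issue: the decomposition $W^{\bar P_{2s-1}}_{s+1}(X) = W^{\bar P_{2s+1}}_{s+1}(X) \oplus U$ was fixed in advance with $C(V_{lag}) \oplus C(V_o) \subset U$, so the new generators are automatically independent of the old ones, and since every simplex in the support of $\tau_i * v_{i,j}$ contains the vertex $\tau_i$, any disjoint-support decomposition of such a generator is forced to be trivial.
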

Because of the proposition A.6 in \cite{X} (also see\ref{lmofw}),  we know we just add the Lagrange subspace $C(V_{lag})$ of $U$ to $W^{\bar{m}}_{*}(X)$ to construct $\widetilde{W}^{\bar{m}}_{*}(X)$. 
\subsection{Cone Formula}
The diagonal approximation and Poincare duality relay on the cone formula of intersection homology. Let $Z$ be a simplex of dimension $j$ in $X$, and the cone $C(Z)=\tau*Z$. The intersection homology $\mathrm{IH}^{\bar p}_{i}$ with perversity $\bar p$ we have the cone formula because of \cite{kirwan2006introduction}:
\begin{equation}
\mathrm{IH}^{\bar p}_{i}(C(Z))=
\begin{cases}
\mathrm{IH}^{\bar p}_{i}(Z) \quad  i<j- \bar p(j+1)  \\
0 \quad  otherwise.
\end{cases}
\end{equation}
Because we know the difference between  $\widetilde{W}^{\bar{m}}_{*}(X)$ and $W^{\bar{m}}_{*}(X)$ is in the dimension $s+1$. When $j<2s-1$, we get  
$$s+1 >j-\bar m(j+1).$$ So we get the cone formula related $\widetilde{W}^{ \bar{m}}_{i}(C(Z))$ is just same with $\mathrm{IH}^{\bar p}_{i}(C(Z))$. The intersection homology of cone formula respect to $\widetilde{W}^{ \bar{n}}_{i}(C(Z))$ is same too, because $\widetilde{W}^{ \bar{m}}_{*}$ is chain equivalent with $\widetilde{W}^{ \bar{m}}_{*}$. 
\begin{equation}
\widetilde{\mathrm{IH}}^{\bar m}_{i}(C(Z))=\begin{cases}
\mathrm{IH}^{\bar m}_{i}(Z) \quad  i<j- \bar m(j+1)  \\
0 \quad  otherwise.
\end{cases}
\label{c1}\end{equation}

When $j=2s$ and $\tau$ is the vertex in the singularity, $s+1=j-\bar m(j+1).$ For intersection homology with lower middle perversity and upper middle perversity is same when $i\neq s$ $$\mathrm{IH}^{\bar m}_{i}(C(Z))=\mathrm{IH}^{\bar n}_{i}(C(Z)). $$ 
For dimension $s$, $\mathrm{IH}^{\bar n}_{s}(C(Z))=0$ and $\mathrm{IH}^{\bar m}_{s}(C(Z))={\mathrm{IH}}^{\bar m}_{s}(Z)$. Here is because $W^{\bar n}_{s-1}(X)=W^{\bar m}_{s+1}(X)+U.$ and the element of $U$ is the cone of cycle in $W^{\bar m}_s(lk(\sigma,T))$.   

Consider the construction in previous section, we add $C(V_{lag})$ to ${W}^{ \bar{m}}_{s+1}(x)$, the non trivial homology element is from $C(V^{\perp}_{lag})$. That means $\widetilde{\mathrm{IH}}^{\bar m}_{s}(C(Z))$ should be $\mathrm{H}_{lag}$. We get the cone formula :
\begin{equation}
\widetilde{\mathrm{IH}}^{\bar m}_{i}(C(Z))=\widetilde{\mathrm{IH}}^{\bar n} _{i}(C(Z))=
\begin{cases}
\mathrm{IH}^{\bar m}_{i}(Z) \quad  i<s\\
\mathrm{H}_{lag}\in \mathrm{IH}^{m}_{s}(Z) \quad i=s \\
0 \quad  otherwise.
\end{cases}
\label{c2}    
\end{equation}
\begin{remark}
The property of the intersection homology is invariant after the normalization of pseudomanifold, so the self dual chain should satisfy this property. The new homology is not independent of stratification.
\end{remark}

\subsection{Diagonal Approximation for Modified Intersection Chain}

Diagonal approximation plays important role in Poincare dual map. However, the front $j$ face or back $n-j$
face of $(\bar p,i)$-allowable chain 
may not be $(\bar p,i)$-allowable or $(\bar p,n-j)$-allowable respectively, so the ordinary Whitney-Alexander diagonal approximation fails to exist in the intersection chain. In \cite{X} Apendix B
, there exists the diagonal approximation map for lower middle perversity  $W^{\bar{m}}_{*}(X)$ and upper middle perversity $W^{\bar{n}}_{*}(X)$ :
$$\bigtriangleup : W^{\bar{0}}_{*}(X)\to W^{\bar{n}}_{*}(X) \otimes W^{\bar{m}}_{*}(X).$$ 
with those properties: 
\begin{enumerate}
    \item diagonal approximation is natural in X, for the filtered simplicial complex and placid simplicial maps.
    \item   $\bigtriangleup (x) = x\otimes x $ for any $(\bar{0},0)-$ allowable simplex $x\in X $.
\end{enumerate}
Here $\bar{0}=0$ is the zero perversity, and the placid map is a stratum-preserving map for every pure stratum $T$, the codimension of every pure stratum in $f^{-1}(T)$ is no less than codimension of $T$.
Furthermore, this diagonal approximation is unique up to homotopy in the category of geometrically controlled. 

\begin{remark}
First we recall that for the zero perversity $\bar 0$, the minimal element of $W^{\bar{0}}_{k}(X)$ is modeled by $W^{\bar 0}_k(C(Z))$. Here Z is the intrinsic link of simplex $\sigma$. Hence it is natural to consider the link 
\end{remark}

The boundary map in tensor product of chains is : $$\partial(c\otimes c')=\partial c\otimes c'+(-1)^k c\otimes \partial c'.$$ So the tensor product of cycle is still a cycle. In this section, I will build the diagonal approximation $\bar\bigtriangleup$ in the new chain $\widetilde{W}^{\bar{m}}_{s}(X)$ and $\widetilde{W}^{\bar{n}}_{s+1}(X)$.

\begin{lemma}\label{diagonala}
There exists a diagonal approximation $\widetilde{\bigtriangleup}$ for the new chains $\widetilde{W}^{\bar{m}}_{j}(X)$ and $\widetilde{W}^{ \bar{n}}_{j}(X)$ in the meaning of \cite{X}. 
\begin{equation}\widetilde \bigtriangleup :W^{\bar{0}}_{*}(X)\to \widetilde{W}^{\bar{m}}_{*}(X)\otimes \widetilde{W}^{ \bar{n}}_{*}(X).\end{equation}\label{da1}
Moreover $\widetilde{\bigtriangleup}$ is unique up to chain homotopy.
\begin{proof}
The proof is standard method of acyclic models.
Because in article \cite{X} apendix B has defined the diagonal approximation map $$\bigtriangleup :W^{\bar{0}}_{*}(X)\to{W}^{\bar{m}}_{*}(X)\otimes{W}^{\bar{n}}_{*}(X).$$  
In order to define \ref{da1}, consider in \ref{mi1} that we just modified ${W}^{\bar{m}}_{s+1}(Y)$ and ${W}^{\bar{n}}_{s+1}(Y)$.  When the dimension $*$ is less than $s+1$, the diagonal approximation map $\widetilde{\bigtriangleup}$ is the original $\bigtriangleup$. Let us use the mathematical induction to deal with dimension from s+1 to 2s. The basis of $W^0_{s+1}(X)$ is modeled over the cone, it is enough to consider the case $W^0_{s+1}(C(Z))$ for some cone $C(Z)$.

For $s<j<2s+1$, because $\bigtriangleup_{j}$ should be a chain morphism, this means $$\partial(\bigtriangleup_{j}(\partial \omega))=0.$$ So $\bigtriangleup_{j}(\partial \omega)$ is a cycle. 
Because of the lemma \ref{l2}, we can observe that is a boundary of $$\exists \zeta\in (\widetilde{W}^{ \bar{n}}_{*}(C(Z)\otimes \widetilde{W}^{ \bar{n}}_{*}(C(Z))_{j+1},\  \text{s.t} \ \bigtriangleup_{i}(\partial \omega) =\partial \zeta.$$ So we can define $\bigtriangleup_{j+1}$ because it satisfy the chain map property $\partial \Delta_{i+1}(\omega)=\partial \zeta=\Delta_{i}(\partial \omega)$:
$$\bigtriangleup_{j+1}(\omega)=\zeta.$$ 

Next we prove the uniqueness up to homotopy. Let us assume there is another diagonal approximation $\widetilde\bigtriangleup' $. Then construct the homotopy $\{h_i\}$ between $\widetilde\bigtriangleup $ and $\widetilde\bigtriangleup' $ by using mathematical induction.
Because of $\widetilde\bigtriangleup_0 = \widetilde\bigtriangleup'_0.$
then we define $h_0=0$. If we have defined $h_i$ for all $i<s$, the chain homotopy $h_{i+1}:W_{i+1}^{\overline{0}}(X) \rightarrow(\widetilde{W}_{*}^{\bar{m}}(X) \otimes \widetilde{W}_{*}^{\bar{n}}(X))_{i+2} $ must meet : $$\partial h_{i+1}+h_{i} \partial =\Delta_{i+1}-\Delta_{i+1}^{\prime}.$$
When $\xi$ is a basis of $W_{i+1}^{\overline{0}}(X)$,  $\partial h_{i+1}\xi =(\Delta_{i+1}-\Delta_{i+1}^{\prime}-h_{i} \partial) \xi$ is a cycle. This is because 
$$\begin{aligned} & \partial\left(\Delta_{i+1}-\Delta_{i+1}^{\prime}-h_{i} \partial\right) \xi =\left(\Delta_{i} \partial-\Delta_{i}^{\prime} \partial-\partial h_{i} \partial\right) \xi \\=&\left(\Delta_{i} \partial-\Delta_{i}^{\prime} \partial-\left(\Delta_{i}-\Delta_{i}^{\prime}-h_{i-1} \partial\right) \partial\right) \xi = 0 \end{aligned}$$
Then due to lemma \ref{l2}, we know $\partial h_{i+1}\xi$ should be a boundary of $\zeta$ in $\widetilde{W}_{*}^{\bar{m}}(X) \otimes \widetilde{W}_{*}^{\bar{n}}(X))_{i+2}$. Then it is natural to define $h_{i+1}(\omega) =\zeta$. Then we define a chain homotopy ${h_j}$ between $\widetilde\bigtriangleup $ and $\widetilde\bigtriangleup' $.
\end{proof}
\end{lemma}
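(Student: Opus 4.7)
The plan is to adapt the acyclic models proof of \cite{X} Appendix B to the modified chain complexes, taking advantage of the fact that the modification only affects the middle dimension $s+1$. First, I would note that since $\widetilde{W}^{\bar{m}}_{j}(X) = W^{\bar{m}}_{j}(X)$ and $\widetilde{W}^{\bar{n}}_{j}(X) = W^{\bar{n}}_{j}(X)$ in all dimensions except $j = s+1$, the existing diagonal approximation of \cite{X} already provides $\widetilde{\bigtriangleup}_i$ for all $i \leq s$, with values in $\widetilde{W}^{\bar{m}}_{*}(X) \otimes \widetilde{W}^{\bar{n}}_{*}(X)$ automatically. So the construction is really only new in degrees $s+1$ through $2s$.

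For the inductive step, I would use naturality together with the standard fact (emphasized in \ref{lmofw}) that every basis element of $W^{\bar{0}}_{*}(X)$ is modeled by a cone $C(Z)$ via a placid simplicial map. This reduces the whole construction to the local case of a cone, where the cone formulas \eqref{c1} and \eqref{c2} combined with Künneth give us the homology of $\widetilde{W}^{\bar{m}}_{*}(C(Z)) \otimes \widetilde{W}^{\bar{n}}_{*}(C(Z))$ in each degree. Assuming the needed acyclicity in degrees $s+2$ through $2s+1$ (this is the content of the unstated lemma \ref{l2} the author invokes), the inductive step runs as usual: given a basis element $\omega$, the chain-map condition forces $\widetilde{\bigtriangleup}_{j+1}(\omega)$ to satisfy $\partial \widetilde{\bigtriangleup}_{j+1}(\omega) = \widetilde{\bigtriangleup}_{j}(\partial \omega)$, and the right-hand side is a cycle (by the induction hypothesis applied to $\partial^2 = 0$), hence a boundary by acyclicity; pick any bounding element as the value.

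For uniqueness up to chain homotopy, I would carry out the parallel inductive construction of a homotopy $h$ between two such diagonal approximations $\widetilde{\bigtriangleup}$ and $\widetilde{\bigtriangleup}'$. Starting from $h_0 = 0$ (since both maps agree on $W^{\bar{0}}_{0}$ by the normalization $\omega \mapsto \omega \otimes \omega$), one inductively defines $h_{j+1}$ on a basis element $\xi$ by noting that $(\widetilde{\bigtriangleup}_{j+1} - \widetilde{\bigtriangleup}'_{j+1} - h_{j} \partial)(\xi)$ is a cycle (routine computation using the induction hypothesis $\partial h_j + h_{j-1}\partial = \widetilde{\bigtriangleup}_j - \widetilde{\bigtriangleup}'_j$) and is therefore a boundary by the same acyclicity, whose choice of bounding chain defines $h_{j+1}(\xi)$.

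The main obstacle is not the formal induction but verifying the required acyclicity of $\widetilde{W}^{\bar{m}}_{*}(C(Z)) \otimes \widetilde{W}^{\bar{n}}_{*}(C(Z))$ in the degrees where the construction is non-automatic. One must check that neither the addition of $C(V_{lag}) + C(V_o)$ to $W^{\bar{m}}_{s+1}$ nor the removal of $C(V_{lag}^{\perp})$ from $W^{\bar{n}}_{s+1}$ introduces spurious homology in the Künneth range that could obstruct the extension. Since the modification is exactly arranged so that the new homology at the cone picks up only $\mathrm{H}_{lag}$ (cf.\ formula \eqref{c2}), and this is concentrated in a single degree, the Künneth computation should collapse to show vanishing in the required range; confirming this carefully via the complementary allowability pattern of $\widetilde{W}^{\bar{m}}$ vs.\ $\widetilde{W}^{\bar{n}}$ is the key technical point.
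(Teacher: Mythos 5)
Your proposal follows the same route as the paper: reduce via acyclic models and naturality to the cone case $C(Z)$, invoke an acyclicity statement for $\widetilde{W}^{\bar{m}}_{*}(C(Z)) \otimes \widetilde{W}^{\bar{n}}_{*}(C(Z))$ in the relevant degrees, and build both the map and the homotopy by the usual induction on degree, starting from the unmodified diagonal approximation in degrees $\leq s$. You correctly identify that the real content is the acyclicity step (this is the paper's Lemma~\ref{l2}).

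One caution, though, about how you expect that acyclicity to go: you say the Künneth computation ``should collapse to show vanishing'' because the new homology is concentrated in a single degree. That is not quite right. By the cone formula \eqref{c2} both $\widetilde{\mathrm{IH}}^{\bar{m}}_{s}(C(Z))$ and $\widetilde{\mathrm{IH}}^{\bar{n}}_{s}(C(Z))$ equal $\mathrm{H}_{lag}$, so Künneth gives a nonzero contribution $\mathrm{H}_{lag}\otimes\mathrm{H}_{lag}$ in degree $2s$; the tensor complex is \emph{not} acyclic there in the naive sense. The paper's Lemma~\ref{l2} handles this by a separate argument: the specific cycle one must bound, $\widetilde{\bigtriangleup}_{2s}(\partial\omega)$, arises by naturality from the diagonal approximation on $Z$ itself, and the Lagrangian property of $\mathrm{H}_{lag}$ under the intersection pairing shows that this cycle cannot produce a class of the form $h\otimes g$ with $h,g\in\mathrm{H}_{lag}$. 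So the inductive step works not because the full tensor complex is acyclic, but because the diagonal image avoids the troublesome $\mathrm{H}_{lag}\otimes\mathrm{H}_{lag}$ summand; your write-up should make that distinction explicit rather than leaning on Künneth collapse alone.
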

Application of acyclic method need to compute the homology. For any $i$-subcomplex of $Z\in X$,  $C(Z)\in X$ is a cone.  Assuming the diagonal approximation map exist for the link $Z$: $$\widetilde\bigtriangleup :W^{\bar{0}}_{*}(Z)\to \widetilde{W}^{\bar{m}}_{*}(Z)\otimes\widetilde{W}^{\bar{n}}_{*}(Z).$$ 
For the the chain complex  $\widetilde{W}^{\bar{m}}_{*}(Z)\otimes\widetilde{W}^{\bar{n}}_{*}(Z)$ in the image of $\widetilde\bigtriangleup(W^{\bar{0}}_{*}(Z))$, then  we have the lemma below:
\begin{lemma}\label{l2}
$$\mathrm{H_{k}}(\widetilde{W}^{\bar{m}}_{*}(C(Z))\otimes\widetilde{W}^{\bar{n}}_{*}(C(Z)))=0,$$ for $k\geq i$.

\begin{proof}
Because of algebraic Kunneth formula :
 $$\mathrm{H}_{k}(\widetilde{W}^{\bar{n}}_{*}(C(Z)\otimes \widetilde{W}^{ \bar{n}}_{*}(C(Z))=\bigoplus\limits_{i}\widetilde{\mathrm{IH}}^{\bar m}_{i}(C(Z)) \otimes\widetilde{\mathrm{IH}}^{\bar n}_{k-i}(C(Z)).$$ we can compute $\mathrm{H_{k}}(\widetilde{W}^{ \bar{n}}_{*}(C(Z))\otimes\widetilde{W}^{\bar{n}}_{*}(C(Z)))$. 
When dimension of $Z<2s$, we know cone formula \ref{c1} $\widetilde{\mathrm{IH}}^{\bar m}_{i}(C(Z))=0$ for $i\geq  j- \bar m(j+1)$ . 
So when $k\geq i$, we can find the $\mathrm{H}_k(\widetilde{W}^{ \bar{n}}_{*}(C(Z)\otimes \widetilde{W}^{ \bar{n}}_{*}(C(Z))=0$.

For the diemsion of $Z$ is $2s$, the cone formula \ref{c2} show that:  $$\widetilde{\mathrm{IH}}^{\bar m}_{i}(C(Z))=\widetilde{\mathrm{IH}}^{\bar n}_{i}(C(Z))=0 \quad \text{for} \ i> s.$$ For $k>2s$, we have $\mathrm{H_{k}}(\widetilde{W}^{ \bar{n}}_{*}(C(Z)\otimes \widetilde{W}^{ \bar{n}}_{*}(C(Z))=0.$ 

When $k=2s$, if $$\mathrm{H}_{2s}(\widetilde{W}^{ \bar{m}}_{*}(C(Z)\otimes \widetilde{W}^{ \bar{n}}_{*}(C(Z))\neq 0,$$ 
then the nonzero generator is from $\mathrm{\widetilde{IH}}^{ \bar{m}}_{s}(C(Z))\otimes \mathrm{\widetilde{IH}}^{ \bar{n}}_{s}(C(Z))$. Let us assume the generator is $h\otimes g$,  here $h,g\in \mathrm{H}_{lag}$. However, consider the 2s-dimensional diagonal approximation  $\widetilde\bigtriangleup_{2s}$ in $Z$ , because of the product in Lagrangian Structure of $\mathrm{IH}^{m}_{s}(Z)$, there does not exist $h\otimes g$ in the image of $\widetilde\bigtriangleup_{2s}$. That is impossible. So we get $\mathrm{H}_{2s}(\widetilde{W}^{ \bar{n}}_{*}(C(Z)\otimes \widetilde{W}^{ \bar{n}}_{*}(C(Z))=0$ . 
\end{proof}
\end{lemma}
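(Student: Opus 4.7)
The strategy is to apply the algebraic K\"unneth theorem to convert the question into one about the modified intersection homology of $C(Z)$, and then feed in the cone formulas \eqref{c1} and \eqref{c2} derived in the previous subsection.

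As a first step, since the coefficients form the field $\mathbb{C}$, the K\"unneth formula gives
$$\mathrm{H}_k\bigl(\widetilde{W}^{\bar{m}}_*(C(Z)) \otimes \widetilde{W}^{\bar{n}}_*(C(Z))\bigr) = \bigoplus_{p+q=k} \widetilde{\mathrm{IH}}^{\bar{m}}_p(C(Z)) \otimes \widetilde{\mathrm{IH}}^{\bar{n}}_q(C(Z)).$$
So I need only identify, for $k \ge i$, which bigraded pieces on the right can be nonzero.

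Next, I would split into two cases according to the dimension $i$ of $Z$. In the range $i < 2s$ the Witt condition still holds on $Z$, so the modified and unmodified chains coincide and the cone formula \eqref{c1} forces $\widetilde{\mathrm{IH}}^{\bar{m}}_p(C(Z))=0$ as soon as $p$ reaches $i-\bar m(i+1)$, with an analogous statement for the $\bar n$-factor. A direct bookkeeping then shows that for $p+q \ge i$ at least one of the two factors must vanish, yielding the claim. In the critical case $i=2s$, formula \eqref{c2} says $\widetilde{\mathrm{IH}}^{\bar m}_p(C(Z)) = \widetilde{\mathrm{IH}}^{\bar n}_p(C(Z)) = 0$ for $p > s$, which handles $k > 2s$ at once, leaving only the ``diagonal'' term $k=2s$ to worry about.

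At $k = 2s$ the only surviving K\"unneth summand is $\mathrm{H}_{lag} \otimes \mathrm{H}_{lag}$, where both factors come from the Lagrangian piece of $\mathrm{IH}^{\bar m}_s(Z)$. The key observation, which reflects the framing of the lemma as a statement about the chain complex realized inside the image of $\widetilde\bigtriangleup$, is that a simple tensor $h\otimes g$ with $h,g \in \mathrm{H}_{lag}$ cannot appear in this image: pairing two Lagrangian classes under the intersection form gives zero by the isotropy property recorded in \eqref{l1}, and this pairing is precisely what the $2s$-dimensional diagonal approximation $\widetilde\bigtriangleup_{2s}$ on the link $Z$ reproduces at the chain level. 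Hence the $k=2s$ contribution to the image subcomplex also vanishes, and the lemma follows.

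The hard part is the last step: verifying rigorously that the Lagrangian pairing obstruction really kills the $h\otimes g$ term inside the image of $\widetilde\bigtriangleup$. This amounts to tracking the interaction between the diagonal approximation and the intersection pairing through the Lagrangian decomposition $V=V_{lag}\oplus V_{lag}^{\perp}\oplus V_o$ from \eqref{eqv1} and \eqref{eqo}, and checking that the classes paired to produce $h \otimes g$ on the link $Z$ correspond to a pair of Lagrangian cycles whose intersection number is zero. Everything else reduces to formal manipulation once the cone formulas \eqref{c1} and \eqref{c2} are in hand.
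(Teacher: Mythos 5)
Your proposal follows essentially the same path as the paper's proof: apply the algebraic K\"unneth formula over $\mathbb{C}$, split into cases on $\dim Z$ versus $2s$, feed in the cone formulas \eqref{c1} and \eqref{c2}, and handle the boundary case $k = 2s$ by observing that a simple tensor $h \otimes g$ with both factors in $\mathrm{H}_{lag}$ cannot appear in the image of $\widetilde\bigtriangleup_{2s}$ because the Lagrangian (isotropic) subspace pairs trivially with itself while the diagonal approximation pairs classes with their duals. You also correctly flag the subtlety that the lemma must be read as a statement about the image subcomplex of $\widetilde\bigtriangleup$ (otherwise $\mathrm{H}_{lag}\otimes\mathrm{H}_{lag}$ genuinely survives at $k=2s$), which matches the paper's framing in the sentence preceding the lemma.
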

\subsection{Poincare Duality}
The original Poincare duality proof is based on the dual cell decomposition. In order to prove the Poincare duality is a chain equivalence in geometrically controlled category, we need see $\widetilde{W}^{ \bar{n}}_{*}(X)$ as the chain complex of geometric $\mathbb{C}-$ module \ref{geomodule} in this section.  


In pseudomanifold the dual cell structure $D(\sigma')$ is unique piecewise linear homomorphism to $\hat\sigma *\partial D(\sigma')= \hat\sigma*lk(\sigma, T')$.  If we decompose it further, we get $lk(\sigma, T')=S^k*L(X)$. $L(X)$ is intrinsic link unique up to piecewise linear homomorphism. 

With help of diagonal approximation, we can define the cap product for the chain complex $\widetilde{W}_{ \bar{n}}^{j}(X)$ of $n-$ dimensional $X$ by that:
\begin{equation}
   \widetilde{\cap}:\widetilde{W}_{ \bar{n}}^{j}(X)\otimes {W}^{0}_{n}(X) \xrightarrow{i \otimes \widetilde\bigtriangleup} {W}_{ \bar{n}}^{j}(X)\otimes (\widetilde{W}^{\bar{n}}_{*}(X) \otimes \widetilde{W}^{\bar{m}}_{*}(X))\xrightarrow{\varepsilon\otimes1}\widetilde{W}^{\bar{m}}_{n-j}(X). 
\end{equation}
here the $\varepsilon : \widetilde{W}^{\bar{n}}_{*}(X) \otimes \widetilde{W}_{\bar{n}}^*(X)\to \mathbb{C}$ is the evaluation map. Similar with manifold, we define the Poincare dual map $\widetilde{\mathbb{P}}$ is cap product with fundamental class $[X]$:
\begin{equation}
  \widetilde{\mathbb{P}}:=-\widetilde{\cap}[X]: \widetilde{W}_{\bar{m}}^{j}(X) \rightarrow \widetilde{W}_{n-j}^{\bar{n}}(X)  
\end{equation}

Before proving the Poincare duality, we need to prove the Mayer-Vietories sequence by using $\widetilde{W}^{\bar{m}}_{s}(X', T')$ instead of $ 
\widetilde{W}^{\bar{m}}_{s}(X, T)$. 
This is because the double subdivision $T''$ can give the definition of the boundary of two link .   

We know vertex of $T'$ is the barycenter of $\sigma$ in $T$, the simplicial structure of $(\mathrm{St}\hat\sigma,T'')$ is indentical $C(lk(\hat\sigma, T'))$ .
It is natural to use the filtration on the cone and cone formula defined before to study $\widetilde{W}_{i}^{\bar m}(X)|_{\mathrm{St} (\hat{\sigma},T'')}$. 
In particular, $\widetilde{W}_{i}^{\bar m}(X)|_{\mathrm{St} (\hat{\sigma},T'')}=\widetilde{W}_{i}^{\bar{m}}(C(lk(\hat\sigma, T')))$
\begin{lemma}
(Mayer-Vietories sequence) Assume $X$ is a non Witt space in \ref{nonwi1} with the Lagrange structure in the link of $X_0$ . $Y_1$ and $Y_2$ are closed subpseudomanifold of $X$, and $X=Y_1 \cup Y_2$. Then we have the short exact sequence:
$$0\to\widetilde{W}^{\bar{m}}_{*}(Y_1\cap Y_2)\stackrel{\iota\oplus\iota}\longrightarrow  \widetilde{W}^{\bar{m}}_{*}(Y_1)\oplus \widetilde{W}^{\bar{m}}_{*}(Y_2)\stackrel{\iota-\iota}\longrightarrow \widetilde{W}^{\bar{m}}_{*}(X)\to 0$$
\end{lemma}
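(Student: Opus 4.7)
The plan is to bootstrap from the classical Mayer--Vietoris sequence for the ordinary intersection chains $W^{\bar m}_*$ to the modified chains $\widetilde{W}^{\bar m}_*$, using the fact that the modification is by a direct-sum addition of chains localized at isolated singular points in $X_0$.

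As the paragraph preceding the lemma already suggests, I would work throughout with the double barycentric subdivision $T''$, which has the property that every open simplex of $X$ is contained in at least one of $Y_1, Y_2$. In this triangulation the classical simplicial Mayer--Vietoris argument (adapted to perversity-$\bar m$ allowable chains, since allowability is a pointwise condition) gives
\begin{equation*}
W^{\bar m}_*(X) = W^{\bar m}_*(Y_1) + W^{\bar m}_*(Y_2), \qquad W^{\bar m}_*(Y_1) \cap W^{\bar m}_*(Y_2) = W^{\bar m}_*(Y_1 \cap Y_2),
\end{equation*}
so that the analogue of the claimed short exact sequence with $\widetilde W$ replaced by $W$ holds.

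Next I would pass to the modified chains via the direct-sum decomposition
\begin{equation*}
\widetilde W^{\bar m}_{s+1}(X) = W^{\bar m}_{s+1}(X) \oplus C(V_{\mathrm{lag}}) \oplus C(V_o),
\end{equation*}
which comes from $U$ being a complement of $W^{\bar m}_{s+1}(X)$ in $W^{\bar n}_{s+1}(X)$, together with the definition $\widetilde W^{\bar m}_*(Y_j) = W^{\bar m}_*(Y_j) \cap \widetilde W^{\bar m}_*(X)$. In all degrees $\neq s+1$ we have $\widetilde W^{\bar m}_* = W^{\bar m}_*$, so the claim reduces to ordinary Mayer--Vietoris. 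In degree $s+1$ each extra basis element $\tau_i * v_i$ is supported on $\overline{\mathrm{St}}(\tau_i, T'')$ for a fixed singular point $\tau_i \in X_0$, so membership in $\widetilde W^{\bar m}_{s+1}(Y_j)$ is governed by whether $\tau_i * v_i$ is itself supported in $Y_j$.

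Finally I would verify the three exactness conditions. Injectivity of $\iota \oplus \iota$ is immediate from injectivity of the inclusions of simplicial chain groups. Middle exactness follows because a chain in $\widetilde W^{\bar m}_*(X)$ supported simultaneously in $Y_1$ and $Y_2$ is supported in $Y_1 \cap Y_2$, and membership in $\widetilde W^{\bar m}_*$ is preserved by this restriction by definition. Surjectivity is obtained by first splitting the ordinary summand via the classical argument and then distributing each extra basis element $\tau_i * v_i$ to whichever $Y_j$ contains its closed star. The main obstacle I anticipate is the case in which a singular point $\tau_i$ lies on the overlap $Y_1 \cap Y_2$ but its star $\overline{\mathrm{St}}(\tau_i, T'')$ is split between $Y_1$ and $Y_2$: then the link of $\tau_i$ in $Y_j$ is strictly smaller than in $X$, and the chosen Lagrangian representative $v_i \in V_{\mathrm{lag}}$ for $X$ need not restrict to a Lagrangian representative for either $Y_j$. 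Resolving this requires either that $Y_1 \cap Y_2$ already contain an open neighborhood of every such $\tau_i$ (so the links agree), or a compatibility hypothesis on the Lagrangian structures along the cover, under which a decomposition $v_i = v_i^{(1)} + v_i^{(2)}$ inside $V_{\mathrm{lag}} + V_o$ is available and yields the required splitting $\tau_i * v_i = \tau_i * v_i^{(1)} + \tau_i * v_i^{(2)}$ inside $\widetilde W^{\bar m}_*(Y_1) + \widetilde W^{\bar m}_*(Y_2)$.
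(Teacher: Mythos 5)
The paper does not actually give a proof of this lemma: after stating it, the author only remarks that the inclusion $\iota: Y_1 \to X$ is not placid and that one should ``cover $X$ by images of cones,'' then moves on to Lemma~\ref{p1}. Your proposal therefore cannot be compared against a written-out argument, but it engages seriously with the question and in fact surfaces precisely the issue the paper's remark is gesturing at. Your setup is sound: the modification $\widetilde W^{\bar m}_* \neq W^{\bar m}_*$ is concentrated in degree $s+1$, so everything reduces to tracking the summand $C(V_{\mathrm{lag}}) \oplus C(V_o)$, whose generators $\tau_i * v_i$ are supported on stars of the isolated singular vertices $\tau_i \in X_0$; and with the definition $\widetilde W^{\bar m}_*(Y) = W^{\bar m}_*(Y) \cap \widetilde W^{\bar m}_*(X)$ in hand, injectivity and middle exactness are as you say.

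The substantive issue is exactly the one you flag under surjectivity. If a singular vertex $\tau_i$ lies in $Y_1 \cap Y_2$ but $\overline{\mathrm{St}}(\tau_i,T'')$ is not contained in either $Y_j$, then the generator $\tau_i * v_i \in \widetilde W^{\bar m}_{s+1}(X)$ has no reason to split as a sum of modified chains supported in $Y_1$ and $Y_2$: the link of $\tau_i$ inside $Y_j$ is a proper subcomplex of $L(\tau_i)$, the Lagrangian subspace $V_{\mathrm{lag}}$ is defined relative to the full link, and a representative cycle $v_i$ will generally not decompose over the two pieces while staying in $V_{\mathrm{lag}} + V_o$. As stated, the lemma silently assumes this does not happen. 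In the paper's intended application --- Lemma~\ref{p1}, where the Mayer--Vietoris cover is by closed stars $Y = \overline{\mathrm{St}}(\hat\sigma, T'')$ and their complements --- each singular vertex does sit in the interior of exactly one piece of the cover, so the obstruction vanishes; but that restriction on the cover is a hypothesis the lemma should carry, and you are right to insist on either that geometric condition or a compatibility condition on the Lagrangian data along $Y_1 \cap Y_2$. Your analysis is more careful than what the paper provides and correctly diagnoses a gap that the paper leaves implicit.
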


The question is the inclusion map from $ \iota :Y_1 \to X$ is not a placid map because the codimension of $\iota ^{-1}$ of pure stratum in boundary.
Instead, we need to cover X by images of cones.  
Because we only change the $s+1$ dimensional chain, it is enough to consider the inclusion map of subcomplex involve stratum with non Witt condition. 

\begin{lemma} \label{p1}
If $X$ is the $n$ dimensional oriented Non Witt space defined in \ref{nonwi1} with the Lagrange structure in the link of $X_0$,  the generalized Poincare duality map $\widetilde{\mathbb{P}}$ \ from  \ $\widetilde{W}_{\bar{m}}^{*}(X)$ to $\widetilde{W}^{ \bar{n}}_{n-*}(X)$ 
$$\widetilde{\mathbb{P}}:=-\widetilde{\cap} [X]:\widetilde{W}_{\bar{m}}^{j}(X)\to\widetilde{W}^{\bar{n}}_{n-j}(X),$$
is a geometrically controlled chain equivalence.
\begin{proof} 
 We use the mathematical induction of dimension $i$ to prove this lemma. Dimension 0 case is clear. Suppose the Poincare duality holds for the $i<k$ dimension. Then consider dimension $k+1$, It is enough to show Poincare duality hold for $X$ is $k+1$ dimension pseudomanifold. 
 
 The standard proof is based on Mayer-Vietories argument, and the local dual map is guaranteed by Lagrange structure of the cone. 
 It is enough to prove when $\widetilde{\mathbb{P}}|_{\partial Y} $ is a chain equivalence then the relative chain map $\widetilde{\mathbb{P}}= -\widetilde{\cap} [Y]$
 $$\widetilde{\mathbb{P}}:\widetilde{\mathrm{IH}}_{m}^i(Y,\partial Y)\to \widetilde{\mathrm{IH}}^{\bar n}_{k+1-i}(Y).$$ is chain equivalence. Here the relative cap product are still defined as the composition above.
 \begin{equation}
\widetilde{\cap}:\widetilde{W}_{ \bar{n}}^{j}(Y,\partial Y)\otimes {W}^{\bar 0}_{n}(Y) \xrightarrow{i \otimes \widetilde\bigtriangleup} {W}_{ \bar{n}}^{j}(Y,\partial Y)\otimes (\widetilde{W}^{\bar{n}}_{*}(Y) \otimes \widetilde{W}^{\bar{m}}_{*}(Y))\xrightarrow{\varepsilon\otimes1}\widetilde{W}^{\bar{m}}_{n-j}(Y). 
 \end{equation}
 Suppose $Y$ is a star of simplex $\sigma,$ $Y=\hat \sigma * \partial (Y)$. 
 By assumption: $$\widetilde{\mathbb{P}}=-\widetilde{\cap} [\partial Y] :\widetilde{W}_j^{\bar m}(\partial Y ) \to \widetilde{W}^j_{\bar n}(\partial Y) $$  is chain equivalence for $\partial Y$. 
 Assume that $\omega$ and $\theta$ are basis of $\widetilde{\mathrm{IH}}^{\bar m}_{*}(\partial Y)$ and $\widetilde{\mathrm{IH}}^{\bar n}_{*}(\partial Y)$ respectively. $x$ and $y$ are basis of $\widetilde{W}^{\bar m}_{*}(\partial Y)$ and $\widetilde{W}^{\bar n}_{*}(\partial Y)$. 
 The image of $\widetilde\bigtriangleup[\partial Y]$ can be written as : 
 $$\widetilde\bigtriangleup[\partial Y]=\sum_{|\omega|+|\theta|=k-1} \omega \otimes \theta+\partial\left(\sum_{|x|+|y|=k} a_{x y} x \otimes y\right)$$

The first target is to prove $\widetilde\bigtriangleup[\partial Y]$ is the boundary of chain complex in .  It depends on the cone formula. 

When the dimension $k<2s$,  we use the original cone formula \ref{c1}, that means \begin{equation}
\widetilde{\mathrm{IH}}^m_{i}(Y)=
\begin{cases}
\mathrm{IH}^{m}_{i}(\partial Y) \quad  i<k-1-\bar m(k)  \\
0 \quad  otherwise.
\end{cases}\end{equation}
When $k=2s$ , we can use the cone formula such that :
\begin{equation}
\widetilde{\mathrm{IH}}^m_{i}(Y)=
\begin{cases}
\mathrm{IH}^{m}_{i}(Y) \quad  i<s\\
\mathrm{H}_{lag}\in \mathrm{IH}^{m}_{s}(Y) \quad i=s \\
0 \quad  otherwise.
\end{cases}   
\end{equation}
Because we know $|\omega|+|\theta|=k-1$,  if the dimension  of $\omega$ is $i\neq s$,  it is clear that only one of 
$\omega$ or $\theta$ is a boundary. When the dimension of $\omega$ is $s$, we know that Lagrange structure of $\partial Y$ make only one element in $\mathrm{H}_{lag}$ or $\mathrm{H}_{lag}^{\perp}$ is a boundary. 

If $\omega$ is a boundary, then let $\bar\omega$ be a chain such that $\partial\bar\omega = \omega$; if $\omega$ is not a boundary, then define $\bar\omega = \omega$. Define $\bar\theta$ in a similar way. In the chain complex $\widetilde{\mathrm{IH}}^{\bar m}_{*}( Y)\otimes\widetilde{\mathrm{IH}}^{\bar n}_{*}(Y)$  ), we have
$$ \sum_{|\omega|+|\theta|=k-1} \omega \otimes \theta=\partial\left(\sum_{|\widetilde{\omega}|+|\widetilde{\theta}|=k} \widetilde{\omega} \otimes \widetilde{\theta}\right).$$

Moreover, we get $\widetilde\bigtriangleup[\partial Y]$ is the boundary of $\sum \bar{\omega} \otimes \bar{\theta}+\sum a_{x y} x \otimes y$.
Because of the proposition of chain map  $\partial \widetilde\bigtriangleup[Y]=\widetilde\bigtriangleup[\partial Y]$. It concludes that the $\partial \widetilde\bigtriangleup[Y]$ is a boundary of $\sum \bar{\omega} \otimes \bar{\theta}+\sum a_{x y} x \otimes y$.

Next we combine the homology of lemma \ref{l5} to get :
$$ \widetilde\bigtriangleup[Y]=\sum_{|\bar\omega|+|\bar\theta|=j+1} \bar\omega \otimes \bar\theta+\sum_{|x|+|y|=j+1} a_{x y} x \otimes y $$
Then we can define the duality operator $\widetilde{\mathbb{P}}: \widetilde W_{\bar{p}}^{i}(Y,\partial Y) \rightarrow \widetilde W_{n-i}^{\bar{q}}(Y)$. 
Because of lemma  \ref{l2}, this map is the homology isomorphism. Because the quasi-isomorphism of free chain complex is chain equivalence, the dual map $\widetilde{\mathbb{P}}$ is chain equivalence. 

If we see the Link without Witt condition, the the symmetry of Lagrange structure make dual map exist on higher dimension. That is when $v\in V_{lag}^{\perp}$, then $\widetilde{\mathbb{P}}(v)\in C(V_{lag})$.  And when $v\in V_{lag}$, then $\widetilde{\mathbb{P}}(v) \in C(V_{lag}^{\perp})$.   

Recall Higson and Roe's argument in \cite{HR2} section 4, Mayer-Vietories sequence argument plays an important role to prove the Poincare dual map is geometrically controlled chain equivalence in bounded geometry combinatorial manifold. Similar, we can focus the star of a simplex $\sigma$ named $Y$ in $X$ to prove Poincare duality in geometrical control category because the map in the star of simplex is geometrically controlled. 

\end{proof}
\end{lemma}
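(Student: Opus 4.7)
The plan is to reduce this to a local statement on stars of simplices and then apply a Mayer--Vietoris induction, following the scheme Higson--Roe used in \cite{HR2} for manifolds and Higson--Xie used in \cite{X} for Witt spaces. By Theorem \ref{landg}, since $\widetilde{W}^{\bar{m}}_{*}(X)$ and $\widetilde{W}^{\bar{n}}_{*}(X)$ are chain complexes of geometric $(\mathbb{C},X)$-modules with $\widetilde{\mathbb{P}}$ geometrically controlled, it suffices to prove that the diagonal part of $\widetilde{\mathbb{P}}$ is a chain equivalence. The diagonal part over a simplex $\sigma$ is precisely the relative Poincar\'e duality map on the star $Y=\overline{\mathrm{St}}(\hat\sigma, T'')\cong \hat\sigma * \mathrm{lk}(\hat\sigma,T')$, so everything reduces to showing $\widetilde{\mathbb{P}}$ is a (relative) chain equivalence on cones.

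I would set up an induction on $n=\dim X$. The case $n=0$ is trivial. For the inductive step, I first establish the Mayer--Vietoris short exact sequence for $\widetilde{W}^{\bar m}_{*}$ on covers by stars, which follows from the definition together with the observation that the modification only happens in middle degree $s+1$ and hence inherits the Mayer--Vietoris property from $W^{\bar m}_{*}$. Then, for a star $Y=C(Z)$ with $Z=\mathrm{lk}(\hat\sigma,T')$, the inductive hypothesis gives $\widetilde{\mathbb{P}}|_{\partial Y}=\widetilde{\mathbb{P}}|_{Z}$ a chain equivalence. A standard five-lemma on the long exact sequences of the pairs $(Y,\partial Y)$ then propagates this to $Y$, so the key remaining content is the local claim that
\begin{equation*}
\widetilde{\mathbb{P}}:\widetilde{\mathrm{IH}}^{j}_{\bar m}(Y,\partial Y)\to \widetilde{\mathrm{IH}}^{\bar n}_{n-j}(Y)
\end{equation*}
is an isomorphism for cones $Y=C(Z)$.

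To prove the local claim, I would choose a fundamental class $[Y]$ with $\partial[Y]=[\partial Y]$ and expand $\widetilde{\Delta}[\partial Y]$ using the K\"unneth decomposition as
\begin{equation*}
\widetilde{\Delta}[\partial Y]=\sum_{|\omega|+|\theta|=n-1}\omega\otimes\theta+\partial\Bigl(\sum_{|x|+|y|=n}a_{xy}\, x\otimes y\Bigr).
\end{equation*}
The cone formulas \ref{c1} and \ref{c2} say $\widetilde{\mathrm{IH}}^{\bar m}_i(Y)$ vanishes above the middle degree except possibly at $i=s$, where only the Lagrangian summand $\mathrm{H}_{\mathrm{lag}}$ survives. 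Thus for each pair $(\omega,\theta)$ with $|\omega|+|\theta|=n-1$, at least one of $\omega,\theta$ must bound in $\widetilde{\mathrm{IH}}^{*}_{*}(Y)$; in the critical degree $|\omega|=s$ this is exactly where the Lagrangian splitting $V=V_{\mathrm{lag}}\oplus V_{\mathrm{lag}}^{\perp}\oplus V_o$ is used: one of $\omega,\theta$ lies in $\mathrm{H}_{\mathrm{lag}}$ (killed by construction in $\widetilde{W}^{\bar n}_*(Y)$) and the other in $\mathrm{H}_{\mathrm{lag}}^{\perp}$ (killed in $\widetilde{W}^{\bar m}_*(Y)$). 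Combined with Lemma \ref{l2} (acyclicity of $\widetilde{W}^{\bar m}_{*}(Y)\otimes\widetilde{W}^{\bar n}_{*}(Y)$ in high degrees), this lets me lift $\widetilde{\Delta}[\partial Y]$ to a cycle of the form $\widetilde{\Delta}[Y]$, and the cap product with $[Y]$ then induces the desired isomorphism on the relative homology of the pair.

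The main obstacle is the middle-degree case $n-1=2s$: here the cone formula only kills one Lagrangian half, not the whole middle intersection homology of $Z$, so the naive cancellation argument used for manifolds fails. The resolution is exactly the point of the refined chains $\widetilde{W}^{\bar m}_*$ and $\widetilde{W}^{\bar n}_*$: the self-duality of the chosen Lagrangian decomposition forces $\widetilde{\mathbb{P}}(V_{\mathrm{lag}})\subset C(V_{\mathrm{lag}}^{\perp})$ and $\widetilde{\mathbb{P}}(V_{\mathrm{lag}}^{\perp})\subset C(V_{\mathrm{lag}})$, and the complementary halves are precisely those deleted/added in $\widetilde{W}^{\bar n}_{s+1}$ and $\widetilde{W}^{\bar m}_{s+1}$. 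Verifying this compatibility in terms of basis elements modeled on cones (Lemma \ref{lmofw}) is the technical heart of the argument; once established, the geometric-control conclusion is automatic because each of the isomorphisms $\widetilde{\mathbb{P}}$, $\widetilde{\Delta}$, and the evaluation $\varepsilon$ is built from geometrically controlled maps supported near $\hat\sigma$.
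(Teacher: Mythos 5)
Your proposal follows essentially the same line as the paper: induction on dimension, Mayer--Vietoris reduction to stars (i.e.\ cones), the K\"unneth expansion of $\widetilde\Delta[\partial Y]$, the cone formulas in Lemmas~\ref{c1}--\ref{c2}, the acyclicity Lemma~\ref{l2}, and the Lagrangian splitting in middle degree. Your explicit invocation of Theorem~\ref{landg} (Roe's global--local principle) to justify the reduction to the diagonal part is a slightly cleaner way of phrasing what the paper does by quoting the Higson--Roe Mayer--Vietoris argument from \cite{HR2}.

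One bookkeeping slip to watch: you write that $\mathrm{H}_{\mathrm{lag}}$ is killed in $\widetilde W^{\bar n}_*(Y)$ and $\mathrm{H}_{\mathrm{lag}}^{\perp}$ is killed in $\widetilde W^{\bar m}_*(Y)$, suggesting complementary halves die on the two sides. But by Definition~\ref{mi1}, on the cone one has $\widetilde W^{\bar m}_{s+1}(C(Z)) = W^{\bar m}_{s+1} + C(V_{\mathrm{lag}}) + C(V_o) = W^{\bar n}_{s+1} - C(V_{\mathrm{lag}}^{\perp}) = \widetilde W^{\bar n}_{s+1}(C(Z))$: the two modified chains literally coincide in degree $s+1$, so the same half $\mathrm{H}_{\mathrm{lag}}$ is coned off in both and $\mathrm{H}_{\mathrm{lag}}^{\perp}$ survives in both (consistent with the paper's cone formula $\widetilde{\mathrm{IH}}^{\bar m}_i(C(Z))=\widetilde{\mathrm{IH}}^{\bar n}_i(C(Z))$). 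The correct mechanism is: if $\omega\otimes\theta$ appears in $\widetilde\Delta[\partial Y]$ with $|\omega|=|\theta|=s$, then $\langle\omega,\theta\rangle\neq 0$, so by the Lagrangian property $\omega$ and $\theta$ cannot lie in the same Lagrangian half; hence exactly one lies in the coned-off half $\mathrm{H}_{\mathrm{lag}}$ and bounds in $C(Z)$. Your conclusion is right, but the stated justification should be adjusted.
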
 
When we combine the lemma \ref{p1} and \ref{l3} together. For consistent with other notation in \cite{HR1}, we use $b$ for the boundary map of $\widetilde{W}^{\bar{n}}_{i}(X)$, and $b^*$ is adjoint of $b$ which is the differential of $\widetilde{W}_{\bar{n}}^{i}(X)$. Consider the diagram below:

 \begin{tikzcd}
& & &... \arrow{d} & ...  \arrow{d}{} &... \arrow{d}{}\\
 & & &\widetilde{W}_{\bar{n}}^{i}(X) \arrow{r}{\iota^{*}_{i}} \arrow[swap]{d}{b^{*}_{i+1}}
& \widetilde{W}_{\bar{m}}^i(X)\arrow{d}{b^{*}_i} \arrow{r}{\cap [X]} & \widetilde{W}^{\bar{n}}_{k-i}(X) \arrow{d}{b_i} \\
& & &\widetilde{W}_{\bar{n}}^{i+1}(X) \arrow{r}{\iota^{*}_{i+1}} \arrow{d}{}
& \widetilde{W}_{\bar{m}}^{i+1}(X) \arrow{d}{} \arrow{r}{\cap [X]} &  \widetilde{W}^{\bar{n}}_{k-i-1}(X)  \arrow{d}{} \\
& & & ... & ... &...
\end{tikzcd}\\

We know the $\widetilde{W}_{\bar{n}}^*(X)$ and $\widetilde{W}_{\bar{m}}^*(X)$ are chain equivalent, and the cap product with $[X]$ introduce chain equivalence. 
So $\widetilde{W}_{\bar{n}}^*(X)$ and $\widetilde{W}^{\bar{n}}_{k-*}(X)$ are chain equivalent.

When $X$ is a Poincare pseudomanifold, it is natural to define a Hilbert Poincare complex on $X$. Because we know the basis of chain complex $\widetilde{W}^{\bar{n}}_*$ give a inner product. We identify $(\widetilde{W}^{\bar{n}}_i,b^*)$ and $(\widetilde{W}_{\bar{n}}^{i},b)$ under this inner product. Let $\widetilde{\mathbb{P}}^{*}$ be the adjoint of $\widetilde{\mathbb{P}}$. We have two lemma below.
\begin{lemma}\label{pdp}
 $\widetilde{\mathbb{P}}^*$ is chain homotopy to  $(-1)^{i(n-i)} \widetilde{\mathbb{P}}$ in the geometrically controlled category. The two maps
 $$\widetilde{\mathbb{P}}:\left(\widetilde{W}_{\bar{n}}^{i}(X), b^{*}\right) \stackrel{\widetilde{\cap}[X]}{\longrightarrow}\left(\widetilde{W}_{n-i}^{\bar{n}}(X), b\right)$$
 and 
 $$\widetilde{\mathbb{P}}^{\prime}:\left(\widetilde{W}_{\bar{n}}^{i}(X), b^{*}\right) \stackrel{\widetilde\cap[X]}{\longrightarrow}\left(\widetilde{W}_{n-i}^{\bar{m}}(X), b\right) \stackrel{\iota}{\rightarrow}\left(\widetilde{W}_{n-i}^{\bar{n}}(X), b\right)$$
 are chain homotopic in the geometrically controlled category.
 \begin{proof}
The proof is similar with the Appendix C of \cite{X}, and all homotopy is from the unique of diagonal approximation. If we define $T: \widetilde{W}_{*}^{\bar{n}}(X) \otimes \widetilde{W}_{*}^{\bar{n}}(X) \rightarrow \widetilde{W}_{*}^{\bar{n}}(X) \otimes \widetilde{W}_{*}^{\bar{n}}(X)$ is chain isomorphism by :
\[T(x \otimes y)=(-1)^{|x||y|} y \otimes x\]
For $\alpha \in \widetilde{W}_{\bar{n}}^{i}(X)$, we can find that 
$\widetilde{\mathbb{P}}(\alpha)=\varepsilon(\alpha \otimes \widetilde{\Delta}[X])$ and  $\widetilde{\mathbb{P}}^{*}(\alpha)=\varepsilon(\alpha \otimes(T \circ \widetilde{\Delta})[X])$. Clearly, $T \circ \widetilde{\Delta}$ is still a diagonal approximation and $T \circ \widetilde{\Delta}$ is unique up to chain homotopy because of lemma \ref{diagonala}. Then $\widetilde{\mathbb{P}}^*$ is chain homotopy to  $(-1)^{i(n-i)} \widetilde{\mathbb{P}}$.

For the second statement, we just construct two diagonal approximation $\Delta: W_{*}^{\overline{0}}(X) \rightarrow \widetilde{W}_{*}^{\bar{n}}(X) \otimes \widetilde{W}_{*}^{\bar{n}}(X)$ and $\Delta: W_{*}^{\overline{0}}(X) \rightarrow \widetilde{W}_{*}^{\bar{m}}(X) \otimes \widetilde{W}_{*}^{\bar{n}}(X)$. It is clear $\Delta$ and $(1 \otimes \iota) \circ \Delta^{\prime}$ are homotopy equivalent diagonal approximation maps from $W_{*}^{\overline{0}}(X)$ to $\widetilde{W}_{*}^{\bar{n}}(X) \otimes \widetilde{W}_{*}^{\bar{n}}(X)$. Then we know $\widetilde{\mathbb{P}}^{\prime}(\alpha)=\varepsilon\left(\alpha \otimes(1 \otimes \iota) \circ \Delta^{\prime}[X]\right)$ and $\widetilde{\mathbb{P}}(\alpha)=\varepsilon(\alpha \otimes \Delta[X])$ by 
construction of Poincare map, it finish the proof. 
 \end{proof}
\end{lemma}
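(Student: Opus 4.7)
The plan is to reduce both assertions of the lemma to the uniqueness up to chain homotopy of the diagonal approximation $\widetilde{\Delta}$ supplied by Lemma~\ref{diagonala}. The underlying principle is that any two chain maps built by pairing a cochain against a diagonal approximation evaluated at $[X]$ must agree up to a geometrically controlled chain homotopy whenever the underlying diagonal approximations do. So my overall strategy is to exhibit the two maps of each statement as arising from two different diagonal approximations with the same target, and then invoke Lemma~\ref{diagonala}.

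For the first statement, I would begin by expressing the cap product in the form $\widetilde{\mathbb{P}}(\alpha) = \varepsilon(\alpha \otimes \widetilde{\Delta}[X])$, where $\varepsilon$ denotes evaluation of a cochain against the first tensor factor. Introducing the Koszul flip $T(x \otimes y) = (-1)^{|x||y|} y \otimes x$ on $\widetilde{W}^{\bar{n}}_*(X)\otimes \widetilde{W}^{\bar{n}}_*(X)$, one verifies directly from the definition of the adjoint that $\widetilde{\mathbb{P}}^*(\alpha)=\varepsilon(\alpha \otimes (T\circ \widetilde{\Delta})[X])$, with the sign $(-1)^{i(n-i)}$ arising precisely from moving the degree-$i$ cochain $\alpha$ past the degree-$(n-i)$ factor. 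Since $T \circ \widetilde{\Delta}$ continues to be a chain map, to be natural in $X$, and to satisfy $x \mapsto x \otimes x$ on $(\bar{0},0)$-allowable simplices, it is another diagonal approximation in the sense of Lemma~\ref{diagonala}. The uniqueness clause of that lemma then furnishes a geometrically controlled chain homotopy between $T\circ\widetilde{\Delta}$ and $\widetilde{\Delta}$; pairing this homotopy with $\alpha$ and evaluating at $[X]$ produces the desired chain homotopy between $\widetilde{\mathbb{P}}^*$ and $(-1)^{i(n-i)}\widetilde{\mathbb{P}}$.

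For the second statement, the strategy is to present $\widetilde{\mathbb{P}}$ and $\widetilde{\mathbb{P}}'$ as arising from two distinct diagonal approximations both landing in $\widetilde{W}^{\bar{n}}_*(X)\otimes \widetilde{W}^{\bar{n}}_*(X)$. The direct map uses an approximation $\Delta\colon W^{\bar{0}}_*(X)\to \widetilde{W}^{\bar{n}}_*(X)\otimes \widetilde{W}^{\bar{n}}_*(X)$, while the composite map uses $(1\otimes \iota)\circ \Delta'$, with $\Delta'\colon W^{\bar{0}}_*(X)\to \widetilde{W}^{\bar{m}}_*(X)\otimes \widetilde{W}^{\bar{n}}_*(X)$. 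A direct check shows that both composites satisfy the defining properties of a diagonal approximation into $\widetilde{W}^{\bar{n}}_*(X)\otimes \widetilde{W}^{\bar{n}}_*(X)$, whence Lemma~\ref{diagonala} yields a geometrically controlled chain homotopy between them. Evaluating on $[X]$ and pairing against $\alpha\in \widetilde{W}_{\bar{n}}^i(X)$ transports this to the required chain homotopy between $\widetilde{\mathbb{P}}$ and $\widetilde{\mathbb{P}}'$.

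The main obstacle, and the step that deserves the most care, is ensuring \emph{geometric control} throughout. The acyclic-models proof of Lemma~\ref{diagonala} produces homotopies inductively, and one must verify at each step that the chosen chain bounding $\bigl(\widetilde{\Delta}_{i+1}-(T\circ \widetilde{\Delta})_{i+1}-h_i\partial\bigr)\xi$ can be selected with finite propagation near the basis element $\xi$. Once that is in hand, the remaining operators $\varepsilon(\alpha\otimes -)$, $T$, and $1\otimes \iota$ are all manifestly finite-propagation, and the geometric control propagates automatically. Aside from this, the only subtlety is clean sign bookkeeping, to confirm that $(-1)^{i(n-i)}$ is the sole sign produced by a single Koszul swap of a degree-$i$ and a degree-$(n-i)$ factor.
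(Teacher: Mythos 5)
Your proposal follows the paper's proof essentially verbatim: both assertions are reduced to the uniqueness up to chain homotopy of the diagonal approximation from Lemma~\ref{diagonala}, with the Koszul flip $T\circ\widetilde{\Delta}$ handling the first statement and the comparison of $\Delta$ against $(1\otimes\iota)\circ\Delta'$ handling the second. Your closing caveat that the homotopy produced by the acyclic-models induction must itself be checked to have finite propagation is a fair point, though the paper leaves this implicit by deferring to the analogous arguments in Appendix~C of \cite{X}.
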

For the complex of inner space $(\widetilde{W}^{ \bar{n}}_{*}(X),b)$,  
then the complex $(\widetilde{W}^{ \bar{n}}_{*}(X),b,\widetilde{\mathbb{P}})$ is a geometrically controlled Poincare complex in the meaning of Higson and Roe in \cite{HR2}. Finally, we get the theorem.
\begin{theorem} If $X$ is the 2s+1 dimensional oriented Non Witt space defined in \ref{nonwi1} with the Lagrange structure in the link of $X_0$, then the Poincare dual map is chain equivalence for the chain $\widetilde{W}^{ \bar{n}}_{*}(X)$. Moreover,$(\widetilde{W}^{ \bar{n}}_{*}(X),b,\widetilde{\mathbb{P}})$ is a geometrically controlled Poincare complex.
\end{theorem}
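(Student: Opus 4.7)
The plan is to assemble the theorem from the ingredients already developed in the chapter, treating it essentially as a bookkeeping result that certifies $(\widetilde{W}^{\bar n}_*(X), b, \widetilde{\mathbb P})$ satisfies the three axioms of Definition \ref{hpc} in the geometrically controlled category. The two substantive inputs are Lemma \ref{p1}, which gives a geometrically controlled chain equivalence $\widetilde{\mathbb P}\colon \widetilde{W}_{\bar m}^{j}(X)\to \widetilde{W}^{\bar n}_{n-j}(X)$, and Lemma \ref{l3}, which states that $\iota\colon \widetilde{W}^{\bar m}_*(X)\to \widetilde{W}^{\bar n}_*(X)$ is a chain equivalence, so that its adjoint $\iota^{*}\colon \widetilde{W}_{\bar n}^{*}(X)\to \widetilde{W}_{\bar m}^{*}(X)$ is a chain equivalence of cochain complexes. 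Composing, the map
\[
\widetilde{W}_{\bar n}^{j}(X)\xrightarrow{\iota^{*}} \widetilde{W}_{\bar m}^{j}(X)\xrightarrow{-\widetilde{\cap}[X]}\widetilde{W}^{\bar n}_{n-j}(X)
\]
is the desired duality chain map, and it is a geometrically controlled chain equivalence as a composition of such.

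First I would verify the chain map property $Tb^{*}v+(-1)^{p}bTv=0$. This follows because $\widetilde{\mathbb P}$ is defined via $-\widetilde{\cap}[X]$ where $[X]$ is a cycle and the cap product is a chain map in each variable: the algebraic identity $\partial (\alpha\cap \xi)=(-1)^{|\alpha|}(\delta\alpha)\cap \xi+\alpha\cap\partial\xi$ combined with $\partial[X]=0$ yields the required anticommutation with the boundary operator. The composition with $\iota^{*}$ preserves this relation because $\iota$ is a chain map.

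Next I would address the symmetry condition $T^{*}v=(-1)^{(n-p)p}Tv$. Strictly speaking, the duality map from cap product is only symmetric up to homotopy, so following the recipe in Proposition \ref{geohp} of the preliminaries I would replace $\widetilde{\mathbb P}$ by its symmetrization $T=\tfrac{1}{2}(\widetilde{\mathbb P}^{*}+(-1)^{p(n-p)}\widetilde{\mathbb P})$. Lemma \ref{pdp} tells us that $\widetilde{\mathbb P}^{*}$ is chain homotopic in the geometrically controlled category to $(-1)^{p(n-p)}\widetilde{\mathbb P}$, so the symmetrization differs from $\widetilde{\mathbb P}$ by a geometrically controlled chain homotopy and is therefore still a chain equivalence inducing the same map on homology; this legitimately produces a strictly symmetric $T$ that still satisfies the chain map property. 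The quasi-isomorphism of the dual complex then follows because chain equivalences of free (hence projective) complexes induce isomorphisms on the homology of Hom-complexes, which is condition (3) of Definition \ref{hpc}.

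Finally, I would record that all operators involved are geometrically controlled: $\iota^{*}$ has propagation zero since $\iota$ is the inclusion of a subcomplex, the cap product with $[X]$ is geometrically controlled by Theorem \ref{ordiagonal} and the local nature of the diagonal approximation (its propagation is controlled by the diameter of the simplices in the support of $[X]$), and the chain homotopy supplied by Lemma \ref{pdp} is produced by an acyclic-models argument carried out within geometrically controlled modules. The one step deserving the most care is the symmetrization: one must check that passing from $\widetilde{\mathbb P}$ to $T$ does not destroy the chain equivalence property, and this is exactly where Lemma \ref{pdp} is essential, since without the homotopy $\widetilde{\mathbb P}^{*}\simeq(-1)^{p(n-p)}\widetilde{\mathbb P}$ the averaged map could fail to induce an isomorphism on homology. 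Once this is in hand, packaging $(\widetilde{W}^{\bar n}_*(X),b,T)$ as a geometrically controlled Poincare complex is immediate, and the theorem follows.
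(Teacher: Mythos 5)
Your proof proposal is correct and follows essentially the same route the paper takes: the theorem is assembled from Lemma \ref{p1} (geometrically controlled chain equivalence of $\widetilde{\mathbb P}\colon\widetilde{W}_{\bar m}^{*}(X)\to\widetilde{W}^{\bar n}_{n-*}(X)$), Lemma \ref{l3} (chain equivalence of $\widetilde{W}^{\bar m}_*(X)\hookrightarrow\widetilde{W}^{\bar n}_*(X)$), and Lemma \ref{pdp} (symmetry of $\widetilde{\mathbb P}$ up to geometrically controlled homotopy), exactly as you do when composing $\iota^{*}$ with $-\widetilde{\cap}[X]$ and then symmetrizing. You are somewhat more explicit than the paper in spelling out the verification of the axioms of Definition \ref{hpc}, but no new ingredient is required and the approach is the same.
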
 
Banagl in \cite{BB} define a topological stratified pseudomanifold exists nontrivial self dual sheaf in $SD(\hat X)$ which call $L$-space. Here we use the same name. 
\begin{definition}\label{lspace}
If the non Witt space is a geometrically controlled Poincare pseudomanifold, it called $L$ space.
\end{definition}
In particular, because the chain map introduce a map for the homology group ${\widetilde{\mathrm{IH}}}^{\bar{n}}_*(X)$ of $\widetilde{W}^{\bar{n}}_*(X)$, this chain equivalent introduce the isomorphism ${\widetilde{\mathrm{IH}}}^{\bar{m}}_j(X)\cong {\widetilde{\mathrm{IH}}}^{\bar{n}}_{n-j}(X).$ 

The signature should be stratified homotopy invariant and cobordism with closed self dual Non Witt space. 

\section{$C^*-$ Algebraic Signature of $X$}\label{sign}
Next we can define $C^*-$ Algebraic Signature of $X$ with the procedure introduced in \ref{hilbertpc}.
Given the chain complex $\widetilde{W}^{\bar{n}}_{k-*}(X)$ with the canonical inner product, we identify $\widetilde{W}^{\bar{n}}_{*}(X)$ with $\widetilde{W}^{\bar{n}}_{*}(X)$, and recall $\widetilde{\mathbb{P}}^{*}$ is the adjoint of $\widetilde{\mathbb{P}}$. Because of lemma \ref{pdp}, then $T$ is :
$$T=\frac{1}{2}(\widetilde{\mathbb{P}}+(-1)^{(n-i) i}\widetilde{\mathbb{P}}^{*})$$  
then we can define $\widetilde{E}^{\bar{n}}_{*}(X)$ as the $\ell^2$ completion of $\widetilde{W}^{ \bar{n}}_{*}(X)$ in section \ref{geocon}. Then chain complex of Hilbert spaces $(\widetilde{E}^{\bar{n}}_{*}(X), b,T)$ is an analytically controlled Hilbert Poincare complex because of proposition \ref{geohp}. If we define $B=b+b^*$, and $S$ is defined as a bounded adjointable operator for every $v\in E_p$
$$S(v)=i^{p(p-1)+l} T(v).$$
Then $B+S$ is an invertible. For every Hilbert-Poincare complex of dimension n determines an signature as index class in \ref{signature}.

Next we consider the Hilbert Poincare complex over $C^*-$ algebra $C_r^*(\Gamma)$ for equivariant case. 
The key point is we use the local coefficient system  $\mathbb{F}=C_r^*(\Gamma)$ instead of $\mathbb{C}$ for the chain $(\widetilde{W}^{\bar{n}}_{*}(X)\otimes \mathbb{C}$. See the definition of coefficient system in \ref{coeff}. Here we obtain a chain complex of Hilbert $C_r^*(\Gamma)$- module. We define the Hilbert Poincare $C_r^*(\Gamma)$ module to be $E_{i}^{\bar{n}}(X,C_r^*(\Gamma))$

 The so called $C^*-$ algebraic higher signature $\operatorname{sign}_{\Gamma}(X, f) \in K_{n}\left(C_{r}^{*}(\Gamma)\right)$ is the signature of the Hilbert-Poincare complex $(E_{i}^{\bar{n}}(X ; C_r^*(\Gamma)),b,T)$.

Next we define the self dual L cobordant and stratified homotopy. In this dissertation, all the continuous map between is required to keep the Lagrange structure. Precisely, for any continuous map $f :X_1 \to X_2$,   the Lagrange structure $H_{lag,2}$ of $X_2$ is defined as push forward of $f$ for $H_{lag,1}$ in $X_1$.

Let $X_1$ and $X_2$ are two closed oriented L spaces with continuous maps $f_1 :X_1 \to B\Gamma$ and $f_2 :X_2 \to B\Gamma$. We say $X_1$ and $X_2$ are $\Gamma -$ equivariantly L-cobordant if there exist a L space with boundary W and a continuous map $\partial W=X_{1} \sqcup\left(-X_{2}\right)$ and $\left.F\right|_{X_{1}}=f_{1} \text { and }\left.F\right|_{X_{2}}=f_{2}$. Here we require the map $f_{1}$ and $f_2$ keep the Lagrange structure. Clearly, this L cobordism give the Hilbert Poincare pair, we know signature of Hilbert-Poincare complex is bordism invariant due to \ref{bordism1}. 

A stratified homotopy equivalence between X and Y is a homotopy equivalence in the category of codimension preserving maps. Here the stratum preserving map between two stratified spaces $\phi: X \to Y$ is for each stratum of $S$ of $Y$ , we have
$\mathrm{codim}(\phi^{-1}(S)) = \mathrm{codim}(S)$.  The stratified homotopy equivalence of L space $\phi$ induce a chain equivalence of $W^{ \bar{n}}_{*}(X)$ and $W^{ \bar{n}}_{*}(Y).$ Because of the definition , it is a homotopy equivalence of Hilbert Poincare complex between $ (\widetilde{W}^{ \bar{n}}_{*}(X),b,\widetilde{\mathbb{P}})$ and $(\widetilde{W}^{ \bar{n}}_{*}(Y),b,\widetilde{\mathbb{P}})$.

The next theorem is direct result of higher signature on Hilbert Poincare complex. 
\begin{theorem}\begin{enumerate}
\item The $C^*-$ algebraic higher signatures $\operatorname{sign}_{\Gamma}(X, f) \in K_{n}\left(C_{r}^{*}(\Gamma)\right)$ of L spaces $X$ are invariant under L cobordism. 
If $X_1$ and $X_2$ are n dimensional two closed oriented L spaces with continuous maps $f_1 : X1 \to B\Gamma$ and $f_2 : X_2 \to B\Gamma.$ Suppose $X_1$ and $X_2$ are $\Gamma$-equivariantly L-cobordant, then
$$\operatorname{sign}_{\Gamma}\left(X_{1}, f_{1}\right)=\operatorname{sign}_{\Gamma}\left(X_{2}, f_{2}\right).$$ 

\item $C^*-$ algebraic higher signatures of L spaces are invariant under stratified homotopy equivalences which keeps the Lagrange structure. Suppose $X$ and $Y$ are two closed oriented L spaces, and $f:Y \to B\Gamma $ is a continuous map. If $\phi :X\to Y$ is a stratified homotopy equivalence and keep the Lagrange structure, then
$$\operatorname{sign}_{\Gamma}(X, f \circ \phi)=\operatorname{sign}_{\Gamma}(Y, f)$$

\end{enumerate}
\end{theorem}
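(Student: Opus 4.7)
The plan is to reduce both statements to the corresponding properties of Hilbert--Poincaré complexes over $C^*_r(\Gamma)$, which have already been established in \cite{HR1}. By the construction in Section~\ref{sign}, the $C^*$-algebraic higher signature of an L space $X$ equipped with $f : X \to B\Gamma$ is the signature of the Hilbert--Poincaré complex $(E^{\bar n}_*(X;\mathbb{F}), b, T)$ obtained as the $\ell^2$-completion of $\widetilde W^{\bar n}_*(X)$ tensored with the local coefficient system $\mathbb{F} = C^*_r(\Gamma)$ pulled back via $f$. The two statements thus become, respectively, bordism invariance and homotopy invariance of the signature on the level of analytically controlled Hilbert--Poincaré complexes.

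For part (1), let $W$ be the $(n{+}1)$-dimensional L-cobordism with $\partial W = X_1 \sqcup (-X_2)$ and $F : W \to B\Gamma$ restricting to $f_1, f_2$. I would first build the relative modified chain complex $\widetilde W^{\bar n}_*(W, \partial W)$ together with a relative cap product $\widetilde{\cap}\,[W]$, constructed from the diagonal approximation of Lemma~\ref{diagonala} exactly as in Section~4.3. A boundary-aware version of Lemma~\ref{p1} yields a Poincaré--Lefschetz duality chain equivalence $\widetilde W^*_{\bar m}(W, \partial W) \to \widetilde W^{\bar n}_{n+1-*}(W)$ in the geometrically controlled category. After $\ell^2$-completion this produces a Hilbert--Poincaré pair $(E, b, T, P)$ in the sense of Definition~\ref{ppair}, whose boundary $(PE, Pb, T_0)$ is the Hilbert--Poincaré complex of $\partial W$, orthogonally split as the sum of the complexes for $X_1$ and $-X_2$. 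Since signature is additive under disjoint union and changes sign under orientation reversal, the vanishing \eqref{bordism1} forces $\operatorname{sign}_\Gamma(X_1, f_1) - \operatorname{sign}_\Gamma(X_2, f_2) = 0$.

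For part (2), let $\phi : X \to Y$ be a stratified homotopy equivalence preserving the Lagrange structure. I would first show that $\phi_*$ carries $\widetilde W^{\bar n}_*(X)$ to $\widetilde W^{\bar n}_*(Y)$ as a chain map; the hypothesis that $\phi$ preserves the Lagrange structure is exactly what is needed because $\widetilde W^{\bar n}_*$ differs from $W^{\bar n}_*$ only by the modification in Definition~\ref{mi1}, which is controlled by the choice of Lagrangian $\mathrm H_{\mathrm{lag}}$ in the links of the non-Witt stratum. Using naturality of the diagonal approximation together with its uniqueness up to chain homotopy (Lemma~\ref{diagonala}), one checks that the two chain maps
\[
\phi_* \circ \widetilde{\mathbb{P}}_X \circ \phi^* \qquad\text{and}\qquad \widetilde{\mathbb{P}}_Y
\]
from $\widetilde W^*_{\bar n}(Y)$ to $\widetilde W^{\bar n}_{n-*}(Y)$ agree on homology. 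This is the defining condition of a homotopy equivalence of Hilbert--Poincaré complexes over $C^*_r(\Gamma)$ in the sense of Section~\ref{hilbertpc}, and since signature is invariant under such homotopy equivalences, we get $\operatorname{sign}_\Gamma(X, f\circ\phi) = \operatorname{sign}_\Gamma(Y, f)$.

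The main obstacle I anticipate is the relative construction in part~(1): the modification from $W^{\bar n}_*$ to $\widetilde W^{\bar n}_*$ uses a chosen Lagrangian in the link of each singular stratum of $X$, so one must verify that an L-cobordism $W$ supplies compatible Lagrangians in the links of its own singular strata whose restrictions to the boundary coincide with those chosen for $X_1$ and $X_2$. This compatibility should be built into the notion of L-cobordism since $W$ is itself an L space, but verifying that the resulting relative Poincaré--Lefschetz duality at the chain level fits precisely into the four axioms of Definition~\ref{ppair} will require a careful cone-formula argument near each boundary stratum, analogous to the cone computations \eqref{c1} and \eqref{c2} but now with collar direction present.
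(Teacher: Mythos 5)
Your proposal follows essentially the same route as the paper: reduce both claims to the bordism invariance (Higson--Roe, \cite[Theorem 7.6]{HR1}, quoted in the text as \eqref{bordism1}) and the homotopy invariance of signatures of Hilbert--Poincar\'e complexes over $C^*_r(\Gamma)$. The paper's own treatment is considerably terser --- it merely asserts that ``this L-cobordism gives the Hilbert--Poincar\'e pair'' and that a Lagrange-preserving stratified homotopy equivalence ``induces a chain equivalence'' of the modified complexes, then declares the theorem a direct consequence --- whereas you explicitly lay out the relative chain complex, the relative cap product, the boundary-aware Lefschetz duality, and the split of $\partial W$ into the complexes for $X_1$ and $-X_2$, which are the steps the paper implicitly invokes. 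You also correctly flag the compatibility of Lagrange structures along the cobordism as the one issue that the notion of L-cobordism must encode but that the paper leaves unexamined; that is a legitimate gap in the paper's exposition rather than a flaw in your argument, and your plan of checking the four axioms of Definition~\ref{ppair} via a collar-adjusted cone computation is the right way to close it.
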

\subsection{Homotopy Invariance under Subdivision}
In this subsection, I will show $\widetilde{W}_{\bar{n}}^*(X)$ and $\widetilde{W}_{\bar{m}}^*(X)$ are geometrically controlled homotopy invariant under a subdivision of M in \ref{sub1}. We follow the framework in Section 6 and 7 of \cite{X}. Then we can use the procedure of  $Sub^n(M)$ to construct $K-$ homology class in $X$.

Given a triangulation $T$ of $X$. We denote $\widetilde{W}_{\bar{m}}^*(X; T, T')$ as the geometrically controlled Poincare complex based on the  barycentric subdivision of $T$. 
Now assume $S$ is a subdivision of $T$. We construct the geometrically controlled Poincare complexes $\widetilde{W}_{\bar{n}}^*(X; T, S')$  and $\widetilde{W}_{\bar{n}}^*(X; T, S')$. Consider the inclusion chain maps below:
$$\tau_1: \widetilde{W}_{\bar{n}}^*(X; T, T') \to \widetilde{W}_{\bar{n}}^*(X; T, S') $$
and
$$\tau_2: \widetilde{W}_{\bar{n}}^*(X; S, S') \to \widetilde{W}_{\bar{n}}^*(X; T, S') $$
\begin{lemma}
  $\tau_1$ and $\tau_2$ are geometrically controlled chain equivalences.
\end{lemma}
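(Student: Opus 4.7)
The plan is to invoke the global-local principle for geometric modules (Theorem \ref{landg}): both $\tau_1$ and $\tau_2$ will be chain equivalences provided their diagonal parts are chain equivalences at every simplex. This reduces the entire question to a purely local computation in the star of a fixed simplex, where the cone formulas \ref{c1} and \ref{c2} already give us complete control over the local intersection (co)homology.

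First I would treat $\tau_1$. Both source and target are geometric $(\mathbb{C}, X)$-modules indexed by simplices of $T$; the control map for the source uses the first barycentric subdivision $T'$, while for the target it uses $S'$. At each simplex $\sigma \in T$, the diagonal piece of either complex computes the local (co)homology supported on $\mathrm{St}(\hat\sigma)$, which is piecewise-linearly a cone on the intrinsic link of $\sigma$. By the cone formulas \ref{c1} and \ref{c2} this local homology is determined purely by the intrinsic link of $\sigma$ and, when that link is non-Witt, by the chosen Lagrangian subspace $\mathrm{H}_{\mathrm{lag}}$. Since the intrinsic link and its Lagrange decomposition are PL invariants (they do not depend on the choice of subdivision used to compute them), the inclusion $\tau_1$ induces an isomorphism on diagonal homology at every $\sigma$. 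As a quasi-isomorphism of bounded finitely generated free complexes, it is a chain equivalence on each diagonal part, hence globally by Theorem \ref{landg}.

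For $\tau_2$, the source is indexed by simplices of $S$ and the target by simplices of $T$. Following the strategy in Section 6 of \cite{X}, I would first promote $\widetilde{W}_{\bar{n}}^*(X; S, S')$ to a geometric $(\mathbb{C}, X)$-module over the $T$-indexing set via the control map that sends each simplex of $S$ to the minimal simplex of $T$ containing it. With this identification, the diagonal part over $\sigma \in T$ is a direct sum over simplices of $S$ lying in the interior of $\sigma$, and still computes the local intersection cohomology of $\mathrm{St}(\hat\sigma)$. Applying the cone formula in both $T'$ and $S'$ and comparing via the standard subdivision chain equivalence on the link, the map $\tau_2$ induces an isomorphism on diagonal homology, whence Theorem \ref{landg} again produces a geometrically controlled chain equivalence.

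The main obstacle will be verifying consistency of the Lagrange structure under refinement. The modified complex $\widetilde{W}^{\bar{m}}_{s+1}$ depends on a choice of Lagrangian $\mathrm{H}_{\mathrm{lag}} \subset \mathrm{IH}^{\bar{m}}_s(\mathrm{lk}(\sigma_i,T'))$; for $\tau_2$ to induce a well-defined comparison one must check that the subspaces $V_{\mathrm{lag}}$, $V_{\mathrm{lag}}^\perp$, $V_o$ defined via $T'$ correspond to their counterparts defined via $S'$ under the canonical chain equivalence between link complexes. Because this chain equivalence preserves the intersection pairing on middle intersection homology, it carries a Lagrangian to a Lagrangian, and this compatibility propagates through the direct-sum decomposition and through the cone construction $C(V_{\mathrm{lag}}) \oplus C(V_{\mathrm{lag}}^\perp) \oplus C(V_o)$ used in Definition \ref{mi1}. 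Once this bookkeeping is in place the local homology comparison is immediate, and the two asserted chain equivalences follow.
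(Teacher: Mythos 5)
Your argument is correct, and its local core --- isolating the star of a simplex and invoking the cone formulas \ref{c1} and \ref{c2} --- is exactly what the paper does. Where you differ is in the local-to-global passage: the paper invokes a Mayer--Vietoris induction, while you invoke Roe's global-local criterion (Theorem \ref{landg}) for geometric $(\mathbb{C},X)$-modules and reduce to diagonal parts. The two mechanisms are interchangeable in this setting, and both deliver geometric control automatically since a geometric morphism that is a chain equivalence diagonal-by-diagonal is a controlled chain equivalence. You also supply two points the paper's one-paragraph proof leaves implicit. First, for $\tau_2$, the $S$-indexed module must be re-indexed over $T$ before the diagonal criterion applies; this is the $\hat Q_1/Q_1$ device of Higson--Xie, Section~6, which the paper cites only by reference. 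Second, the Lagrange decomposition $V_{\mathrm{lag}}\oplus V_{\mathrm{lag}}^\perp\oplus V_o$ must be transported across the subdivision chain equivalence of links, and you correctly observe that this equivalence preserves the middle-dimensional intersection pairing and therefore carries a Lagrangian to a Lagrangian. That verification is the one genuinely new ingredient relative to the Witt case of \cite{X}, and it is right to flag it; one small imprecision is that the relevant link is $\mathrm{lk}(\hat\sigma,T')\cong S^{k-1}*L(\hat\sigma)$ rather than the intrinsic link $L(\hat\sigma)$ itself, but since the sphere factor contributes nothing in the middle degree this does not affect the Lagrangian comparison.
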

The proof is still an argument of Mayer-Vietoris sequence. Let $Y$ be a star of $\sigma$ in $T$. Because the cone formula in \ref{c1} and \ref{c2}, an isomorphism of homology in cone induce chain equivalence between $\widetilde{W}_{\bar{n}}^*(Y; T, T')$ and $\widetilde{W}_{\bar{n}}^*(Y; S, S')$.

\begin{corollary}
If $X$ is a closed oriented L space, then the two geometrically controlled Poincare complexes  $\widetilde{W}_{\bar{n}}^*(X; T, T')$  and $\widetilde{W}_{\bar{n}}^*(X; S, S')$  are geometrically controlled chain homotopy equivalent.
\end{corollary}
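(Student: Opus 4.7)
The corollary is essentially a two-out-of-three style consequence of the preceding lemma, so the plan is short. The idea is to use the intermediate complex $\widetilde{W}_{\bar{n}}^*(X;T,S')$ as a bridge: both $\widetilde{W}_{\bar{n}}^*(X;T,T')$ and $\widetilde{W}_{\bar{n}}^*(X;S,S')$ map into it via $\tau_1$ and $\tau_2$ respectively, and both maps are geometrically controlled chain equivalences. So one only needs to invert $\tau_2$ inside the geometrically controlled category and compose.

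Concretely, I would first invoke the preceding lemma to get the geometrically controlled chain equivalences $\tau_1$ and $\tau_2$. Next, I would appeal to the standard fact, used throughout Higson--Roe \cite{HR2} and already implicit in the construction of $\widetilde{W}^{\bar n}_*$ as a chain complex of geometric $(\mathbb{C},X)$-modules, that in the geometrically controlled category a chain equivalence admits a geometrically controlled homotopy inverse; equivalently, by Theorem \ref{landg}, one can choose inverses and homotopies at the diagonal level and assemble them into a geometric morphism. Applying this to $\tau_2$ yields a geometrically controlled chain map $\tau_2^{-1}\colon \widetilde{W}_{\bar{n}}^*(X;T,S')\to \widetilde{W}_{\bar{n}}^*(X;S,S')$ together with geometrically controlled homotopies $\tau_2\tau_2^{-1}\simeq \mathrm{id}$ and $\tau_2^{-1}\tau_2\simeq \mathrm{id}$.

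Finally, the composition $\tau_2^{-1}\circ\tau_1\colon \widetilde{W}_{\bar{n}}^*(X;T,T')\to \widetilde{W}_{\bar{n}}^*(X;S,S')$ is a composition of geometrically controlled maps, hence geometrically controlled, and it is a chain equivalence because both factors are. Its homotopy inverse is produced symmetrically as $\tau_1^{-1}\circ\tau_2$, with the required chain homotopies obtained from standard composition-of-homotopy formulas.

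The only non-routine point is the production of the geometrically controlled homotopy inverse of $\tau_2$; once that is granted, the rest is formal. This is really an invocation of the global-to-local principle of Theorem \ref{landg}: one checks that the induced map on diagonal parts is a chain equivalence (which follows from the cone-formula calculation used to prove the previous lemma) and then lifts an inverse back to the full geometric morphism. Everything else is bookkeeping, and no further estimate on propagation is needed because all three complexes live over the same metric space $X$ and the maps $\tau_1,\tau_2$ have uniformly bounded propagation.
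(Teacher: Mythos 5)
Your plan correctly reproduces the first half of the paper's argument: use $\widetilde{W}_{\bar n}^*(X;T,S')$ as a bridge, invoke the preceding lemma to see that $\tau_1,\tau_2$ are geometrically controlled chain equivalences, invert $\tau_2$ in the geometrically controlled category (via the global--local principle, Theorem \ref{landg}), and form $\tau := \tau_2^{-1}\circ\tau_1$. That part is fine and matches the paper.

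The gap is what comes after. The corollary is a statement about \emph{Poincar\'e} complexes, and in the sense of Higson--Roe (Section \ref{hilbertpc}) a homotopy equivalence of Poincar\'e complexes is not merely a chain equivalence $A$ of the underlying complexes: one must also verify that $A T A^{*}$ and $T'$ induce the same map on homology, i.e.\ that $A$ is compatible with the duality operators. Your argument stops once it has produced a geometrically controlled chain equivalence $\tau$; it never checks that $\tau$ transports $\mathbb{P}_T$ to $\mathbb{P}_S$. This is exactly the non-routine step the paper singles out right after the corollary: ``We need to prove $\tau$ preserve the Poincare dual,'' followed by the verification
$\tau \circ \mathbb{P}_{T} \circ \tau^{*}(\beta)=\varepsilon\bigl(\beta \otimes(\tau \circ \Delta_{T})[X]\bigr)=\varepsilon\bigl(\beta \otimes \Delta_{S}[X]\bigr)=\mathbb{P}_{S}(\beta)$,
which relies on the naturality and uniqueness (up to geometrically controlled homotopy) of the diagonal approximation $\widetilde{\Delta}$ from Lemma \ref{diagonala}. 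Without this compatibility, one cannot conclude that the two Poincar\'e complexes produce the same signature class, which is the whole point of proving the corollary. You should add this final verification, observing that $\tau\circ\Delta_T$ is again a diagonal approximation for the $S$-subdivision and hence chain homotopic to $\Delta_S$ in the geometrically controlled category.
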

Suppose $\tau:  \widetilde{W}_{\bar{n}}^*(X; T, T') \to \widetilde{W}_{\bar{n}}^*(X; S, S') $ is the chain map, and $\tau*$ is dual of $\tau.$ We need to prove $\tau$ preserve the Poincare dual. Given triangulation $T$, let $\mathbb{P}_{T}(\alpha)=\varepsilon\left(\alpha \otimes \Delta_{T}[X]\right)$ be the Poincare dual map from $\alpha \in W_{\bar{n}}^{i}\left(X ; T, T^{\prime}\right)$. Similarly, 
$\mathbb{P}_{S}(\beta)=\varepsilon\left(\beta \otimes \Delta_{S}[X]\right)$ is Poincare dual map for triangulation $S$. Then it is enough to observe that $\tau \circ \mathbb{P}_{T} \circ \tau^{*}(\beta)=\varepsilon\left(\beta \otimes\left(\tau \circ \Delta_{T}\right)[X]\right)=\varepsilon\left(\beta \otimes \Delta_{S}[X]\right)=\mathbb{P}_{S}(\beta)$.

Assume $\mu_i^k$ is the basis of $\widetilde{W}^{\bar{m}}_{k}(X)$ which is supported on the star of vertex $v_i^k$. Then it can define an $X-$ module on $\mathcal{H}_0$ by acting on the vertex $f\cdot\mu_{i}^{k}=f(v_{i}^{k})\mu_{i}^{k}$.  
\begin{equation}
    \mathcal{H}_{0}=\bigoplus_{k} \widetilde{W}_{k}^{\bar{m}}(X) \otimes \mathbb{C}
\end{equation}
For each non negative integer $i$, define $Sub^i(M)=Sub(Sub^{i-1}(M))$, we can define $\mathcal{H}_{k}$ based on the successive refinements:
\begin{equation}
    \mathcal{H}_{k}=\bigoplus_{k} \widetilde{W}_{k}^{\bar{m}}(Sub^k(X)) \otimes \mathbb{C}
\end{equation}
Let $\mathcal{H}$ be the $\ell_2-$ completion of $\oplus_{i=0}^{\infty} \mathcal{H}_{i}$. Then $\mathcal{H}$ is an ample nondegenerate $X$-module by inheriting $X$-module structure from $\mathcal{H}_{k}$. Because the geometrically controlled Poincare complex is homotopy equivariant with subdivision. Then it is possible to use the uniformly bounded subdivision to control the propagation.  Next is the standard construction of K-homology class in \cite{XX} Appendix B. There is an outline in section \ref{khl}.
\begin{definition} 
The K-homology class of the signature operator of L space $X$ is defined to be the K-theory class of the path $U$ in $K_1(C_L^*(X))$. K-homology class is $[D_{\mathrm{sign}}]$ .
\end{definition}

\begin{definition} 
The K-homology class of the signature operator on L space $X$ is defined to be the K-theory class in $K_0(C_L^*(X))$ determined by $Q$: a norm-bounded and uniformly continuous path of $\sigma$-quasi-projections $[0,\infty] \to C^*(X)$. 
\end{definition}
\clearpage
\section{Connection with others Research}
Albin, Leichtnam, Mazzeo and Piazza in \cite{AA} introduce the self dual mezzoperversity to construct the signature package in Cheeger space. The authors above and Banagl in \cite{B} redefine a self dual intersection homology in Non Witt space. Banagl in the book \cite{BB} construct a signature and L class in Non Witt space. In these construction, he talks about the obstruction of signature existence on the Non-Witt space.  Although the self-dual sheaf in \cite{BB} is not stratified homotopy, it is independent of choice for Lagrangian structure for $L$ class. 

\subsection{Hilbert Poincare complex on Cheeger space}
In \cite{HR2} , Higson and Roe prove the analytical Hilbert Poincare complex $\left(\Omega_{L^{2}}^{*}(X), d\right)$ based on Hodge de Rham complex and $\left(C_{*}^{\ell^{2}}(X), b^{*}\right)$ from $\ell^2$ simplicial chain complex are chain equivalent  for combinatorial manifold. The similar result are generalized to Witt space with conical singularity in \cite{X}.
In this section,  we will prove Hilbert Poincare complex where \cite{AA} defined is controlled chain homotopy equivalent to Hilbert Poincare complex $(E_i(X),b) $ for the pseudomanifold with conical singularity. Then the two signature is equivalent.

In \cite{cheeger1979spectral}, Cheeger consider the 
Cheeger boundary condition carries the Cauchy information of the link, he prove a condition of self dual $L^2$ cohomology on pseudomanifold with conical singularity. In this section, I will follow Albin, Leichtnam, Mazzeo, and Piazza's framework to explain analytical signature on smooth stratified pseudomanifold \cite{A}.
Let us consider the stratified space $X$ with only one singular stratum $Y$. 
The resolution $\bar X$ of $X$ is obtained by blowing up each of conical fibers at its vertex. 

A smooth manifold $\bar X$ with boundary $\partial \bar X$,  a fibration of $\partial \bar X$ over $Y$ with fiber $Z$.
$$Z-\partial \bar X \to Y$$
Let $$\mathrm{mid }= \frac{\mathrm{dim} Z }{2}$$
In the complete edge metric is defined (also called iterated incomplete edge, or iie, metrics), 
$${dx^2}+x^2*g_{Z}+\phi^*(g_Y)$$
Thus sections of $\prescript{iie}{}{T^{*}X}$ are locally spanned by
$dx$, $dy$, $xdz$. The wedge operator is incomplete edge operator and we need to consider the complete extension. The minimal extension is $\mathcal{D}_{min}(d)$ and the maximum extension is $\mathcal{D}_{max}(d)$. All closed extension is between these two extensions. 
\begin{equation}
\mathcal{D}_{\max }\left(d\right)=\left\{\omega \in L^{2}\left(X ; \Lambda^{*}(\prescript{iie}{}{T^{*}X}) X\right): d\omega \in L^{2}\left(X ; \Lambda^{*}(\prescript{iie}{}{T^{*}X}) X\right)\right\}
\end{equation}
\begin{equation}
\begin{split}
\mathcal{D}_{\min }\left(d\right) &=\{\omega \in L^{2}\left(X ; \Lambda^{*}(\prescript{iie}{}{T^{*}X}) X\right): \\
& \exists\left(\omega_{n}\right) \subseteq \mathcal{C}_{c}^{\infty}\left(X ; \Lambda^{*} (\prescript{iie}{}{T^{*}X})\right) \text { s.t. } \omega_{n} \stackrel{L^{2}}{\longrightarrow} \omega \text { and }d\omega_{n} \text { is } L^{2} \text { -Cauchy }\}
\end{split}
\end{equation}

The $L^2$ differential form with $\mathcal{D}_{\max}$ or $\mathcal{D}_{\min}$ is a Hilbert complex \cite{A}. We know the de Rham cohomology $H^i_{DR}$ is isomorphic to simlicial homology $H_i$ for smooth manifold. Similarly, $L^2$ cohomology with $\mathcal{D}_{\min}$ is dual of upper middle perversity intersection homology group, and $L^2$ cohomology with $\mathcal{D}_{\max}$ is dual of lower middle intersection homology
For Witt space, the two extension is same. 

For non Witt space, Here the formulation of boundary condition is Cauchy data map. A local ideal boundary condition is a bundle homomorphism, which is empty when X is Witt space. It is corresponding to Lagrange structure of middle homology. 

The Mezzoperversity (flat structure) require the global condition which means flat choice the subbundle repsect to the connection $\bigtriangledown^H$. If $\mathcal{W}$ is a flat subbundle:
$$\mathcal{W} \longrightarrow \mathrm{{H}}^{\operatorname{mid}}(\partial X / Y) \longrightarrow Y$$
then Cheeger ideal boundary conditions from $\mathcal{W}$ is:
$$\mathcal{D}_{\mathcal{W}}(d)=\left\{\omega \in \mathcal{D}_{\max }(d): \alpha\left(\omega_{\delta}\right) \text { is a distributional section of } W\right\}.$$ A realization of de Rham operator in the ideal Cheeger condition is closed and self-adjoint Fredholm operator. Let $mathscr{D} \mathcal{W}$ is the  orthogonal complement of $W$ . Then the Poincare dual map is $$Q: \mathrm{H}_{\mathcal{W}}^{*}(\widehat{X}) \times \mathrm{H}_{\mathscr{D} \mathcal{W}}^{*}(\widehat{X}) \longrightarrow \mathbb{R}$$

\begin{definition}\cite{AA}
 A mezzoperversity $W$ is self-dual if $\mathcal{W} = \mathscr{D} \mathcal{W}$. A pseudomanifold $X$ with a self-dual mezzoperversity is a Cheeger space. 
\end{definition}
\begin{remark}
Banagl in \cite{BB} define a topological stratified pseudomanifold exists nontrivial self dual sheaf in $SD(\hat X)$ which call $L$-space. In the proposition 4.3 of \cite{B}, A smoothly stratified pseudomanifold is a Cheeger space if and only if it is an $L$-space.  The requirement of pseudomanifold  with conical singularity to be the Cheeger space is the signature of the link is 0.

\end{remark}
Let $(\hat X,g,B)$ be a Cheeger space with a self-dual mezzoperversity $\mathcal{W}$, and $B$ are the associated Cheeger ideal boundary conditions with $\mathcal{W}$. The signature operator
$\eth_{\text {sign }}^{+}$ is closed and Fredholm in Theorem 6.6 \cite{A}, then the Fredholm index is the signature of the  quadratic form with generalized Poincare duality
$$\mathcal{D}_{\mathbf{B}}\left(\eth_{\mathrm{sign}}^{\pm}\right)=\mathcal{D}_{\mathbf{B}}\left(\eth_{\mathrm{dR}}\right) \cap L^{2}\left(X ; \Lambda_{\pm}^{*}\left({ }^{\mathrm{iie}} T^{*} X\right)\right)$$

 For the $u_q\cong\mathbb{B} \times C\left(Z_{q}\right)$, because of Proposition 7.1 in \cite{A},  we have
\begin{equation}
    \mathrm{H}^{k}\left(\left.d\right|_{\mathcal{U}_{q}}, \mathcal{D}_{\mathbf{B}}\left(\left.d\right|_{\mathcal{U}_{q}}\right)\right)=\left\{\begin{array}{ll}\mathrm{H}_{\mathbf{B}\left(Z_{q}\right)}^{k}\left(Z_{q}\right) & \text { if } k<\frac{1}{2} \operatorname{dim} Z \\ W\left(Z_{q}\right) & \text { if } k=\frac{1}{2} \operatorname{dim} Z \\ 0 & \text { if } k>\frac{1}{2} \operatorname{dim} Z\end{array}\right.
\end{equation}
Because the signature of $Z_q$ is 0, $W(Z_q)$ is the Lagrange space of $H_k(Z_q)$. This means the cone formula is same with \ref{c2}.
Let the Laplacian operator $\Delta=\eth_{\text {sign }}^2$, and heat operator is $e^{-\Delta}$.

Next argument is basically same with proof of Hilbert Poincare complex equivalent of section 5 in \cite{X}. If we just consider the punctured cone $C_{0,1}(N)=(0,1) \times N$ with conic Riemann metric $d r^{2}+r^{2} g_{N}$ . 

Suppose the i-form $\theta$ in $C_{0,1}(N)$ is $\theta=g(r) \phi+f(r) d r \wedge \beta.$ Because of \cite[Theorem 3.1]{cheeger1979spectral}, we know
if the $\phi_{i}$ and $\psi_{j}$ are the orthonormal basis of harmonic $k$-form $H^k(N)$ satisfies the boundary condition $\pi_{\mathcal{H}^{k}(N)}[\theta(r, x)]=\Sigma f_{i} \phi_{i}+\Sigma g_{j} \psi_{j}$ with $f_{i}^{\prime}(0)=g_{j}(0)=0$.
Let $\phi_{i}$ be the basis of Lagrange space $V_{a}$ in the sense of \cite[formula 3.16]{cheeger1979spectral}. We need to map $g(r) \phi_i$ to the basis of $V_{lag}$. For $f(r)dr\wedge \omega$, it is in the image of $\mathcal{H}_{\Delta}$ in \cite[lemma 6.1]{X}, we do not need to worry that. The map 
 is defined by $\xi \mapsto \int_{\xi} e^{-\Delta} \omega$ in \cite[lemma 6.3]{X}. Because of \cite[lemma 6.5]{X}, it can define the chain map
$\Psi: $ $\left(L_{\mathrm{iie}}^{2}\left(X ;{ }^{\mathrm{iie}} \Lambda^{*} X\right), \mathcal{D}_{\mathcal{W}}(d)\right) \rightarrow\left(\widetilde{E}_{\bar{n}}^{*}(X), b^{*}\right)$ is a controlled chain homotopy equivalence.

\begin{theorem}
$\Psi^*$ is a controlled homotopy equivalence from the Hilbert Poincare complex \\$\left(L_{\mathrm{iie}}^{2}\left(X ;{ }^{\mathrm{iie}} \Lambda^{*} X\right), \mathcal{D}_{\mathcal{W}}(d)\right)$ to Hilbert Poincare complex $\left(\widetilde{E}_{\bar{n}}^{*}(X), b^{*}\right).$
\end{theorem}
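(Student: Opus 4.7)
The plan is to follow the blueprint already sketched before the theorem, namely the one used by Higson--Xie in \cite{X} for the Witt case, and to organise the argument around a Mayer--Vietoris reduction to a single cone neighbourhood. Concretely, I will define $\Psi$ at the chain level by integration: on a representative $\omega\in L^{2}_{\mathrm{iie}}(X;{}^{\mathrm{iie}}\!\Lambda^{*}X)$ in the domain $\mathcal{D}_{\mathcal{W}}(d)$, set $\Psi(\omega)(\xi)=\int_{\xi}e^{-\Delta}\omega$ for each simplex $\xi$ arising as a basis element of $\widetilde W^{\bar n}_{*}(X)$. The heat smoothing is inserted exactly as in \cite[Lemma 6.3]{X} so that the integrand is pointwise defined and decays off the diagonal; $\Psi$ will be a chain map by Stokes, the Cheeger ideal boundary conditions being designed so that the boundary terms collected at the singular stratum vanish against the allowable chains in $\widetilde W^{\bar n}_{*}(X)$. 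Dualising (over $\mathbb{C}$ and, for the equivariant statement, over $C^{*}_{r}(\Gamma)$) yields $\Psi^{*}$ between the cochain-type Hilbert--Poincar\'e complexes written in the statement.

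Next I would verify that $\Psi$ is geometrically controlled: the propagation is governed by the heat kernel cutoff at scale depending on the triangulation, and using the uniform bounded geometry of the successive subdivisions $\mathrm{Sub}^{n}(X)$ (Subsection on Homotopy Invariance under Subdivision) one obtains uniformly bounded matrix coefficients and finite propagation. That the resulting map intertwines the duality operators (i.e.\ that it is a morphism of Hilbert--Poincar\'e complexes, not merely of chain complexes) comes from the fact that on both sides the duality arises from the same fundamental class $[X]$: $\widetilde{\mathbb{P}}$ on the combinatorial side, and the Hodge $\ast$ paired against the Cheeger boundary condition on the analytic side. The uniqueness up to controlled homotopy of diagonal approximations (Lemma \ref{diagonala}) and the characterisation of $T=\tfrac12(\widetilde{\mathbb{P}}+(-1)^{(n-i)i}\widetilde{\mathbb{P}}^{*})$ guarantee that $\Psi^{*}TA^{*}$ and $T'$ agree up to chain homotopy, as required by the definition of homotopy equivalence of Hilbert--Poincar\'e complexes recalled in Section \ref{hilbertpc}.

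To check that $\Psi$ is a quasi-isomorphism, I would reduce by Mayer--Vietoris (for both the analytic complex, using a cover compatible with the iie structure, and the combinatorial complex, using the same cover on the triangulation) to showing that $\Psi$ is a quasi-isomorphism on an open cone neighbourhood $\mathcal{U}_q\cong\mathbb{B}\times C(Z_q)$ of a point in the singular stratum. On the analytic side, Proposition 7.1 of \cite{A} (quoted above) computes $H^{k}(d|_{\mathcal{U}_q},\mathcal{D}_{\mathbf{B}})$ and yields exactly the Lagrangian piece $W(Z_q)$ in the middle degree, with truncation above. On the combinatorial side, the cone formula \eqref{c2} for $\widetilde{W}^{\bar m}_{*}(C(Z))$ gives the corresponding $\mathrm{H}_{\mathrm{lag}}$ in middle degree with the same truncation. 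Using Cheeger's separation of variables as in \cite[Theorem~3.1]{cheeger1979spectral} to write a harmonic form on the cone as $\sum f_i\phi_i + \sum g_j\,dr\wedge\psi_j$, the map $\Psi$ sends $g(r)\phi_i$ to the cone of the dual simplex representing $\phi_i$; by construction of the modified chain the $\phi_i$ with $f_i'(0)=0$ and valued in $\mathcal{W}$ correspond precisely to a basis of $V_{\mathrm{lag}}$, while the $dr\wedge\omega$-part lies in the image of $\mathcal{H}_\Delta$ and contributes nothing to homology, as in \cite[Lemmas 6.1, 6.3, 6.5]{X}.

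The main obstacle is ensuring that the two Lagrangian structures match up on the nose: the flat subbundle $\mathcal{W}\subset \mathrm{H}^{\mathrm{mid}}(\partial X/Y)$ used to define $\mathcal{D}_{\mathcal{W}}(d)$ must be identified, under the Hodge isomorphism on each link fibre, with the combinatorial Lagrangian $\mathrm{H}_{\mathrm{lag}}\subset \mathrm{IH}^{\bar m}_{s}(L(\chi_0,x))$ used to build $\widetilde W^{\bar m}_{*}(X)$. This is a compatibility condition one builds into the definition of an $L$-space (Definition~\ref{lspace}) via Banagl's correspondence between Cheeger spaces and $L$-spaces, but carrying the identification through the chain map requires care because the heat operator $e^{-\Delta}$ does not exactly respect the splitting; one must invoke the controlled chain homotopy uniqueness of $\widetilde{\Delta}$ (Lemma \ref{diagonala}) to absorb the discrepancy. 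Once this is done, both quasi-isomorphism and geometric control assemble through the (finite) Mayer--Vietoris patch to give the desired controlled homotopy equivalence, and hence the equality of the two analytic and combinatorial higher signatures in $K_{n}(C^{*}_{r}(\Gamma))$.
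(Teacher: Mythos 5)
Your proposal follows essentially the same route as the paper: define $\Psi$ by $\xi\mapsto\int_\xi e^{-\Delta}\omega$ as in \cite[Lemma 6.3]{X}, reduce via Mayer--Vietoris to a single cone neighbourhood, use Cheeger's separation of variables (\cite[Theorem 3.1, formula 3.16]{cheeger1979spectral}) and Proposition 7.1 of \cite{A} on the analytic side against the cone formula \eqref{c2} on the combinatorial side, handle the $dr\wedge\omega$ part via $\mathcal{H}_{\Delta}$ as in \cite[Lemma 6.1]{X}, and conclude from \cite[Lemma 6.5]{X}. You also explicitly raise the compatibility of the flat subbundle $\mathcal{W}$ with the combinatorial $\mathrm{H}_{\mathrm{lag}}$ and the need to absorb the resulting discrepancy using the uniqueness of $\widetilde{\Delta}$, a point the paper's sketch takes for granted.
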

  
\subsection{Refined Intersection Homology }
In this section I will introduce basic idea of sheaf in intersection homology. Then I will prove refined intersection homology in \cite{BB} is equivalent to my construction in some case. 

This based on the \cite{G2}. 
The stratification of n-dimensional $X$ is:
$$X=X_n \supset  X_{n-2} \supset X_{n-3} ... \supset X_1\supset X_0,$$
define $U_k=X\backslash X_{n-k}$, $Y_k=X_k\backslash X_{k-1}$ then  we have 
$$U_k\mathop\rightarrow\limits^{i_k} U_{k+1}\mathop\leftarrow\limits^{j_k}Y_{n-k} $$
for the sheaf $S$ of $U_{k+1}$, we have the exact sequence:
$$0\to j_{!}j^*S\to S\to i_*i^*S\to 0 $$
then the distinguished triangle is :
$X\mathop\rightarrow\limits^{f} Y$ $A$ is the sheaf on $X$ and $B$ is the sheaf on $Y$ then we have $B\to f_*f^*B$ and $f^*f_*A\to A$.

The basic idea of development of derived category is that complexes is better to handle than cohomology. In the derived category, any quasi isomorphism which is isomorphism of homology.
Let $\mathbf{A}^{\bullet}\in D^b_c(X),$ $U_x$ is a small distinguished open neighborhood of $x$. We have the hypercohomolgy:
$$\begin{aligned} \mathbf{H}^{i}\left(\mathbf{A}^{\bullet}\right)_{x} & \cong \mathcal{H}^{i}\left(U_{x} ; \mathbf{A}^{\bullet}\right) \\ \mathbf{H}^{i}\left(j_{x} ! \mathbf{A}^{\bullet}\right) & \cong \mathcal{H}_{c}^{i}\left(U_{x} ; \mathbf{A}^{*}\right) \end{aligned}$$

By Goresky and MacPherson's definition 3.3 in \cite{G2}, the intersection homology sheaf $\mathrm{AX_p}[\mathbb{S}]$  is the $\mathbb{S}$-constructible sheaf and satisfies four axioms.
(a) Normalization (b) lower bound (c) Vanishing condition: $H^m(s^*_{k+1})=0$ for all $m> \bar p(k)- n$. (d) Attaching
For support dimension and cosupport dimension is related with local intersection homology. The stalk is the colimit of $\mathcal{F}(U)$ and limit of $\mathcal{F}(U)$ is costalk. 

For any $x$ in pseudomanifold, the neighbourhood $N_x$ is homemorphic to  $N_x\cong R^{m-k}\times C(L_S)$. 
Let us focus on the pseudomanifold with isolated singularity
 $$\mathrm{I^{\bar p}H}_{-i}^{cl}\left(N_{x}\right) \cong \mathrm{I^{p} H}_{-i}^{c l}\left(\mathbb{R}^{m-k} \times C\left(L_{S}\right)\right) \cong \mathrm{I^{p} H}_{k-m-i}^{c l}\left(C\left(L_{S}\right)\right)$$
So here the costalk vanish is connectting with cone formula : $H^{i}\left(\jmath_{x}^{ !} \mathcal{I}^{p} \mathcal{S}_{X}^{\bullet}\right) \cong \mathrm{IH}^{\bar p}_{-i}\left(C\left(L_{S}\right)\right)$

When $X$ is a pseudomanifold with conical singularity, we will show that the modified lower middle perversity chain $\widetilde{W}_{\bar{n}}^*(X)$ is satisfied the axioms RP below.
\begin{definition} {[definition 2.1 in \cite{BB}]}
Let $X$ be a n dimensional stratified oriented pseudomanifold and $S$ a constructible bounded complex of sheaves. $S$ is a refined middle perversity complex of sheaves if satisfies the axioms $RP$ 
\begin{itemize}
\item Normalization: there is an isomorphism of the restriction of $S$ to the regular part $U_2$ of X and the constant rank 1 sheaf over $U_2$
\item Lower bound: $\mathrm{H}^{\ell}\left(j_{x}^{*} \mathbf{S}^{\bullet}\right)=0$ for any  $x \in \widehat{X}$ and $\ell<0$
\item $\tilde n$-stalk vanishing: $\mathrm{H}^{\ell}\left(j_{x}^{*} \mathbf{S}^{\bullet}\right)=0$ for any $x \in U_{k+1}\setminus U_2$ and $l > \bar n(k)$.
\item $\tilde m$-costalk vanishing: $\mathrm{H}^{\ell}\left(j_{x}^{!} \mathbf{S}^{\bullet}\right)=0$ for any $x \in X_{n-k}\setminus X_{n-k-1}$ and $l \leq \bar m(k) + n -k+ 1.$
\end{itemize}

\end{definition}

Because of Goresky and MacPherson’s {[AX1]}, Normalization and lower bound is satisfied for intersection homology respect with any perversity. Here the modified intersection homology chain is the interpolation so it is compatible with normalization and lower bound. $\tilde n$-stalk vanishing and $\tilde m$-stalk vanishing are self-dual by axioms. 

In fact the cone formula of $\widetilde{\mathrm{IH}^m_{i}}(X)$ is related with stalk vanish 
$$\widetilde{\mathrm{IH}^m_{i}}(C(L_s))=
\begin{cases}
\mathrm{IH}^{m}_{i}(L_s) \quad  i<s \\
\mathrm{H}_{lag}\in \mathrm{IH}^{m}_{s}(L_s) \quad i=s \\
0 \quad  otherwise.
\end{cases}
$$
We know  $\widetilde{\mathrm{IH}^m_{i}}(C(L_s))=0$ 

Then interpolated chain $\widetilde{W}_{\bar{n}}^*(X)$  I defined in this chapter satisfies axioms SD1 to SD4 in \cite{BB}.  This is equivalent to self dual sheaf in $SD(\hat X)$ of \cite{BB}. Then because of the Theorem 4.1 in \cite{B}, it is the redefined middle perversity complex of sheaves.

\section{Geometrically controlled Poincare complex on general non Witt space \label{cha:Summary}}
In this chapter, we will show Non Witt space $X$ can be a geometrically controlled Poincare pseudomanifold if there exist $\widetilde{W}^{\bar m}_*(X)$. The same framework in section \ref{sign} can be applied to construct the $C^*-$ signature on $X$. In fact, the method of construct the self dual chain Non Witt space with only one special stratum can apply to general case.
\subsubsection {Non Witt Space with only One Special Stratum }\label{ssnw}
In this section, we consider the $n$-dimensional pseudomanifold $X$ with one special singular $n-2s-1$ dimensional stratum $\chi_{n-2s-1}$. For each point $x$ of $\chi_{n-2s-1}$, the intrinsic link 
 is written as  $L(\chi_{n-2s-1},x)$. 
 For the Non-Witt space with only one exception stratum, it means  there is a point $x$ such that the lower middle perversity intersection homology about  $L(\chi_{n-2s-1},x)$ is not trivial
$$\mathrm{IH}^m_{s}(L(\chi_{n-2s-1},x),Q)\neq 0.$$ 
For all other odd codimensional stratum $\chi_{n-2i-1}$, here $i\neq s$,  the pseudomanifold  has trivial lower middle intersection homology for the link :
 $$\mathrm{IH}^m_{j}(L(\chi_{n-2i-1},x),Q)= 0.$$
 Because $X$ is not Witt space, and we can get the inclusion map from $W^{\bar m}_*(X)$ to $W^{\bar n}_*(X)$ is not chain equivalent. We still use the triangulation which is finer than the stratification. The general method still decompose the difference space between $W^{\bar m}_*(X)$ and $W^{\bar n}_*(X)$. Then construct new chains:
 \begin{equation}
  W^{\bar{m}}_j(X)\subset \widetilde{W}^{\bar{m}}_j(X)\subset \widetilde{W}^{\bar{n}}_j(X) \subset W^{\bar{n}}_j(X)   
 \end{equation}

In this section, we still use the technique from Xie and Higson's article Appendix C  in \cite{X}.
First, we define the perversity $P_r$:
\begin{equation} \label{p}
       P_r(l)=\left\{
                \begin{array}{ll}
                  \bar{m}(l) ,  \quad  l<r+1\\
                \bar{n}(l) , \quad l>r
                \end{array}
              \right.
\end{equation} 
Next, let r be the maximum integer such that $2r+1\leq n$. We consider the filtration 
$$W^{\bar{m}}_*(X) =W^{P_{2r+1}}_*(X) \subset W^{P_{2r-1}}_*(X) ...\subset W^{P_{1}}_*(X)=W^{\bar{n}}_*(X).$$ 
Because for any other odd codimension $i\neq s$, we have
 $\mathrm{IH}^m_{l}(L(\chi_{n-2i-1},x),Q)= 0$.
 Then $W^{P_{2i-1}}_*(X)$ and $W^{P_{2i+1}}_*(X)$  
  are chain equivalent according to \cite{S} Chapter III, Theorem 3.2. 
 So we get 
  $$W^{\bar{n}}_*(X) \sim W^{P_{2s-1}}_*(X)  \quad \text{and} \quad W^{\bar{m}}_*(X) \sim W^{P_{2s+1}}_*(X).$$ 

The question is to decompose the $W^{P_{2s-1}}_*(X)$ and $W^{P_{2s+1}}_*(X)$. When $j\neq 2s+1$, then $P_{2s+1}(j)=P_{2s-1}(j)$, so the allowable requirement of the chain complex $W^{P_{2s+1}}_*(X)$ and $W^{P_{2s-1}}_*(X)$ are same except for codimension $2s+1$.
 When $y\in W^{P_{2s-1}}_j(X)$, consider the allowable inequality regarding $\chi_{n-2s-1}=Y$: 
$$\mathrm{dim}(y\cap Y)\leq j-(2s+1)+P_{2s-1}(2s+1)=j-s-1.$$
The stronger restriction about $y\in W^{P_{2s+1}}_j(X)$ is: $$\mathrm{dim}(y\cap Y)\leq j-(2s+1)+P_{2s+1}(2s+1)=j-s-2.$$
Clearly,  for $j<s+1$ and $j>n-s+1$, the inequalities are same and $W^{P_{2s+1}}_j(X)=W^{P_{2s-1}}_j(X)$. In this case, we only need to consider the dimension $s<j<n-s+1$.  We need to define $\widetilde W^{p_{2s+1}}_j(X)$ and $\widetilde W^{p_{2s-1}}_{j}(X)$ based on  $W^{p_{2s+1}}_j(X)$ and $W^{p_{2s-1}}_{j}(X)$ such that: 
$$ W^{p_{2s+1}}_*(X)\subset\widetilde W^{p_{2s+1}}_*(X)\subset\widetilde W^{p_{2s-1}}_{*}(X)\subset W^{p_{2s-1}}_*(X) $$
Moreover the inclusion map from $\widetilde W^{p_{2s+1}}_j(X)$ to $\widetilde W^{p_{2s-1}}_{j}(X)$ is chain equivalent.

Let us fix the direct sum decomposition when $s<j<n-s+1$ \begin{equation}\label{dec}
    W^{P_{2s-1}}_{j}(X)=W^{P_{2s+1}}_{j}(X)+U^j.
\end{equation} 
Next, we want to build the connection between the $s$-dimensional cycle $v$ in the intrinsic link $L(Y,y)$ of the stratum $Y$ and $U^j$. For the open stratum $Y=\chi_{n-2s-1}$, it is the disjoint union of the interior of $n-2s-1$ dimensional simplex $\sigma^{n-2s-1}$. Let $u^j_i$ be basis of $U^j.$ Every $u^j_i$ is minimal and supported on the cone. 
$$\sum u^j_i=U^{j}.$$ 
Similar to the case of conical singularity, let $\sigma^{n-2s-1}_i$ be the simplex that intersect $u^j_i$ with this condition:
$$\mathrm{dim}(|u^j_i|\cap \sigma^{n-2s-1}_i)=j-s-1.$$ 

 Let $T$ be the triangulation for defining $W^{\bar m}_{j}(X)$. Define $T'$ as first barycentric subdivision of $T$.  Let $\tau^{j-s-1}_i$  be $j-s-1$ simplex of $T'$ such that:
 $$\operatorname{Int}\left(\tau^{j-s-1}_{i}\right) \subset|u^j_i| \cap \operatorname{Int}(\sigma^{n-2s-1}_{i})$$
We define $lk(\sigma_i,T')$ as the link of $\sigma^{n-2s-1}_i$ in $T'$. We can see $u^j_i$ as the join complex with $\tau^{j-s-1}_i$ and $s$-simplex $v_i\in C^{T'}_{s+1}(lk(\sigma_i,T'))$:

\begin{equation} \label{eq1}
    u^j_i=\tau^{j-s-1} _i*v_i.
\end{equation}

In \cite{S} Chapter III Lemma 3.3, Siegel shows $\partial u_i\cap \mathrm{Int}(\sigma_i)$ does not contain $\mathrm{Int}(\tau_i)$. In other words : $$\partial({v_i})=0. $$ 
Because we can build a isomorphism between cycle $\bar{v}_i\in C^{T'}_{s}(lk(\sigma^{n-2s-1}_i,T)')$ and $v_i$. Here $\bar v_i$ is the chain complex in $W^{\bar m}_*(lk(\sigma, T))$. So $\bar{v_i}$ is cycle of the link $C^{T'}_{s}(lk(\sigma^{n-2s-1}_i,T'))$.  We can rewrite the cycle $\bar{v_i}$ as addition of  the boundary of chain $\bar{w}$: $$\bar{w} \in W^{\bar m}_{s+1}(lk(\sigma^{n-2s-1}_i,T)')$$ and the Representative of $\bar{h} \in \mathrm{IH}^{\bar m}_{i}(lk(\sigma_i,T)')$
 such that $$\bar{v_i}=\partial (\bar{w})+\bar{h}.$$ 
 
 Hence we just consider the corresponding $\bar v_i$ to take place of $v_i$.If $c_i$ is a barycenter of $\sigma_i$,  we define the space $V(c_i)$ to be a set of all $s$-cycle in $W^{\bar m}_*(lk(\sigma_i, T))$.
 
In the conical singular case, we decompose the space of $s$-cycle $V$ in the intrinsic link as: $$V=V_{lag}\oplus V^{\perp}_{lag}\oplus V_o.$$ 
Similar, our target is the decomposition of $V$ and  furthermore $U^j$.

\subsubsection{Compatible Lagrange Structure}\label{compat}
In this section, we need to construct a compatible subspace of $V$ such that the space $U^j$ exist a subbundle The Lagrange structure is a Lagrange subspace of the middle intersection homology $\mathrm{IH}^{\bar m}_s(L)$ of the intrinsic link $L(x)$ for every point.  

In general, the existence of Lagrange structure is not simple when the dimension of singular stratum is not 0 ( conical singularity case). We know for each point $y$ of $Y$, (here $L(y)$ is the intrinsic link ) then $X$ can be seen as the fiber bundle over $Y$. The middle dimensional homology of the link is the vector bundle in the base space of $Y$.

Let us go back to \ref{eq1}, for $\tau^{j-s-1} _i*v_i$, we know $\bar{v_i}$ is the s-cycle of the link $C^{T'}_{s}(lk(\sigma^{n-2s-1}_i,T'))$. We need to consider the connection between $lk(\sigma^{n-2s-1}_i,T)$ for different $\sigma^{n-2s-1}_i$. We know that stratum $\chi_{n-2s-1}$ is disjoint union of interior of $n-2s-1$ dimensional simplex $\sigma^{n-2s-1}$. Let  the barycentric of  $\sigma^{n-2s-1}_i$ be $c_i$. We define the intrinsic link of $L(c_i)$  as the link in simplicial meaning where  the dimension of the points is higher than $n-2s-1$. What's more, for any simplex $\tau$ of $\sigma^{n-2s-1}_i$
$$L(\tau)=\{\mathscr{V}|\mathscr{V}\in lk(\tau,T) \ ,\ \text{and} \ \mathrm{dim}(\mathscr{V})>n-2s-1 \}$$
Clearly we use the intrinsic link of the barycentric to substitute  $ lk(\sigma^{n-2s-1}_i,T)'$ because $L(c_i)\cong lk(\sigma^{n-2s-1}_i,T)'$. In addition, the intrinsic link of any point in $\sigma^{n-2s-1}_i$ is $L(c_i)$.
Moreover, if the $dim(\tau)=j<n-2s-1$, we can find $$L(\tau)\cong S^{n-2s-2-j}*L(\tau).$$ 

Let $\sigma^{n-2s-1}_i,\sigma^{n-2s-1}_{i+1}$ be two adjacent simplices, the intrinsic link $L(c_i)$ and $L(c_{i+1})$ is cobordant.  If we  define the intrinsic link of $c_ic_{i+1}$ is $L(c_ic_{i+1})$, 
$$\partial L(c_ic_{i+1}) =L(c_i)-L(c_{i+1}).$$

In fact,  $L(c_{i+1})$ and $L(c_i)$ are homotopy equivalence,  because the allowable condition is same, there is map $\phi_i : \ L(c_{i+1})\to L(c_i)$ such that  $$\phi^*(\mathrm{IH}^{\bar m}_s(L(c_{i})))=\mathrm{IH}^{\bar m}_s(L(c_{i+1})).$$

Similar with conical singular case, we can use the same condition of \ref{l1} to choose the Lagrange subspace $\mathrm{H}_{lag}(L(c_i))$  of  $\mathrm{IH}^{\bar m}_s(L(c_i))$.  
Here the Lagrange subspace $\mathrm{H}_{lag}(L(c_i))$ is isomorphic to its annihilator $\mathrm{H}_{lag}(L(c_i))^{\perp}$ about the intersection form. 
$$\mathrm{H}_{lag}(L(c_i))\cong \mathrm{H}_{lag}(L(c_i))^{\perp}$$
We can require that:   
$$\phi^*(\mathrm{H}_{lag}(L(c_{i})))=\mathrm{H}_{lag}(L(c_{i+1})).$$

 Recall that s-dimensional cycle of $L(c_i)$ is $V(c_i)$. Similar with the conical case of 
\ref{eqv1}, we define $V_{lag}(c_i)$: $$V_{lag}(c_i)= \{h + \partial w \in V(c_i) | h\neq 0\in \mathrm{H}_{lag}(c_i) \ , and \ w \in W^{p_{2s+1}}_{s+1}(L(c_i))\}. $$ 
$$V_o(c_i)= \{ \partial w \in V(c_i) |\ w \in W^{p_{2s+1}}_{s+1}(L(c_i))\}. $$
The decomposition of $V(c_i)$ is 
$$V(c_i)=V_{lag}(c_i)\oplus V^{\perp}_{lag}(c_i)\oplus V_o(c_i)$$
 In fact, $\phi_i^*$ is the isomorphism from  $V_{lag}(c_{i})$ to $V_{lag}(c_{i+1})$ . The map $\phi^*$ here keep the intersection product of $\mathrm{IH}^{\bar m}_s(L(c_{i}))$. So we do not worry the Lagrange structure.
 
Let us give an example to chose the Lagrange structure on a special nontrivial product case. When $X_1$ is a closed line, the link $L(x)$ of every point $x\in X_1$ is a 2-torus $T^2$. That is a fibre bundle $\pi :E\to B=X_1$ where the fiber $F$ is $C(T^2)$. Choose $a$ and $b$ are two generator of $\mathrm{IH}^{\bar m}_1(T^2).$ 

For two points $c_1$ and $c_2$ on the line, the map of homology of link is defined as $\phi: \mathrm{IH}^{\bar m}_1(L(c_{1}))\to \mathrm{IH}^{\bar m}_1(L(c_{2}))$ define as $\phi:a \to a+b$. Let the figure \ref{fig1} represent the $E.$
After triangulation $T$, we use the $DECG$ and $ABHE$ to represent the cycle of torus. $BE$ represent a cycle $a$ and $AB$ is $b$. So it maps to $DH=a+b$ here. We just connect all the points to make $BEDH$ to be a simplex. Remember that we in fact use a subdivison and triangulation $T$ is finer than the graph. 
\begin{figure}\label{fig1}
\centering
	\includegraphics[scale=0.5]{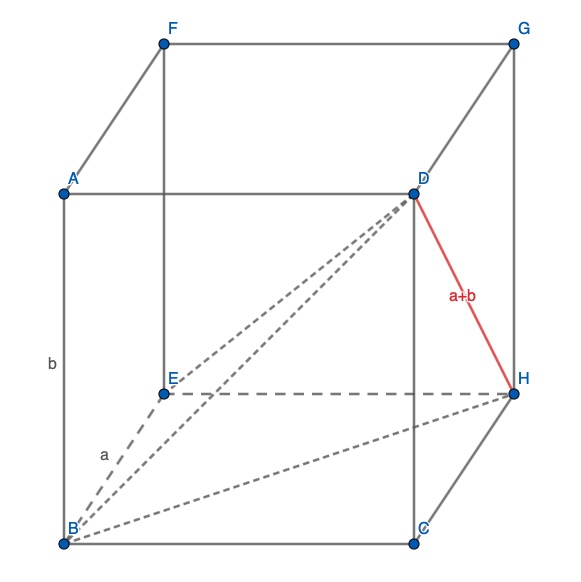}
	\caption{a non trivial product case}
\end{figure}

 We can define the compatible Lagrange space for all point in the segment $c_ic_{i+1}$ connected with $c_i$ and $c_{i+1}$.
\begin{definition}\label{cl}
In a cycle $S=\sum\limits_i c_ic_{i+1}$ , the compatible condition means there is a compatible Lagrange space $\mathrm{H}_{lag}(L(c_i))\in \mathrm{IH}^{\bar m}_s(L(c_i))$  for every $L(c_i)$ such that we can build the map connect for every segment $c_ic_j$:
$$\phi_i^*(V_{lag}(L(c_i))\cong V_{lag}(L(c_{i+1})).$$ 
In addition, for every $i$, we have $${\phi^*_{i-1}}^{-1}\circ {\phi^*_{i-2}}^{-1}.....\phi^{*}_{i+1}\circ \phi^*_{i}\ \text{is a isomorphism  of} \ V_{lag}(L(c_i) $$ 
\end{definition}
Suppose the compatible condition \ref{cl} holds for every circle in $\chi_{n-2s-1}$. For every two connected point $c_i$ and $c_j$, there is a map from $V_{lag}(c_i)$ to $V_{lag}(c_j)$. This means the compatible subspace $V_{lag}^s$ of intrinsic link exists for every point of $\chi_{n-2s-1}$. Let us consider the space $ U^{s+1}$,  we know $u^{s+1}_i$ is $\tau^{0}_i *v_i$, here $\tau^0_i$ is the intersection point in $\sigma^{n-2s-1}_i$. Because of the allowable condition of lower perversity, the intersection point should be barycenter of $\sigma^{n-2s-1}_i$. Correspond to conical case, we  define $U^{s+1}_{lag}$ to be spanned by $u^{s+1}_i$: 
\begin{equation} \label{u1}
   U^{s+1}_{lag}=\{ \sum\limits_i \lambda_ju^{s+1}_i | u^{s+1}_i=c_i*v_i, \ v\in V_{lag}(c_i), \  \lambda_j\in Q\}.
\end{equation}  
$${U^{s+1}_{lag}}^{\perp}=\{ \sum\limits_i \lambda_ju^{s+1}_i | u^{s+1}_i=c_i*v_i, \ v\in {V_{lag}(c_i)}^{\perp}, \  \lambda_j\in Q\}.$$    
 Clearly, we get the decomposition of $U^{s+1}$ via decomposition of intersection homology.  $U^{s+1}_{lag}$ is the space of $U^{s+1}$. 

Let us focus the segment $c_ic_j$ again, because $V(c_i)$ is the space of cycle,  we get  
$$\partial(V(c_i)*c_ic_j)=\{\alpha-\beta|\alpha\in V(c_i)\ ,\ and \ \beta\in V(c_j)\}$$
 Assume we define the subspace $V_{lag}(c_{i})$ and $V_{lag}(c_{i+1})$.
 Because we need to build the chain complex, this means the for every element $v$ of subspace of $V(L(c_ic_j))$,  $\partial v $ should be in $V(c_{i})$ and $V(c_{i+1})$. 

In this cycle $S$, because of compatible condition, we can define the $s$+$2$ dimensional simplex $\sum c_ic_{i+1}*v , \ v \in V_{lag}^{s}$ in the circle.  

So we define the subspace $V^{s+2}$ of $U^{s+2}$ by:
$$U_{lag}^{s+2}=\{\sum\limits_i \lambda_j u^{s+2}_i | u^{s+2}_i \in U^{s+2} ,\ \text{and} \ \ \partial (\sum\limits_i \lambda_j u^{s+2}_i)=\alpha-\beta\ , \ \alpha \ ,\ \beta \  \in U^{s+1}_{lag} \}$$

\begin{definition}
The decomposition of $U^j$ is defined via the mathematical induction, when $j=s+1$, $U^{s+1}_{lag}$ is defined by \ref{u1}.  For $j<n-s-3$, assume that $U_{lag}^{j-1}$ is defined. Let $U_{lag}^{j}$ be
$$U_{lag}^{j}=\{\sum\limits_i \lambda_ku^{j}_i | u^{j}_i\in U^{j} ,\ \text{and} \ \ \partial (\sum\limits_i \lambda_ku^{j}_i)=\alpha-\beta\ ,\ \alpha\ ,\ \beta \ \in U^{j-1}_{lag} ,\ \lambda_k \in Q\}$$
$${U_{lag}^{j}}^{\perp}=\{\sum\limits_i \lambda_ku^{j}_i | u^{j}_i\in U^{j} ,\ \text{and} \ \ \partial (\sum\limits_i \lambda_ku^{j}_i)=\alpha-\beta\ ,\ \alpha\ ,\ \beta \ \in {U^{j-1}_{lag}}^{\perp} ,\ \lambda_k \in Q\}$$
\end{definition}
Hence, we define every space $U_{lag}^j$ when $s<j<n-s+1$. 
Let us define 
$$U^j_o=\{\sum\limits_i \lambda_k \tau^{j-s-1} _i*v_i| v_i\in V_o(c_j) \ \lambda_k \in Q\}.$$
Then the new chain $W^{P_{2s+1}}_{j}[0](X)$ and $W^{P_{2s-1}}_{j}[0](X)$ for $s<j<n-s+1$ is: 
\begin{equation}
 W^{P_{2s+1}}_{j}[0](X)=W^{P_{2s+1}}_{j}(X)+U_{lag}^j +U^j_o. \label{cha1}
\end{equation}
\begin{equation}
W^{P_{2s-1}}_{j}[0](X)=W^{P_{2s-1}}_{j}(X)-{U_{lag}^j}^{\perp}.\label{cha2}
\end{equation}



If $j<s+1$ or $j>n-s$, we define :
$$W^{P_{2s-1}}_{j}[0](X)=W^{P_{2s-1}}_{j}(X), \ and \quad W^{P_{2s+1}}_{j}[0](X)=W^{P_{2s+1}}_{j}(X).$$  We can conclude next lemma. 

\begin{lemma} \label{l7}
$W^{P_{2s+1}}_{*}[0](X)$ and $W^{ P_{2s-1}}_{*}[0](X)$  are chain complex.  Moreover,   $W^{P_{2s+1}}_{*}[0](X)$ and $W^{ P_{2s-1}}_{*}[0](X)$ are chain equivalent respect to the inclusion map.
\begin{proof}
First, we check $W^{P_{2s+1}}_{j}[0](X)$ is a chain complex. This is equivalent to : 
$$\partial W^{P_{2s+1}}_{j}[0](X)\in W^{P_{2s+1}}_{j-1}[0](X).$$
For $j<s+1$ or $j>n-s$,  we do not change the chain. When  $s<j<n-s+1$,  
$$\partial W^{P_{2s+1}}_{j}[0](X) =\partial W^{P_{2s+1}}_{j}(X) + \partial U$$
That is the reason why we add the trivial homology class $U_o$ in the construction.   In the constructions, $W^{P_{2s+1}}_{*}[0](X)$ and $W^{ P_{2s-1}}_{*}[0](X)$  are same when $s<j<n-s+1$. When $j$ is other, they are still same due to the filtration. 
\end{proof}
\end{lemma}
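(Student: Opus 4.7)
The statement has two assertions: (a) $W^{P_{2s+1}}_{*}[0](X)$ and $W^{P_{2s-1}}_{*}[0](X)$ are genuine subcomplexes of $C^{T'}_*(X)$, and (b) the inclusion between them is a geometrically controlled chain equivalence. The plan is to verify (a) dimension by dimension by exploiting the inductive construction of $U^j_{lag}$ and $U^j_o$, and then deduce (b) by reducing to the single intermediate perversity $P_{2s\pm 1}$ and running the Siegel/Xie--Higson argument on the part of the chain that got modified.

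For part (a), observe first that outside the range $s<j<n-s+1$ the chains are unchanged, so $\partial$-closure is inherited from $W^{P_{2s\pm 1}}_*(X)$. In the critical range I would check each summand separately. For $W^{P_{2s+1}}_{j}[0](X)$: the piece $W^{P_{2s+1}}_{j}(X)$ is closed by construction; for $U^j_o$, each generator has the form $\tau^{j-s-1}_i * v_i$ with $v_i$ a boundary in the link, and the cone-on-a-boundary computation gives $\partial(\tau*v) = (\partial\tau)*v \pm \tau*(\partial v)$, so that $\partial U^j_o$ lies in $W^{P_{2s+1}}_{j-1}(X)+U^{j-1}_o$; for $U^j_{lag}$ the inductive definition was rigged precisely so that $\partial(\sum \lambda_k u^j_i) = \alpha-\beta$ with $\alpha,\beta\in U^{j-1}_{lag}$, modulo faces that lie strictly inside $W^{P_{2s+1}}_{j-1}(X)$ by the allowability inequality at codimension $2s+1$. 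The base case $j=s+1$ is exactly the conical case checked in Section~3. For $W^{P_{2s-1}}_{j}[0](X)$ one argues dually: ${U^j_{lag}}^{\perp}$ is itself stable under $\partial$ by the same inductive definition, and therefore the complement $W^{P_{2s-1}}_j(X)-{U^j_{lag}}^{\perp}$ is a subcomplex.

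For part (b), by the Siegel filtration argument recalled in Section~3, it suffices to show that the inclusion between the two modified chains at the single intermediate perversity level is a chain equivalence, since all other steps $W^{P_{2j-1}}_*\hookrightarrow W^{P_{2j+1}}_*$ with $j\ne s$ are chain equivalences by the Witt condition at those strata and are unaffected by the modification. The quotient $W^{P_{2s-1}}_*[0](X)/W^{P_{2s+1}}_*[0](X)$ is locally modelled on cones over the intrinsic links, and by the Lagrangian decomposition $V(c_i)=V_{lag}(c_i)\oplus V_{lag}(c_i)^{\perp}\oplus V_o(c_i)$ it is chain contractible: the pieces $U^j_{lag}$ and $U^j_o$ have been pushed into the lower chain, and the $V_{lag}^{\perp}$-part has been removed from the upper chain, so the relative complex has zero homology locally. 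Globalizing via the Mayer--Vietoris argument over stars of simplices, exactly as in Xie--Higson Appendix~C, upgrades this to a geometrically controlled chain equivalence.

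The main obstacle I expect is verifying that the inductive definitions of $U^j_{lag}$ and ${U^j_{lag}}^{\perp}$ are consistent and $\partial$-closed as $j$ grows, because at each stage one is choosing a Lagrangian subspace fibrewise over $Y=\chi_{n-2s-1}$ and then joining with simplices $\tau^{j-s-1}_i\subset\sigma^{n-2s-1}_i$ whose boundary crosses to neighbouring simplices of $Y$. This is precisely where the compatibility hypothesis of Definition~\ref{cl} enters: the maps $\phi_i^*$ must transport $V_{lag}(c_i)$ to $V_{lag}(c_{i+1})$ coherently around every cycle in $Y$, so that the local Lagrangians glue to a well-defined subbundle and the inductive construction produces honest global chains rather than merely fibrewise ones. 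Once this global consistency is in hand, both (a) and (b) reduce to the conical case already handled in Section~3.
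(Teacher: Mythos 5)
Your verification of part (a) is sound and fills in details the paper elides: the paper's justification is essentially the single line "that is the reason why we add the trivial homology class $U_o$," and your breakdown of $\partial(\tau_i*v_i)$ into the $U^{j}_o$ and $U^{j}_{lag}$ cases, together with the observation that the inductive definitions of $U^j_{lag}$ and ${U^j_{lag}}^\perp$ are built precisely so that their boundaries fall into the previous level, captures the intent accurately. (You could make it even more automatic: the inductive condition $\partial(\sum\lambda_k u^j_i)=\alpha-\beta$ with $\alpha,\beta\in U^{j-1}_{lag}$ makes $\partial$-closure tautological at every $j>s+1$, and the base case $j=s+1$ just uses that $v_i$ is a cycle in the link so $\partial(c_i*v_i)=v_i\in W^{P_{2s+1}}_{s}(X)$.)

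For part (b), however, you take a genuinely more complicated route than the paper and miss the simplification that makes the paper's proof essentially trivial. Because $W^{P_{2s-1}}_{j}(X)=W^{P_{2s+1}}_{j}(X)+U^j$ and the critical range has a decomposition $U^j=U^j_{lag}+{U^j_{lag}}^{\perp}+U^j_o$, the two modified complexes coincide \emph{identically}:
\begin{equation*}
W^{P_{2s-1}}_{j}[0](X)=W^{P_{2s+1}}_{j}(X)+U^j-{U^j_{lag}}^{\perp}=W^{P_{2s+1}}_{j}(X)+U^j_{lag}+U^j_o=W^{P_{2s+1}}_{j}[0](X),
\end{equation*}
and outside the critical range both equal the unmodified chains at those dimensions. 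The inclusion is therefore the identity map, and "chain equivalent" is a drastic understatement. Your Mayer--Vietoris argument for chain contractibility of the quotient is not wrong (the quotient is zero, so in particular acyclic), but it does unnecessary work and obscures what is actually happening. The place where Mayer--Vietoris and the Lagrangian structure earn their keep is later, in the Poincar\'e duality lemma and in comparing $\widetilde{W}^{\bar m}_*$ with $\widetilde{W}^{\bar n}_*$ across the \emph{other} filtration steps where the Witt condition is used; for this particular lemma the two complexes are one and the same, which is the point the paper's (terse) proof is making with "they are the same."

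One caveat worth flagging, which your instinct correctly senses: the equality above depends on $U^j$ actually being exhausted by $U^j_{lag}+{U^j_{lag}}^{\perp}+U^j_o$ at every $j$ in the critical range, which is explicit for $j=s+1$ from $V(c_i)=V_{lag}(c_i)\oplus V_{lag}(c_i)^{\perp}\oplus V_o(c_i)$, but is only implicit for $j>s+1$ in the inductive definitions. Your concern about coherence of the fibrewise Lagrangians around cycles in $Y$ (the compatibility condition of Definition~\ref{cl}) is precisely the hypothesis needed to make this decomposition global, so you are pointing at the right gap; but for the purposes of this lemma the right way to close it is to verify the decomposition of $U^j$ persists inductively, not to rerun Mayer--Vietoris.
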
 
Because of $\partial W^{P_{2s+1}}_{s+1}[0](X)\in W^{P_{2s+1}}_{s}[0](X)$. For any subcomplex $Y$ of the $X$, we can let 
$$W^{P_{2s-1}}_{j}[0](Y)=W^{P_{2s-1}}_{j}(Y)\cap W^{P_{2s-1}}_{j}[0](X).$$ 
Then $\partial W^{P_{2s-1}}_{j}[0](Y)\in \partial W^{P_{2s-1}}_{j}(Y)\cap \partial W^{P_{2s-1}}_{j}[0](X).$
So the chain complex are well defined in the $X$. 

For the chain respect to other perversity $P_{2i+1}$ in \ref{p} if $i>s$ , let us define $W^{P_{2i+1}}_{j}[0](X)$ as the first modified chain :
 \begin{equation}
     W^{P_{2i+1}}_{j}[0](X)=W^{P_{2i+1}}_{j}[0](X)+U_{lag}^j +U^j_o.
 \end{equation}
When $i<s$
\begin{equation}
   W^{P_{2i-1}}_{j}[0](X)=W^{P_{2i-1}}_{j}[0](X)-{U^j_{lag}}^{\perp}. 
\end{equation}
We know $W^{P_{2i+1}}_{j}(X)=W^{\bar m}_j(X)$ and $W^{P_{1}}_{j}(X)=W^{\bar n}_j(X)$. 
Because of Witt condition for other codimension, $W^{P_{2i+1}}_{j}[0](X)$ is chain equivalent to $W^{P_{2s-1}}_{j}[0](X)$,  so in fact we build the interpolation chain $W^{\bar{m}}_{*}[0](X)$ between $W^{\bar{n}}_{*}(X)$ and $W^{\bar{m}}_{*}(X)$. However, that is not enough.

\begin{remark}
The intrinsic link of the vertex belong to the dimension $n+1$ is coboundary for intrinsic link of vertex belong to the dimension $n$. 
In the definition of Lagrange structure,  we need to keep the tranversality and 
local triviality. Locally, the neighbourhood of pseudomanifold is trivial product $R\times C(L)$. We expect $U^j$ keep tranversality. Here tranversality is that the preimage of fiber bundle is simplex.

\end{remark}
Next we need to consider the modified chain $W^{\bar m}_*[0]$ may break the Witt condition of other odd codimension. Let us compute the intersection homology of the link  about other odd stratum.

\begin{theorem}\label{abd}
If $k<s$,  on every point $x$ in the odd codimensional stratum $\chi_{n-2k-1}$,  the redefined homology with respect to $W^{m}_{*}[0](X)$ of intrinsic link  $L(\chi_{n-2k-1},x)$  is still trivial, 
 $$\mathrm{IH}^n_{k}[0](L(\chi_{n-2k-1},x),Q)= 0.$$
 \begin{proof}
  This is because when $n-k-1>n-s-1$, the difference in the redefined chains and original chains $W^m_*[0](X) - W^m_*(X)$ is not involved with the $k$-homology of the link $L(\chi_{n-2k-1},x)$.
 \end{proof}
\end{theorem}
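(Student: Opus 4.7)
The plan is to show that passing from $W^{\bar{m}}_{*}(X)$ to $W^{\bar{m}}_{*}[0](X)$ does not change the group that computes the $k$-th intersection homology of $L(\chi_{n-2k-1},x)$ when $k<s$. First I would recall exactly what the modification does: by the construction in equations \eqref{cha1} and \eqref{cha2}, the new chain $W^{\bar{m}}_{j}[0](X)$ differs from $W^{\bar{m}}_{j}(X)$ only in the range $s+1\leq j\leq n-s$, and in those degrees the added generators are joins $\tau^{j-s-1}*v$ where $\tau^{j-s-1}$ is a simplex lying in the special stratum $\chi_{n-2s-1}$ and $v$ is an $s$-cycle in the intrinsic link $L(\chi_{n-2s-1},c)$. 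In particular every new generator is supported in a conical neighbourhood of $\chi_{n-2s-1}$.

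Next I would analyse how $\chi_{n-2s-1}$ sits inside a small neighbourhood of a point $x\in\chi_{n-2k-1}$. Locally $X\cong B^{n-2k-1}\times cL(\chi_{n-2k-1},x)$, and the trace of $\chi_{n-2s-1}$ on this neighbourhood corresponds to a stratum of the link $L(\chi_{n-2k-1},x)$ of codimension $(2s+1)-(2k+1)=2(s-k)$. The hypothesis $k<s$ is precisely $n-k-1>n-s-1$, and it forces this induced stratum in $L(\chi_{n-2k-1},x)$ to have \emph{even} (positive) codimension. Because $\bar{m}$ and $\bar{n}$ take the same value on even integers, the allowability constraints governing $W^{\bar{m}}_{*}$ and $W^{\bar{n}}_{*}$ coincide on this stratum, so nothing gets modified there.

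The third step would be a direct dimension check on the restriction of the added generators. A chain $\tau^{j-s-1}*v$ has dimension $j$ in $X$ and meets $\chi_{n-2s-1}$ in a piece of dimension $j-s-1$. When we restrict to $L(\chi_{n-2k-1},x)$ via the join structure, the chain drops dimension by $2k+1$; so any potential contribution to $W^{\bar{m}}_{k}[0](L(\chi_{n-2k-1},x))$ would require $j=3k+1$ and simultaneously $j\geq s+1$, i.e.\ $k\geq s/3$, \emph{and} the restricted piece would have to be allowable with respect to the even-codimensional stratum identified above. Since $\bar{m}=\bar{n}$ there, the restricted chain already lies in $W^{\bar{m}}_{k}(L(\chi_{n-2k-1},x))$, so no new class appears in $k$-dimensional intersection homology. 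Combining this with Siegel's cone formula used earlier in the paper, one concludes $\mathrm{IH}^{\bar{m}}_{k}[0](L(\chi_{n-2k-1},x))\cong \mathrm{IH}^{\bar{m}}_{k}(L(\chi_{n-2k-1},x))$, and the right-hand side is $0$ by the standing hypothesis that Witt fails only at $\chi_{n-2s-1}$.

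The main obstacle will be the third step: carefully tracking the restriction map from $U^{j}_{\mathrm{lag}}+U^{j}_{o}$ into chains on $L(\chi_{n-2k-1},x)$ and verifying that each restricted generator is expressible inside the unmodified $W^{\bar{m}}_{k}(L(\chi_{n-2k-1},x))$ (using, up to boundaries, the identification between join chains in $X$ and suspended chains in the link). Once that bookkeeping is in place, the even-codimension observation finishes the argument cleanly, and the vanishing of $\mathrm{IH}^{\bar{m}}_{k}[0](L(\chi_{n-2k-1},x);\mathbb{Q})$ follows from the hypothesized Witt condition at $\chi_{n-2k-1}$.
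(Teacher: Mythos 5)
Your proof has a genuine error in its central geometric claim, although the conclusion you reach is correct. You assert that "the trace of $\chi_{n-2s-1}$ on this neighbourhood corresponds to a stratum of the link $L(\chi_{n-2k-1},x)$ of codimension $(2s+1)-(2k+1)=2(s-k)$," and you then build the whole argument around $\bar m = \bar n$ in even codimension. But for $k<s$ the stratum $\chi_{n-2s-1}$ does not appear in the link at all. Indeed $n-2s-1 \le n-2k-3 < n-2k-2$, so $\overline{\chi_{n-2s-1}} \subset X_{n-2s-1}\subset X_{n-2k-2}$, which is disjoint from $\chi_{n-2k-1}=X_{n-2k-1}\setminus X_{n-2k-2}$. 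Hence no point $x\in\chi_{n-2k-1}$ lies in $\overline{\chi_{n-2s-1}}$; in the local model $B^{n-2k-1}\times cL$ the only strata present have dimension $\ge n-2k-1 > n-2s-1$. Your "even positive codimension" stratum is a phantom, and the entire bookkeeping in your third step (restricting join chains, the constraint $j=3k+1$, etc.) is reasoning about an object that is not there. Even setting aside the existence issue, the codimension arithmetic is off: if a stratum $\chi_{n-2s-1}$ did meet the link (which requires $s<k$, the opposite inequality), its trace would have dimension $(n-2s-1)-(n-2k-1)-1=2(k-s)-1$ and hence codimension $2s+1$ in the $2k$-dimensional link, which is odd, not even.

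The paper's one-line proof is the correct and far more direct argument: the added and removed generators in $W^{\bar m}_{*}[0](X)-W^{\bar m}_{*}(X)$ are joins $\tau^{j-s-1}_i * v_i$ supported in the closed star of simplices of $\chi_{n-2s-1}$, and since $\chi_{n-2s-1}$ is disjoint from a neighbourhood of any $x\in\chi_{n-2k-1}$, the modified chain complex restricted to $L(\chi_{n-2k-1},x)$ agrees with the unmodified one. The vanishing then follows immediately from the standing Witt hypothesis at $\chi_{n-2k-1}$, with no need for any perversity comparison on strata of the link.
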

Next we  use mathematics induction to construct the iterated structure. Let $i$ is integer from 0 to $[{n+1}/2]$ 
$$s=s_0 \quad \text{and} \quad s_i=s_0+i.$$ 
The $[i]$ changed chain and $[i]$ changed lower middle perversity intersection homology  is defined as  $W^{P_{2s_i+1}}_{*}[0](X)$ and $\mathrm{IH}^{P_{2s_i+1}}_{*}[i](X)$.   
For each $i$, if the $[i-1]$ redefined lower middle perversity intersection homology about the intrinsic link is trivial,   we do not changed the chain. $$\mathrm{IH}^n_{s_i}[i](L(\chi_{n-2s_i-1},x),Q)=0 .$$  we get : 
\begin{equation} 
   W^{P_{2s_i+1}}_{j}[i](X)=W^{P_{2s_i+1}}_{j}[i-1](X) 
 \end{equation}
and we know from \cite{X} $W^{P_{2s_i+1}}_{j}[i](X)$ is equivalent to $W^{P_{2s_i+1}}_{j}[i](X) 
$

If $[i-1]$ redefined lower middle perversity intersection homology about the intrinsic link is  not trivial, and we assume there is a compatible Lagrange structure. 
$$\mathrm{IH}^{P_{2si-1}}_{s_i}[i-1](L(\chi_{n-2s_i-1},x),Q)=\mathrm{H}_{lag_i} \oplus\mathrm{H}_{lag_i}^{\perp}$$ 
 This means we need to change $ W^{P_{2k+1}}_j[i](X)$ and $W^{P_{2k-1}}_j[i](X)$  because of the nontrivial homology. For $s_i+1<j<n-s_i$, we have 
\begin{equation}
   W^{P_{2s+1}}_{j}[i](X)=W^{P_{2s-1}}_{j}[i](X) +U^j[i]. 
   \end{equation}

Here, the existence of Lagrange structure for the homology of the link make it possible to decompose $U_j$ further. 
Next we use the formula \ref{cha1} and \ref{cha2} to define the changed chain about $ W^{P_{2s_i+1}}_{j}[i](X)$ and $W^{P_{2s_i-1}}_{j}[i](X)$ for $s_i+1<j<n-s_i$:
\begin{equation}
W^{P_{2s_i+1}}_{j}[i](X)=  W^{P_{2s_i+1}}_{j}[i-1](X)+U_{lag}^j[i] +U^j_o[i]. \label{cha1i}
\end{equation}
\begin{equation}
W^{P_{2s_i-1}}_{j}[i](X)=  W^{P_{2s_i-1}}_{j}[i-1](X)-{U_{lag}^j}^{\perp}[i].\label{cha2i}
\end{equation}
The previous theorem \ref{abd} make the $[i]$ changed chain does not affect the $[i-1]$ change.
Specially, let us define $W^{\bar m}_{j}[i](X)$ and $W^{\bar n}_{j}[i](X)$ as :
 $$W^{\bar m}_{j}[i](X)=  W^{\bar m}_{j}[i-1](X)+U_{lag}^j[i] +U^j_o[i].$$ 
$$W^{\bar n}_{j}[i](X)=W^{\bar n}_{j}[i-1](X)-{U^j_{lag}}^{\perp}[i].$$

Let us repeat the actions for all other odd codimensional stratum $n-2h-1$, then we get a sequence of $s_j$ represent for each actions. For every odd codimension $2s_j+1$, we modify based on the Lagrange structure in intersection homology of the link. Let the final interpolation chains be $\widetilde{W}^{\bar{n}}_{*}(X)$
and $\widetilde{W}^{\bar{m}}_{*}(X)$.

\begin{definition}\label{finalw}
Let $s_i$ be the largest possible number such that $2s_i+1\leq n$. 
Define the new chain as:
$$\widetilde{W}^{\bar{m}}_{j}(X) \coloneqq W^{\bar m}_{j}[i](X)$$  
$$\widetilde{W}^{\bar{n}}_{j}(X) \coloneqq W^{\bar n}_{j}[i](X)$$ 
\end{definition}

\begin{cond}\label{condition1}
The oriented non Witt space $X$ in this chapter assume to has compatible Lagrange structure for each odd codimensional stratum to construct $\widetilde{W}^{\bar{n}}_{*}(X)$ and
$\widetilde{W}^{\bar{m}}_{*}(X)$ on $X$.
\end{cond}
Similar, Let $\widetilde{W}_{\bar{n}}^{*}(X) =\mathrm{Hom}(\widetilde{W}^{\bar{n}}_{*},\mathbb{C}),$ and $\widetilde{W}_{\bar{m}}^{*} =\mathrm{Hom}(\widetilde{W}^{\bar{m}}_{*}(X),\mathbb{C})$. Because  the $\mathrm{Hom}(,\mathbb{C})$ functor are exact, then the $\widetilde{W}^{\bar{m}}_{*}(X)$ and $\widetilde{W}^{ \bar{n}}_{*}(X)$ are chain equivalent by the lemma \ref{l7}.  Here the chain map from $\widetilde{W}^{\bar{m}}_{*}(X)$ to $\widetilde{W}^{ \bar{n}}_{*}(X)$ is $g_*$. And because of the construction, the inclusion map are chain equivalence. 
\begin{lemma} \label{l34}
The new chains $\widetilde{W}^{\bar m}_{*}(X)$ and $\widetilde{W}^{\bar n}_{*}(X)$ are chain equivalent respect to the inclusion map $\iota$.

\end{lemma}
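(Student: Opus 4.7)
The plan is to prove Lemma \ref{l34} by induction on the stage index $i$ of the iterative construction, treating each odd codimensional stratum $\chi_{n-2s_i-1}$ one at a time, in the order dictated by the construction in Section \ref{ssnw}. The base case $i=0$ (treating the first non-Witt stratum $\chi_{n-2s-1}$) is precisely the content of Lemma \ref{l7}: the inclusion $W^{P_{2s+1}}_*[0](X)\hookrightarrow W^{P_{2s-1}}_*[0](X)$ is a chain equivalence, and for all $j<s+1$ or $j>n-s$ the chains coincide, while on the middle dimensions the modification was engineered precisely so that $U^j_{lag}+U^j_o$ cancels out the homological contribution of ${U^j_{lag}}^{\perp}$.

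For the inductive step, suppose that $W^{\bar m}_*[i-1](X)\hookrightarrow W^{\bar n}_*[i-1](X)$ is a chain equivalence. I distinguish two cases at stage $i$. First, if the $[i-1]$-modified middle intersection homology of the intrinsic link $L(\chi_{n-2s_i-1},x)$ vanishes, then no change is made at stage $i$; the equivalence between $W^{P_{2s_i+1}}_*[i-1]$ and $W^{P_{2s_i-1}}_*[i-1]$ follows from the analogue of Siegel's argument (\cite{S} Chapter III, Theorem 3.2), applied to the $[i-1]$-modified chains, because the allowability inequality shifts only in codimension $2s_i+1$ and the link-homology vanishes. Second, if the link homology is non-trivial, then by Condition \ref{condition1} the compatible Lagrange structure lets us repeat verbatim the argument of Lemma \ref{l7}: the spaces $U^j_{lag}[i]$, ${U^j_{lag}}^{\perp}[i]$, $U^j_o[i]$ are well defined, the boundary maps respect the decomposition (because the cycles we add are joined with trivial links as in \eqref{eq1}), and the inclusion from the $[i]$-modified lower to the $[i]$-modified upper perversity chain is a chain equivalence. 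Composing the $[i]$-level equivalence with the $[i-1]$-level equivalence from the inductive hypothesis yields the claim at stage $i$. After finitely many stages we arrive at $\widetilde W^{\bar m}_*(X)\hookrightarrow\widetilde W^{\bar n}_*(X)$.

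The main obstacle is verifying that modifications at stage $i$ do not spoil what was arranged at earlier stages: that is, the $[i]$-change does not reintroduce non-trivial middle intersection homology in links of strata $\chi_{n-2s_k-1}$ for $k<i$, and the new chain at stage $i$ still contains a Lagrange-type splitting compatible with earlier ones. This is precisely the role of Theorem \ref{abd} (and its iterated version): the modification at codimension $2s_i+1$ affects only chains whose intersection with that stratum has positive dimension, so it is invisible from the perspective of the link of any deeper, already-handled stratum $\chi_{n-2s_k-1}$ with $s_k<s_i$. Carefully formulating and invoking this non-interference property at each inductive step is the delicate part; once it is in hand, the chain-equivalence conclusion propagates cleanly through the tower of modifications, and the lemma follows.
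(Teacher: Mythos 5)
Your proposal is correct and takes essentially the same route as the paper: induct on the stage index, using Lemma \ref{l7} as the base case, applying Siegel-style equivalence when the modified link homology vanishes and the Lagrange-structure modification when it does not, and invoking Theorem \ref{abd} to guarantee that later modifications do not disturb earlier strata. The paper itself states this lemma very tersely (citing Lemma \ref{l7} and "the construction"), so your explicit induction with the non-interference step spelled out is a faithful expansion of the same argument.
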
 
\begin{remark}
The definition of the repeated interpolated chain is closely related with mezzoperversity for iterative Cheeger boundary conditions at each non-Witt stratum of even codimension of \cite{AAW}:
$$\widetilde{\mathcal{L}}=\left\{W\left(Y_{n-3}\right), W\left(Y_{n-5}\right), \ldots, W\left(Y_{\ell}\right)\right\}$$
\end{remark}

\subsubsection{Cone Formula}\label{cf}
In this subsection, we compute the cone $C(Z)=\tau*Z$ which is the join complex with $\tau$. Because of the requirement of diagonal approximation, we only need to consider $\tau\in\chi_{j}$, here $\mathrm{dim}(z)=j$ and $z$ is the subcomplex of $L(\tau,\chi_j)$. 
The regular cone formula for the intersection homology for perversity $\bar p$ is :
\begin{equation}
\mathrm{IH}^{\bar m}_{i}(C(Z))=
\begin{cases}
\mathrm{IH}^{\bar m}_{i}(Z) \quad  i<j-\bar m(j+1)  \\
0 \quad  otherwise.
\end{cases}\end{equation}
If  $j<2s_0$,  we know $\widetilde{W}^{\bar m}_j(X)=W^{\bar m}_{j}(X)$ when $j<s_0+1$, so the cone formula is unchanged related $\widetilde{W}^{ \bar n}_{i}(C(Z))$ and $\widetilde{W}^{ \bar{m}}_{i}(C(Z))$ : 
\begin{equation}
\widetilde{\mathrm{IH}^{\bar m}_{i}}(C(Z))=
\begin{cases}
\mathrm{IH}^{\bar m}_{i}(Z) \quad  i<j- \bar m(j+1)  \\
0 \quad  otherwise.
\end{cases}\label{c3}\end{equation}
The codim of $(\tau)$ with respect to $Z$ is $j+1$ , for the allowable condition of vertex $\tau$,  we can get 
$$i-(j+1)+ \bar{m} (j+1)<0 $$
If the allowable condition is satisfied, $\mathrm{IH}^{\bar m}_{i}(C(X))=W^{\bar m}_{i}(C(X)-\{v\}).$ Because the the chain $\widetilde{W^{\bar{m}}_{*}}(X)$ is an interpolation of $W^{\bar{m}}_{*}(X)$  and $W^{\bar{n}}_{*}(X)$, it is easy to prove that when $i>j-\bar{m} (j+1)$ we have 
\begin{equation} \label{kc1}
\widetilde{\mathrm{IH}}^{\bar m}_{i}(C(Z))=0.
\end{equation}
Because  $\bar m(j+1)= \bar n(j+1)$ for $j$ is odd.  So when $j=2s$, the only difference happen in dimension $s$:
$$\widetilde{\mathrm{IH}^{\bar m}_{s}}(C(Z))\neq {\mathrm{IH}^{\bar n}_{s}}(C(Z))$$
\begin{lemma}\label{cl1}
When $j=2s_k$ and $\mathrm{IH}^n_{s_k}[k](L(\chi_{n-2s_k-1},x),Q)=0$, we have: $\widetilde{\mathrm{IH}^{\bar m}_{s_k}}(C(Z))=0.$ When $j=2s_k$ and $\mathrm{IH}^n_{s_k}[k](L(\chi_{n-2s_k-1},x),Q)=\mathrm{H}_{lag_k}\oplus\mathrm{H}_{lag_k}^{\perp}$, then $\widetilde{\mathrm{IH}^{\bar m}_{s_k}}(C(Z))=\mathrm{H}_{lag_k}.$
\begin{proof}
When $i=s_k+1$, the construction of $\widetilde W^{\bar m}_{i}(C(Z))$ is chain equivalence to $W^{P_{2s_k+1}}_{i}[k](C(Z))$, we know $\mathrm{IH}^{P_{2s_k+1}}_{i}[k](C(Z))$ depends on the Lagrange structure. This construction means the cone formula changed on the dimension $s_k$.
\end{proof}
\end{lemma}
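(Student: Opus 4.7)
The plan is to first reduce the cone computation to the single modification stage relevant to the cone point $\tau$. Since $\tau$ has codimension $2s_k+1$ in $C(Z)$, among the modifications $[0],[1],\dots$ used to build $\widetilde{W}^{\bar m}_*$ only the $[k]$-th stage acts on chains touching $\tau$; the others take place at disjoint odd codimensions $2s_i+1\neq 2s_k+1$. Iteratively applying Lemma \ref{l7} (which says that passing from one stage to the next is a chain equivalence whenever the corresponding Witt condition holds) I would show that $\widetilde W^{\bar m}_*(C(Z))$ is chain equivalent to $W^{\bar P_{2s_k+1}}_*[k](C(Z))$. This reduces the problem to understanding a single-stage modification around the cone vertex.

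Next I would examine the two relevant dimensions $s_k$ and $s_k+1$ directly from formulas \eqref{cha1i}--\eqref{cha2i}. At dimension $s_k$ nothing is added, so $\widetilde W^{\bar m}_{s_k}(C(Z))=W^{\bar m}_{s_k}(C(Z))$ and the $s_k$-cycles on $C(Z)$ are, up to cone correction, the $s_k$-cycles on the link $Z$, whose homology classes record $\mathrm{IH}^{P_{2s_k-1}}_{s_k}[k-1](Z)$. At dimension $s_k+1$ the space is enlarged by $U^{s_k+1}_{lag}[k]+U^{s_k+1}_o[k]$, whose elements are joins $\tau\ast v$ with $v$ an $s_k$-cycle representing either a class in $\mathrm{H}_{lag_k}$ or a trivially bounding class. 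The key computation $\partial(\tau\ast v)=v$ for a cycle $v$ then shows that the Lagrange half (together with the already-trivial half) becomes boundaries after modification, while the classical cone formula \eqref{c3} continues to show that no new $s_k$-cycles are created.

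Assembling these ingredients finishes both cases. In the $[k]$-Witt case $\mathrm{IH}^n_{s_k}[k](L,x)=0$, there is no Lagrange subspace to cone off, the added space is empty, and the standard cone vanishing carries over to give $\widetilde{\mathrm{IH}}^{\bar m}_{s_k}(C(Z))=0$. In the non-Witt case with $\mathrm{IH}^n_{s_k}[k](L,x)=\mathrm{H}_{lag_k}\oplus\mathrm{H}_{lag_k}^\perp$, precisely one Lagrange summand is bounded by the newly added cone chains, leaving the complementary half (isomorphic to $\mathrm{H}_{lag_k}$) as the surviving homology. The main obstacle I expect is the bookkeeping behind the initial reduction: one must check that the earlier modifications $[0],\dots,[k-1]$ at lower odd codimensions have not disturbed the middle-dimensional intersection homology of the link $Z$ that is used to define $\mathrm{H}_{lag_k}$. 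This is exactly what Theorem \ref{abd} is designed to guarantee, so the compatible Lagrange structure at stage $[k]$ is well-defined and the cone formula isolates precisely that piece.
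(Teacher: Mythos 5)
Your proposal is correct and follows essentially the same route as the paper's terse three-sentence sketch: reduce to the single $[k]$-th modification stage acting near the cone vertex, then observe that the freshly added cone chains $\tau * v$ (with $v$ a Lagrangian $s_k$-cycle in the link) supply new boundaries $\partial(\tau*v)=v$, which changes the cone formula at dimension $s_k$ while leaving the other dimensions unaffected. You are in fact slightly more careful than the paper at one point: since the classes of $\mathrm{H}_{lag_k}$ are precisely the ones now coned off, what survives in $\widetilde{\mathrm{IH}}^{\bar m}_{s_k}(C(Z))$ is the complementary summand $\mathrm{H}_{lag_k}^{\perp}$, which is isomorphic but not literally equal to $\mathrm{H}_{lag_k}$, and your phrase ``the complementary half (isomorphic to $\mathrm{H}_{lag_k}$)'' correctly records this distinction that the statement of the lemma glosses over. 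One small attribution slip on your side: Lemma \ref{l7} asserts the chain equivalence between the modified lower and upper middle perversity chains at a single stage, not the equivalence when passing through stages where the Witt condition holds; the latter is Siegel's chain equivalence theorem invoked in the paper near Lemma \ref{l34}, so the reduction you describe draws on both.
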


\begin{lemma}\label{cl2}
When  $j<2s_k$, the $\mathrm{IH}^n_{s_k}[k](C(Z))=\mathrm{IH}^n_{s_k}[k](Z)$. 
\begin{proof}
Let us see the allowable condition respect to the conical singular stratum. Because the allowable condition is unchanged we can conclude that the intersection homology is same as usual. 
\end{proof}
\end{lemma}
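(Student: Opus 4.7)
The plan is to establish Lemma \ref{cl2} by tracking how each iterative modification in the definition of $\widetilde{W}^{\bar{m}}_{*}(X)$ interacts with the standard cone formula for intersection chains on a cone $C(Z)=\tau*Z$ of small dimension. The guiding intuition is that each of the $[0], [1], \ldots, [k]$ adjustments is keyed to a specific odd-codimensional stratum and only modifies chains in a precise dimensional band, so when the cone is shallow enough (namely $\dim Z = j < 2s_{k}$), none of these adjustments is activated inside the cone.

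First I would recall the standard cone formula for the lower middle perversity, $\mathrm{IH}^{\bar{m}}_{i}(C(Z)) = \mathrm{IH}^{\bar{m}}_{i}(Z)$ for $i < j-\bar{m}(j+1)$ and $0$ otherwise, and observe that the assertion in the lemma is consistent with this formula at index $i=s_{k}$ once one checks that no modified chains enter at the relevant level. Then I would analyze the construction in Section \ref{ssnw}: at iteration $l$, the difference between $W^{\bar{m}}[l]_{*}$ and $W^{\bar{m}}[l-1]_{*}$ is carried by spaces $U_{\mathrm{lag}}^{*}[l]$ and $U_{o}^{*}[l]$, whose basis elements are joins $\tau^{*-s_{l}-1} * v$ meeting the stratum $\chi_{n-2s_{l}-1}$ in the maximal dimension allowed by $\bar{P}_{2s_{l}-1}$ but not by $\bar{P}_{2s_{l}+1}$. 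In particular, such a chain must have dimension at least $s_{l}+1$ and must actually intersect the stratum $\chi_{n-2s_{l}-1}$.

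Next I would verify that for $\dim Z = j < 2s_{k}$ the cone $C(Z)$ lies in the star of the single simplex $\tau$, and by construction the apex $\tau$ sits in a stratum whose codimension is too large for any $U_{\mathrm{lag}}^{*}[k]$ or ${U_{\mathrm{lag}}^{*}}^{\perp}[k]$ adjustment to produce a nonzero restriction to $C(Z)$; the allowable inequality at codimension $2s_{k}+1$ simply has no solution in this range. A parallel argument, combined with Theorem \ref{abd}, handles the earlier iterations $l<k$: since $s_{l}<s_{k}$ forces the bound $j<2s_{l}$ to be \emph{strictly} tighter, each earlier modification is also vacuous inside $C(Z)$. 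Consequently $\widetilde{W}^{\bar{m}}[k]_{*}(C(Z))$ coincides with the ordinary $W^{\bar{m}}_{*}(C(Z))$ throughout the cone, so the standard cone formula applies in dimension $s_{k}$ and delivers $\widetilde{\mathrm{IH}}^{\bar{m}}[k]_{s_{k}}(C(Z)) = \widetilde{\mathrm{IH}}^{\bar{m}}[k]_{s_{k}}(Z)$, identifying both sides with $\mathrm{IH}^{\bar{m}}_{s_{k}}(Z)$ (or $0$) as dictated by the allowable condition.

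The main obstacle I anticipate is the dimensional bookkeeping across the iterated modifications: one must verify carefully, stratum by stratum, that for every $l\leq k$ the chains inserted or removed in going from $W^{\bar{m}}[l-1]_{*}$ to $W^{\bar{m}}[l]_{*}$ really have trivial intersection with the cone when $j<2s_{k}$. The subtle point is not the single-step inclusion, which follows from examining $\bar{P}_{2s_{l}\pm 1}(2s_{l}+1)$, but the compatibility of the \emph{compatible Lagrange decomposition} from Subsection \ref{compat} along the iteration: one must ensure that the transport isomorphisms $\phi_{i}^{*}$ used to glue $V_{\mathrm{lag}}(L(c_{i}))$ across points of $\chi_{n-2s_{l}-1}$ do not accidentally introduce a nontrivial cycle in $C(Z)$ at dimension $s_{k}$ through a mixed-stratum intersection.
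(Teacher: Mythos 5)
Your overall strategy—showing the $[k]$-modification is inactive on $C(Z)$ because the relevant codimension does not occur—matches the paper's intent, which argues simply that the allowable condition at the cone vertex is untouched since $\dim C(Z) = j+1 < 2s_k+1$. However, there is a concrete logical error in your attempt to dispose of the earlier iterations. You write that ``since $s_l < s_k$ forces the bound $j<2s_l$ to be strictly tighter, each earlier modification is also vacuous inside $C(Z)$.'' This has the implication backwards: $s_l < s_k$ gives $2s_l < 2s_k$, so $j < 2s_k$ does \emph{not} imply $j < 2s_l$. For any $l < k$ with $2s_l \leq j < 2s_k$, the stratum $\chi_{n-2s_l-1}$ can perfectly well meet $C(Z)$, and the $[l]$-stage modification is genuinely active there. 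Consequently your conclusion that $\widetilde{W}^{\bar m}[k]_*(C(Z))$ ``coincides with the ordinary $W^{\bar m}_*(C(Z))$ throughout the cone'' is false in that range, and the appeal to the \emph{standard} (unmodified) cone formula does not go through.

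The paper's one-line proof avoids this trap: it claims only that the $[k]$-step leaves the allowable condition at the apex unchanged, hence the cone formula already established for the $[k-1]$-modified chain (via Lemma \ref{cl1} and the surrounding cone-formula discussion for lower stages) persists after the $[k]$-step. That is, the lemma should be read as an inductive step relating $[k]$ back to $[k-1]$ at the apex, not as a claim that all modifications evaporate. To repair your argument you would: (i) prove the $[k]$-modification is vacuous on $C(Z)$ and on $Z$ (this part of your reasoning is fine), giving $\mathrm{IH}^n_{s_k}[k](C(Z)) = \mathrm{IH}^n_{s_k}[k-1](C(Z))$ and similarly for $Z$; and then (ii) invoke the cone formula for the $[k-1]$-modified chain, which the paper builds up by induction through the preceding lemmas, rather than the unmodified one. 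Finally, the ``main obstacle'' you flag about the transport isomorphisms $\phi_i^*$ from Subsection \ref{compat} is irrelevant here; this lemma is purely dimensional bookkeeping and does not touch the Lagrange gluing.
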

Similarity, Let Borel-Moore intersection homology (intersection homology with compact support )  be $BM[\widetilde{\mathrm{IH}^{\bar m}_{i}}]$. When $j<2s_0$, $BM[\widetilde{\mathrm{IH}^{\bar m}_{i}}]$ of the cone is that :
\begin{equation}
BM[\widetilde{\mathrm{IH}^{\bar m}_{i}}](C(Z))=
\begin{cases}
BM[\widetilde{\mathrm{IH}^{\bar m}_{i}}](Z) \quad  i\geq j-\bar m(j+1)\\
0 \quad  \text{otherwise}.
\end{cases}
\end{equation}
If $j=2s_0$, the construction of $BM[\widetilde W^{\bar m}_{j}](C(Z))$ is chain equivalence to $BM[W^{P_{2s_0+1}}_{j}[0]](C(Z))$ \\
This means the cone formula changed on the dimension $s_0$:
\begin{equation}
BM[\widetilde{\mathrm{IH}^{\bar m}_{i}}](C(Z))=
\begin{cases}
BM[\widetilde{\mathrm{IH}^{\bar m}_{i}}](Z) \quad  i>s_0\\
\mathrm{H}_{lag_0}(Z)\in BM[\mathrm{IH}^{\bar m}_{s0}](Z) \quad i=s_0 \\
0 \quad  \text{otherwise}.
\end{cases}
\label{c4}    
\end{equation}
When $j=2s_k$ and $\mathrm{IH}^n_{s_i}[i](L(\chi_{n-2s_i-1},x),Q)=0 $ , then we know it is similar with \ref{c3} although the chain $\widetilde{W^{\bar m}_j}(X)$ changed. 
\begin{equation}
BM[\widetilde{\mathrm{IH}^{\bar m}_{i}}](C(Z))=
\begin{cases}
BM[\widetilde{\mathrm{IH}^{\bar m}_{i}}](Z) \quad  i\geq j-m(j+1)  \\
0 \quad  otherwise.
\end{cases}\label{c5}\end{equation}
For $j=2s_k$ and $\mathrm{IH}^n_{s_i}[i](L(\chi_{n-2s_i-1},x),Q)\neq0 $,  the construction of $\widetilde{W^{\bar m}_{s_k}(C(Z))}$ means the cone formula changed on the dimension $s_k$:
\begin{equation}
BM[\widetilde{\mathrm{IH}^{\bar m}_{i}}](C(Z))=
\begin{cases}
BM[\widetilde{\mathrm{IH}^{\bar m}_{i}}](Z) \quad  i>s_k\\
\mathrm{H}_{lagk}\in BM[\mathrm{IH}^{\bar m}_{sk}](Z) \quad i=s_k \\
0 \quad  \text{otherwise}.
\end{cases}
\label{c6}    
\end{equation}
Hence, we in fact the cone formula build the locally duality with boundary in the cone . 

\subsubsection{Diagonal Approximation} In order to prove the Poincare duality with respect to $\widetilde W^{m}_{j}(X)$, we need to build the diagonal approximation for the new chain $\widetilde W^{m}_{j}(X)$. A diagonal approximation should be a natural chain homomorphism for the category of filtered simplicial complexes and placid simplicial maps
and $\bigtriangleup(X)=x\otimes x$ for any $(\bar 0,0)$-allowable simplex. $\bar{0}=0$ is the zero perversity.

Because of \cite{X}
, there exists an unique diagonal approximation map  up to homotopy
 for lower middle perversity and upper middle perversity :
$$\bigtriangleup : W^{\bar{0}}_{*}(X)\to W^{\bar{n}}_{*}(X) \otimes W^{\bar{m}}_{*}(X).$$ 

What we need is to construct the new diagonal approximation 
$\widetilde \bigtriangleup$ decompose $W^{\bar{0}}_{*}(X) $  to $\widetilde{W}^{\bar{m}}_{*}(X)\otimes \widetilde{W}^{ \bar{n}}_{*}(X).$ Like the conical singularity, the key of diagonal approximation is cone formula for $\widetilde W^{\bar m}_{j}(X)$ and $\widetilde{W}^{ \bar{n}}_{*}(X)$ we defined before. 


\begin{lemma}
There exists a diagonal approximation $\widetilde{\bigtriangleup}$ for the  chains complex $\widetilde{W}^{\bar{m}}_{j}(X)$ and $\widetilde{W}^{ \bar{n}}_{j}(X)$ in the sense of \cite{X}. 
$$\widetilde \bigtriangleup :W^{\bar{0}}_{*}(X)\to \widetilde{W}^{\bar{m}}_{*}(X)\otimes \widetilde{W}^{ \bar{n}}_{*}(X).$$
Moreover $\widetilde{\bigtriangleup}$ is unique up to chain homotopy.
\begin{proof}
It is the application of method of acyclic models.

Because the basis element of $W^0_{s+1}(X)$ is minimal and modeled over the cone, it is enough to consider the case $W^0_{s+1}(C(Z))$ for some cone $C(Z)$:
$$\bigtriangleup_j :W^{\bar{0}}_{j}(C(Z))\to\widetilde{W}^{\bar{m}}_{*}(C(Z)\otimes\widetilde{W}^{\bar{n}}_{*}(C(Z)).$$ 
Recall that we only modified  ${W}^{\bar{m}}_{j}[i](C(Z))$ and ${W}^{\bar{n}}_{j}[i](C(Z))$ when $s_i<j<n-s_i+1$. When the dimension $j$ is less than $s_0+1=s+1$, the diagonal approximation map $\widetilde{\bigtriangleup}_j$  is the original $\bigtriangleup_j$ which is defined in the article \cite{X}. 

Let us use the mathematical induction to deal with dimension from $s+1$ to $n$. 

For $s<j<n$, because $\bigtriangleup_{j}$ should be a chain morphism, this means $$\partial(\bigtriangleup_{j}(\partial \omega))=0.$$ So $\bigtriangleup_{j}(\partial \omega)$ is a cycle in $\widetilde{W}^{\bar{m}}_{*}(C(Z)\otimes\widetilde{W}^{\bar{n}}_{*}(C(Z))$.  Because of the lemma \ref{l5}, the trivial homology  means $\bigtriangleup_{j}(\partial \omega)$  is a boundary of $$\zeta\in (\widetilde{W}^{ \bar{m}}_{*}(C(Z)\otimes \widetilde{W}^{ \bar{n}}_{*}(C(Z))_{j+1}.$$ 
That is: $\bigtriangleup_{2s}(\partial \omega) =\partial \zeta.$ So we can define : $$\bigtriangleup_{j+1}(\omega)=\zeta.$$ 
This prove the existence of the modified diagonal approximation.

Next we prove the uniqueness up to homotopy. Let us assume there is another diagonal approximation $\widetilde\bigtriangleup' $. Then we can inductively construct the homotopy $h_i$ between $\widetilde\bigtriangleup $ and $\widetilde\bigtriangleup' $. Because of $\widetilde\bigtriangleup_0 = \widetilde\bigtriangleup'_0.$
then we define $h_0=0$. If we have defined $h_i$ for all $i<s$, we need to build $h_{i+1}$, then we get $\partial h_{i+1}=\Delta_{i+1}-\Delta_{i+1}^{\prime}-h_{i} \partial$. When $\xi$ is a basis of $W_{i+1}^{\overline{0}}(X)$,  $\partial h_{i+1}\xi =(\Delta_{i+1}-\Delta_{i+1}^{\prime}-h_{i} \partial) \xi$ is still a cycle. 
Then due to lemma \ref{l5}, we know $\partial h_{i+1}\xi$ should be a boundary of $\zeta$ in $\widetilde{W}_{*}^{\bar{m}}(X) \otimes \widetilde{W}_{*}^{\bar{n}}(X))_{i+2}$. Then it is natural to define $h_{i+1}(\omega) =\zeta$. We have define a chain homotopy ${h_j}$ between $\widetilde\bigtriangleup $ and $\widetilde\bigtriangleup' $.
\end{proof}
\end{lemma}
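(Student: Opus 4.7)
The plan is to use the method of acyclic models, adapting the Higson--Xie construction from \cite{X} Appendix B to the modified chain complexes. Since every basis element of $W^{\bar 0}_*(X)$ is minimal and modelled on a cone $C(Z)$ via a placid simplicial map, and since naturality under placid maps is required of $\widetilde{\bigtriangleup}$, it suffices to construct $\widetilde{\bigtriangleup}$ on each standard cone $C(Z)$ and then transport it to the whole space. At degree zero, any $(\bar 0,0)$-allowable vertex lies in the regular part, so $\widetilde{\bigtriangleup}_0(x) = x \otimes x$ sits in $\widetilde{W}^{\bar m}_0(X) \otimes \widetilde{W}^{\bar n}_0(X)$ without issue, and in degrees $k \leq s_0$ the modified chains agree with the original ones so $\widetilde{\bigtriangleup}_k$ can be set equal to the Higson--Xie $\bigtriangleup_k$ verbatim.

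For the inductive step at degree $k$ in a cone $C(Z)$, a basis chain $\omega \in W^{\bar 0}_k(C(Z))$ yields a cycle $\widetilde{\bigtriangleup}_{k-1}(\partial \omega)$ in $(\widetilde{W}^{\bar m}_*(C(Z)) \otimes \widetilde{W}^{\bar n}_*(C(Z)))_{k-1}$, and the existence problem reduces to an acyclicity statement of the form
$$H_j\!\left(\widetilde{W}^{\bar m}_*(C(Z)) \otimes \widetilde{W}^{\bar n}_*(C(Z))\right) = 0 \qquad \text{for } j \geq \dim Z,$$
whereupon one lifts the cycle to $\zeta$ and sets $\widetilde{\bigtriangleup}_k(\omega) = \zeta$. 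By the algebraic K\"unneth formula the target splits as $\bigoplus_i \widetilde{\mathrm{IH}}^{\bar m}_i(C(Z)) \otimes \widetilde{\mathrm{IH}}^{\bar n}_{j-i}(C(Z))$, so we are reduced to the modified cone formulas of Section 6.1.2. Outside the critical strata the cone formulas \ref{c3}, \ref{kc1}, and Lemma \ref{cl2} force vanishing of these intersection homologies in the relevant range, and acyclicity is automatic.

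The main obstacle is the boundary case $\dim Z = 2 s_k$, where Lemma \ref{cl1} leaves a surviving Lagrangian piece $\mathrm{H}_{\mathrm{lag}_k}$ in the middle degree of each tensor factor, so that $\mathrm{H}_{\mathrm{lag}_k} \otimes \mathrm{H}_{\mathrm{lag}_k}$ could a priori contribute a nontrivial class in total degree $2 s_k$. The resolution uses that $\mathrm{H}_{\mathrm{lag}_k}$ is isotropic for the intersection pairing, together with the compatibility of the Lagrangian choice across strata (Condition \ref{condition1}): any candidate obstruction $h \otimes g$ with both $h, g \in \mathrm{H}_{\mathrm{lag}_k}$ cannot appear in the image of a diagonal, because the intersection product vanishes on the Lagrangian subspace; hence the relevant cycle is in fact a boundary in the tensor complex. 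Uniqueness is then a parallel induction: given two diagonals $\widetilde{\bigtriangleup}$ and $\widetilde{\bigtriangleup}'$, set $h_0 = 0$, and at each stage the expression $(\widetilde{\bigtriangleup}_{i+1} - \widetilde{\bigtriangleup}'_{i+1} - h_i\partial)(\xi)$ is a cycle in the same tensor complex; the same acyclicity lemma produces a $\zeta$ with $\partial \zeta$ equal to it, and one defines $h_{i+1}(\xi) = \zeta$.
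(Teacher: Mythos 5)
Your proof follows the same acyclic-models strategy as the paper: reduce to cones via minimality of basis elements and naturality under placid maps, use the algebraic K\"unneth formula together with the modified cone formulas to get the needed acyclicity, dispose of the potential $\mathrm{H}_{\mathrm{lag}_k}\otimes\mathrm{H}_{\mathrm{lag}_k}$ obstruction in total degree $2s_k$ via the Lagrangian structure, and prove uniqueness by the parallel homotopy induction. You make explicit that isotropy of $\mathrm{H}_{\mathrm{lag}_k}$ under the intersection pairing is the reason the middle-degree tensor term cannot survive in the image of the diagonal, a point the paper invokes only by citing ``the Lagrangian structure''; otherwise the two arguments coincide.
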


\begin{lemma}\label{l5}
 Let $Z\in X$ be a $i-$simplex. If the diagonal approximation map $\widetilde\bigtriangleup$ exist for $Z$: $$\widetilde\bigtriangleup :W^{\bar{0}}_{*}(Z)\to\widetilde{W}^{\bar{m}}_{*}(Z)\otimes\widetilde{W}^{\bar{n}}_{*}(Z).$$  
for the image of diagonal approximation,  we can get  $$\mathrm{H_{k}}(\widetilde{W}^{ \bar{m}}_{*}(C(Z)\otimes \widetilde{W}^{ \bar{n}}_{*}(C(Z))=0,$$ for $k\geq i$.

\begin{proof}
First we use the algebraic Kunneth formula
: $$\mathrm{H}_{k}(\widetilde{W}^{ \bar{m}}_{*}(C(Z)\otimes \widetilde{W}^{ \bar{n}}_{*}(C(Z))=\bigoplus\limits_{j}\widetilde{\mathrm{IH}}^{m}_{j}(C(Z)) \otimes\widetilde{\mathrm{IH}}^{n}_{k-j}(C(Z)),$$
Because of \ref{kc1} and cone formula \ref{c3}, when $j\!>\!i-\!\bar{m}(i+1)$ we get :
$$\widetilde{\mathrm{IH}}^{\bar m}_{j}(C(Z))=\widetilde{\mathrm{IH}}^{\bar n}_{j}(C(Z))=0$$
Hence $k>i$, we can find the 
$$\mathrm{H}_k(\widetilde{W}^{ \bar{n}}_{*}(C(Z)\otimes \widetilde{W}^{ \bar{n}}_{*}(C(Z))=0.$$

Next we need to prove $\mathrm{H}_i(\widetilde{W}^{ \bar{n}}_{*}(C(Z)\otimes \widetilde{W}^{ \bar{n}}_{*}(C(Z))=0.$ When dimension of $Z$ is $i<2s_0$, because of the cone formula \ref{c3}, it is clear the equation holds.  

When the dimension of $Z$ is $2s_0$ or more generally $2s_k$ , we need to use the new cone formula lemma:
\begin{equation}
\widetilde{\mathrm{IH}}^{\bar m}_{i}(C(Z))=
\begin{cases}
\widetilde{\mathrm{IH}}^{\bar m}_{i}(Z) \quad  i<s_k\\
\mathrm{H}_{lag_k}(Z)\in \mathrm{IH}^{\bar m}_{s_k}(Z) \quad i=s_k \\
0 \quad  \text{otherwise}.
\end{cases}
\end{equation}
If $H_{2s_k}(\widetilde{W}^{ \bar{n}}_{*}(C(Z)\otimes \widetilde{W}^{ \bar{n}}_{*}(C(Z))\neq 0$, from algebraic Kunneth formula we find the only possible nontrival generator is from $\widetilde{W}^{ \bar{m}}_{s_k}(C(Z)\otimes \widetilde{W}^{ \bar{n}}_{s_k}(C(Z))$.  Let us use $h_i\otimes h_j$ to represent the nonzero term. However, consider the 2$s_k$-dimensional diagonal approximation  $\widetilde\bigtriangleup_{2s_k}$ ,  there does not exist $h_i\otimes h_j$ in the image of $\bigtriangleup_{2s_k}$ because of the Lagrangian Structure of $\mathrm{IH}^{m}_{s_k}(Z)$.  So we get
$$\mathrm{H}_{2s_k}(\widetilde{W}^{ \bar{n}}_{*}(C(Z)\otimes \widetilde{W}^{ \bar{n}}_{*}(C(Z))=0.$$

For the dimension of $Z$ is odd or not equal to $2s_k$, because of the cone formula in section \ref{cf} it is similar to prove : $$\mathrm{H}_{i}(\widetilde{W}^{ \bar{n}}_{*}(C(Z)\otimes \widetilde{W}^{ \bar{n}}_{*}(C(Z))=0.$$ 
\end{proof}
\end{lemma}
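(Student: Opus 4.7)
The plan is to apply the algebraic K\"unneth formula
$$H_{k}\!\left(\widetilde W^{\bar m}_{*}(C(Z))\otimes\widetilde W^{\bar n}_{*}(C(Z))\right)=\bigoplus_{j}\widetilde{\mathrm{IH}}^{\bar m}_{j}(C(Z))\otimes\widetilde{\mathrm{IH}}^{\bar n}_{k-j}(C(Z)),$$
and then read off each summand from the cone-formula computations of the preceding subsection. The whole argument is a case analysis driven by the degree at which $\widetilde{\mathrm{IH}}^{\bar m}_{*}(C(Z))$ and $\widetilde{\mathrm{IH}}^{\bar n}_{*}(C(Z))$ can be non-zero.

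In the regime $k>i=\dim Z$, I would invoke the cone formulas (\ref{c3})--(\ref{c6}): $\widetilde{\mathrm{IH}}^{\bar m}_{j}(C(Z))$ is supported in degrees $j<i-\bar m(i+1)$, with at most one extra middle-degree contribution given by $\mathrm{H}_{lag_k}$ when $i=2s_k$ carries non-trivial Lagrangian data, and similarly for $\widetilde{\mathrm{IH}}^{\bar n}$. Since $\bar m(i+1)$ and $\bar n(i+1)$ are both close to $i/2-1$, both tensor factors are forced into degrees no bigger than roughly $i/2$, so $k=j+(k-j)$ cannot exceed $i$. Hence the full K\"unneth sum vanishes for $k>i$.

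The critical case is $k=i$. If $i$ is odd, or $i=2s_k$ but the Witt condition holds at the link that step-$k$ modified, then the cone-formula inequalities again force vanishing. The only remaining possibility is $i=2s_k$ with a genuinely non-trivial Lagrangian $\mathrm{H}_{lag_k}$; in that case \eqref{c4}/\eqref{c6} give $\widetilde{\mathrm{IH}}^{\bar m}_{s_k}(C(Z))=\widetilde{\mathrm{IH}}^{\bar n}_{s_k}(C(Z))=\mathrm{H}_{lag_k}$, so K\"unneth now produces a potentially non-zero term $\mathrm{H}_{lag_k}\otimes \mathrm{H}_{lag_k}$ sitting in degree $2s_k$. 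To kill it I would use exactly the clause ``for the image of diagonal approximation'': the middle-degree component of $\widetilde{\bigtriangleup}_{2s_k}[C(Z)]$ encodes the intersection pairing on $\widetilde{\mathrm{IH}}^{\bar m}_{s_k}$, and this pairing vanishes identically on $\mathrm{H}_{lag_k}\otimes \mathrm{H}_{lag_k}$ by definition of a Lagrangian. Consequently no element $h_1\otimes h_2$ with both $h_1,h_2\in \mathrm{H}_{lag_k}$ can appear in the image of $\widetilde{\bigtriangleup}_{2s_k}$, so the contribution is excluded and the relevant homology vanishes at $k=i$ as well.

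The main obstacle, in my view, is the last step: carefully identifying the middle-degree part of $\widetilde{\bigtriangleup}[C(Z)]$ with the intersection pairing on $\mathrm{H}_{lag_k}$ so that Lagrangianity genuinely rules out the diagonal $\mathrm{H}_{lag_k}\otimes \mathrm{H}_{lag_k}$ contribution in the image. Once that pairing--diagonal correspondence is set up, everything else is routine cone-formula bookkeeping combined with K\"unneth, and the vanishing for $k\geq i$ follows.
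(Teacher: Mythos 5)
Your proof follows essentially the same route as the paper's: apply the algebraic Künneth formula, read off the vanishing ranges from the cone formulas \eqref{c3}--\eqref{c6} to dispose of $k>i$ and the cases where the Witt condition holds, and then isolate the remaining case $k=i=2s_k$ with nontrivial Lagrangian data, which you kill exactly as the paper does — by observing that the Lagrangian structure forces the image of $\widetilde\bigtriangleup_{2s_k}$ to avoid the potential $\mathrm{H}_{lag_k}\otimes\mathrm{H}_{lag_k}$ contribution. The ``main obstacle'' you flag (carefully tying the middle-degree component of $\widetilde\bigtriangleup[C(Z)]$ to the intersection pairing) is precisely the step the paper also leaves at the level of assertion, so there is no divergence in substance.
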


\subsubsection{Poincare Duality}
Similar, we will prove the Poincare duality in the category of geometric module.
With help of diagonal approximation, we can define the cap product by that:
\begin{equation}
  \widetilde\cap:\widetilde{W}_{ \bar{n}}^{j}(X)\otimes {W}^{0}_{n}(X) \xrightarrow{i \otimes \widetilde\bigtriangleup} {W}_{ \bar{n}}^{j}(X)\otimes (\widetilde{W}^{\bar{n}}_{*}(X) \otimes \widetilde{W}^{\bar{m}}_{*}(X))\xrightarrow{\varepsilon\otimes1}{W}^{\bar{m}}_{n-j}(X). 
\end{equation}
here the $\varepsilon : \widetilde{W}^{\bar{n}}_{*}(X) \otimes \widetilde{W}_{\bar{n}}^*(X)\to \mathbb{C}$ is the evaluation map. Define the Poincare dual map $\mathbb{P}$ is cap product with fundamental class $[X]$:
$$\widetilde{\mathbb{P}}:=-\widetilde\cap[X]$$
We need the Mayer-Vietories sequence for new chain $\widetilde W_{*}^{\bar{m}}(X)$.
\begin{lemma}
For $X$ is a PL pseudomanifold. $Y_1$ and $Y_2$ are closed subpseudomanifold of $X$, and $X=Y_1 \cup Y_2$. Then the short exact sequence of modified chain holds:
$$0 \rightarrow \widetilde W_{*}^{\bar{m}}\left(Y_{1} \cap Y_{2}\right) \stackrel{i_{1} \oplus i_{2}}{\rightarrow} \widetilde W_{*}^{\bar{m}}\left(Y_{1}\right) \oplus \widetilde W_{*}^{\bar{m}}\left(Y_{2}\right) \stackrel{i_{1}-i_{2}}{\longrightarrow} \widetilde W_{*}^{\bar{m}}(X) \rightarrow 0$$
\end{lemma}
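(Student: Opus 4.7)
The plan is to follow the classical proof of Mayer--Vietoris by reducing to \emph{small chains} that are supported in a single $Y_i$, and then invoking subdivision invariance of the modified chain complex. Exactness at the left and middle positions of the sequence I would handle directly. If $i_1 \oplus i_2$ sends $c$ to zero then $c = 0$ because both component maps are inclusions of subspaces. If $i_1(c_1) - i_2(c_2) = 0$ in $\widetilde W_*^{\bar m}(X)$, then $c_1 = c_2$ as chains in $X$; since $c_1$ is supported in $Y_1$ and $c_2$ in $Y_2$, the common element is supported in $Y_1 \cap Y_2$, and by the prescription $\widetilde W_*^{\bar m}(Y_1 \cap Y_2) = W_*^{\bar m}(Y_1 \cap Y_2) \cap \widetilde W_*^{\bar m}(X)$ it automatically lies in $\widetilde W_*^{\bar m}(Y_1 \cap Y_2)$.

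For surjectivity of $i_1 - i_2$ I would first introduce the subcomplex $\widetilde W_*^{\bar m}(X)^{\mathcal U} \subset \widetilde W_*^{\bar m}(X)$ spanned by basis elements whose support lies in a single $Y_i$. On this subcomplex surjectivity is tautological: a small basis element supported in $Y_1$ is the image of $(c,0)$, and one supported in $Y_2$ is the image of $(0,-c)$. To reduce to this case I would use iterated barycentric subdivision. By Lemma \ref{lmofw} every basis element of $\widetilde W_*^{\bar m}(X)$ is minimal and modeled on a cone, hence supported in the star of a vertex; enough subdivisions shrink these stars to lie in one of the $Y_i$. Combining this with the subdivision invariance argument of Section 4 --- which rests on the cone formulas \eqref{c5} and \eqref{c6} for $\widetilde W_*^{\bar m}$ --- shows that the inclusion $\widetilde W_*^{\bar m}(X)^{\mathcal U} \hookrightarrow \widetilde W_*^{\bar m}(X)$ is a chain equivalence, after which the general surjectivity follows by chasing a representative of $c$ through this equivalence.

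The main obstacle is the iterative character of the modification. The spaces $U_{lag}^j[k]$ for $j > s_k + 1$ are defined by a global constraint on the boundary (namely $\partial(\sum \lambda_i u^j_i) \in U_{lag}^{j-1}[k]$) rather than a pointwise condition on each summand, so it is not a priori clear that a support-splitting of a modified chain lands in the corresponding modified chains of $Y_1$ and $Y_2$. This is exactly where the definitional choice $\widetilde W_*^{\bar m}(Y) = W_*^{\bar m}(Y) \cap \widetilde W_*^{\bar m}(X)$ pays off: the intersection is closed under $\partial$ by construction, so restriction by support lands in $\widetilde W_*^{\bar m}(Y_i)$ whenever the original chain is in $\widetilde W_*^{\bar m}(X)$ and the pieces are individually allowable. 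Working after sufficient subdivision, each small basis element is supported in a neighborhood meeting at most one odd-codimensional non-Witt stratum, so the Lagrange data at different strata decouple; the compatibility condition \ref{condition1} then guarantees the iterated choices of Lagrange subspaces at levels $[0], \ldots, [k]$ remain coherent under this splitting, closing the argument.
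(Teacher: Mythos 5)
The paper states this lemma but does not give a proof of it, so there is no argument in the paper to compare against; I can only assess your proposal on its own terms. Your handling of exactness at the first two positions is fine: injectivity of $i_1\oplus i_2$ is immediate, and the middle-exactness argument uses the paper's definition $\widetilde W_*^{\bar m}(Y)=W_*^{\bar m}(Y)\cap\widetilde W_*^{\bar m}(X)$ correctly, since a chain supported in $Y_1\cap Y_2$ that lies in $\widetilde W_*^{\bar m}(X)$ with allowable boundary supported there as well does land in $\widetilde W_*^{\bar m}(Y_1\cap Y_2)$.

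The surjectivity step, however, has a genuine gap. You introduce the small-chain subcomplex $\widetilde W_*^{\bar m}(X)^{\mathcal U}$ and correctly observe that $i_1-i_2$ hits it. You then argue that the inclusion $\widetilde W_*^{\bar m}(X)^{\mathcal U}\hookrightarrow\widetilde W_*^{\bar m}(X)$ is a chain homotopy equivalence, and conclude surjectivity ``by chasing a representative through the equivalence.'' That deduction does not follow: a chain homotopy equivalence is not an equality, and it does not transport surjectivity at the chain level. If $r$ is a homotopy inverse to the inclusion, with homotopy $h$, then picking a preimage of $r(c)$ on the small side only produces an element whose image is $r(c)=c-(\partial h+h\partial)(c)$, i.e.\ $c$ up to a boundary. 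That is sufficient for the Mayer--Vietoris long exact sequence on homology, but the lemma as stated asserts a short exact sequence of chain complexes, and your argument does not establish that.

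To close the gap you would need either (a) to prove that $\widetilde W_*^{\bar m}(X)^{\mathcal U}=\widetilde W_*^{\bar m}(X)$ on the nose --- which requires showing that every minimal basis element, supported on a star $\mathrm{St}(\hat\sigma,T')$, lies entirely inside one of $Y_1$ or $Y_2$; this is automatic when $\sigma\notin Y_1\cap Y_2$, but when $\sigma\in Y_1\cap Y_2$ the star straddles both $Y_i$, and minimal elements in $\widetilde W_*^{\bar m}$ are generally larger than single simplices, so this is not obvious --- or (b) to weaken the lemma to exactness of the small-chain sequence plus a separate chain-equivalence statement, which is what your argument actually proves. Note also that the appeal to iterated barycentric subdivision cannot repair (a): the lemma concerns the fixed triangulation used to build $\widetilde W_*^{\bar m}$, and subdividing does not separate the two halves of a cover that share a common subcomplex, since the star of any vertex in $Y_1\cap Y_2$ always meets both sides at every stage.
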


\begin{lemma} \label{pl}
Suppose X is an $n$-dimensional oriented non Witt space with the condition \ref{condition1}. The general Poincare duality map $\widetilde{\mathbb{P}}$\  from \  $\widetilde{W}_{\bar{m}}^{*}(X)$ to $\widetilde{W}^{ \bar{n}}_{n-*}(X)$ :
$$\widetilde{\mathbb{P}}: \widetilde{W}_{\bar{m}}^{i}(X) \to \widetilde{W}^{ \bar{n}}_{n-i}(X).$$
is geometrically controlled chain equivalence .
\begin{proof} 
The proof is similar with Lemma \ref{p1}. We still use the mathematical induction to prove it, first we assume the Poincare duality holds for the $i\leq k$ dimension.  It is enough to show Poincare duality hold for $k+1$ dimension pseudomanifold case . Because of Mayer-Vietories sequence, we can focus the star of a $j$ dimensional simplex $\sigma$ named $Y$ in $X$ which is enough to prove Poincare duality in geometrical control category. In fact, if $Z$ is the link of $\hat\sigma$ in the first barycentric subdivision, we have $\left.\widetilde W_{i}^{\bar{m}}(X)\right|_{\mathrm{St} \hat{\sigma}}=\widetilde W_{i}^{\bar{m}}(C(Z)).$ It is enough to prove the relative chain map $$\widetilde{\mathbb{P}}:\widetilde{W}_{\bar m}^i(Y,\partial Y)\to \widetilde{W}^{\bar n}_{k+1-i}(Y).$$ is chain equivalence. 

By the induction, the dual operator $\widetilde{\mathbb{P}}=-\cap[\partial Y]:$
$$ \widetilde W_{\bar{p}}^{i}(\partial Y) \rightarrow \widetilde W_{n-i}^{\bar{q}}(\partial Y)$$ is chain equivalence $\partial Y$ because of the dimension. Consider the diagonal approximation $\widetilde\bigtriangleup[\partial Y]$:
 
 $$\widetilde\bigtriangleup[\partial Y]=\sum_{|\omega|+|\theta|=j} \omega \otimes \theta+\partial(\sum_{|x|+|y|=j+1} a_{x y} x \otimes y)$$
 Here $\omega$ and $\theta$ are basis of $\widetilde{\mathrm{IH}}^{\bar m}_{*}(\partial Y)$ and $\widetilde{\mathrm{IH}}^{\bar n}_{*}(\partial Y)$ respectively. 
 Let $x$ and $y$ be basis of $\widetilde{W}^{\bar m}_{*}(\partial Y)$ and $\widetilde{W}^{\bar n}_{*}(\partial Y)$.
By assumption, $Y=\hat \sigma * \partial (Y)$ . In the case $j=2s_k$ and $\mathrm{IH}^n_{s_i}[i](L(\chi_{n-2s_i-1},x),Q)\neq0 $, we have the homology formula :
\begin{equation}
\widetilde{\mathrm{IH}}^{\bar m}_{i}(Y)=
\begin{cases}
\widetilde{\mathrm{IH}}^{\bar m}_{i}(\partial Y) \quad  i<s_k=j-\bar m(j+1)\\
\mathrm{H}_{lag_k}(Y)\in \mathrm{IH}^{\bar m}_{s_k}(\partial Y) \quad i=s_k=j-\bar m(j+1) \\
0 \quad  \text{otherwise}.
\end{cases}
\end{equation}
\begin{equation}
\widetilde{\mathrm{IH}}^{\bar n}_{i}(Y)=
\begin{cases}
\widetilde{\mathrm{IH}}^{\bar n}_{i}(\partial Y) \quad  i<s_k=j-\bar n(j+1)\\
\mathrm{H}_{lag_k}(Y)\in \mathrm{IH}^{\bar m}_{s_k}(\partial Y) \quad i=s_k=j-\bar n(j+1) \\
0 \quad  \text{otherwise}.
\end{cases}
\end{equation}
When $|\omega|+|\theta|=j=2s_k$, remember that there does not exist  $h_i\otimes h_j$ in the lemma \ref{l5}, just one of $\omega$ or $\theta$ is a boundary. 

In other cases, we have :
\begin{equation}
\widetilde{\mathrm{IH}}^{\bar m}_{i}(Y)=
\begin{cases}
\widetilde{\mathrm{IH}}^{\bar m}_{i}(\partial Y) \quad  i\leq j-\bar m(j+1)  \\
0 \quad  otherwise   .
\end{cases}\end{equation}
\begin{equation}
\widetilde{\mathrm{IH}}^{\bar n}_{i}(Y)=
\begin{cases}
\widetilde{\mathrm{IH}}^{\bar n}_{i}(\partial Y) \quad  i\leq j-\bar n(j+1)  \\
0 \quad  otherwise.
\end{cases}\end{equation}
When $|\omega|+|\theta|=j$,  only one of $\omega$ or $\theta$ is a boundary. So we conclude  $\omega \otimes \theta$ is a boundary. That means there exist  $\tilde\omega \otimes \tilde\theta $ such that
 $$\sum\limits_{|\omega|+|\theta|=j} \omega \otimes \theta=\partial (\sum\limits_{|\widetilde{\omega}|+|\widetilde{\theta}|=j+1} \tilde\omega \otimes \tilde\theta\ )$$

Moreover, we get $\widetilde\bigtriangleup[\partial Y]$ is the boundary of $\sum \widetilde{\omega} \otimes \widetilde{\theta}+\sum a_{x y} x \otimes y$.
Because we know $\partial \widetilde\bigtriangleup[Y]=\widetilde\bigtriangleup[\partial Y]$. Next we combine the lemma \ref{l5} to get :
$$ \widetilde\bigtriangleup[Y]=\sum_{|\tilde\omega|+|\tilde\theta|=j+1} \tilde\omega \otimes \tilde\theta+\sum_{|x|+|y|=j+1} a_{x y} x \otimes y $$
Then we can define the duality operator $\widetilde{\mathbb{P}}: \widetilde W_{\bar{p}}^{i}(Y,\partial Y) \rightarrow \widetilde W_{n-i}^{\bar{q}}(Y)$. 
Because of lemma  \ref{l5}, this map is the homology isomorphism. Because the quasi-isomorphism of free chain complex is chain equivalence, the dual map $\widetilde{\mathbb{P}}$ is chain equivalence. 

It is similar to prove Poincare duality in geometrical control category because the map in the star of simplex is geometrically controlled. 
\end{proof}
\end{lemma}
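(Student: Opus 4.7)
The plan is to run an induction on $\dim X = n$ and reduce, via a Mayer--Vietoris argument, the global assertion to a local statement on the star of a simplex, which is a cone on the intrinsic link. The base case $n=0$ is trivial. Assume the duality is a geometrically controlled chain equivalence for every qualifying non Witt space of dimension $\le k$; we must upgrade this to dimension $k+1$. Using the Mayer--Vietoris sequence for $\widetilde W^{\bar m}_*(X)$ (established in the lemma immediately preceding) and the five-lemma in the geometrically controlled category, it suffices to verify the \emph{relative} chain equivalence
\[
\widetilde{\mathbb P}:=-\widetilde{\cap}[Y]\,:\,\widetilde W_{\bar m}^{i}(Y,\partial Y)\longrightarrow \widetilde W^{\bar n}_{k+1-i}(Y)
\]
when $Y=\mathrm{St}(\hat\sigma,T'')=\hat\sigma * \partial Y$ is the closed star of a vertex in the double barycentric subdivision, so that $\partial Y$ is the link $Z$ and $Y\cong C(Z)$. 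By induction $\widetilde{\mathbb P}|_{\partial Y}$ is a chain equivalence, so the task is entirely local on the cone.

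The first step is to produce the Poincare duality map internally from the diagonal approximation $\widetilde\Delta$ constructed in the previous lemma. Writing
\[
\widetilde\Delta[\partial Y]=\sum_{|\omega|+|\theta|=k}\omega\otimes\theta+\partial\Bigl(\sum_{|x|+|y|=k+1}a_{xy}\,x\otimes y\Bigr),
\]
with $\omega,\theta$ running over homology representatives and $x,y$ over chain generators, I would argue that every summand $\omega\otimes\theta$ is itself a boundary inside $\widetilde W^{\bar m}_*(Y)\otimes \widetilde W^{\bar n}_*(Y)$. This is where the cone formulae of Section 6.1.2 enter: for $j\ne 2s_k$ one of $|\omega|,|\theta|$ exceeds the vanishing bound $j-\bar m(j+1)$, so that factor is a boundary and hence so is the tensor; for $j=2s_k$ with nontrivial middle homology the cone formula gives $\widetilde{\mathrm{IH}}^{\bar m}_{s_k}(C(Z))=\mathrm{H}_{\mathrm{lag}_k}$, and the symmetry of the compatible Lagrangian decomposition forces exactly one of $\omega,\theta$ to lie in the annihilator and therefore to bound. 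Combined with the chain-map identity $\partial\widetilde\Delta[Y]=\widetilde\Delta[\partial Y]$ and Lemma \ref{l5}, this lets me write
\[
\widetilde\Delta[Y]=\sum_{|\tilde\omega|+|\tilde\theta|=k+1}\tilde\omega\otimes\tilde\theta+\sum_{|x|+|y|=k+1}a_{xy}\,x\otimes y,
\]
i.e.\ the relative fundamental cycle $[Y,\partial Y]$ is represented on the cone.

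Next I would feed this back into the definition $\widetilde{\mathbb P}=\varepsilon\otimes 1\circ(\mathrm{id}\otimes\widetilde\Delta)$ to compute the induced map on homology. By the relative cone formulae and Lemma \ref{l5}, the source homology $\widetilde{\mathrm{IH}}_{\bar m}^{i}(Y,\partial Y)$ and the target homology $\widetilde{\mathrm{IH}}^{\bar n}_{k+1-i}(Y)$ are both concentrated in the range allowed by the Lagrangian cone formula, and the formula for $\widetilde\Delta[Y]$ just derived shows that $\widetilde{\mathbb P}$ pairs dual basis elements nondegenerately, giving a homology isomorphism. Since both complexes are free and finitely generated over the relevant ring (and in the equivariant case over $C^*_r(\Gamma)$), a quasi-isomorphism is automatically a chain equivalence. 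To upgrade this to a \emph{geometrically controlled} chain equivalence I invoke Roe's diagonal-criterion (Theorem \ref{landg}): the construction produces $\widetilde{\mathbb P}$ with finite propagation (the image of a basis element supported on $\mathrm{St}(\hat\sigma)$ stays inside $\mathrm{St}(\hat\sigma)$), its diagonal part is the local equivalence just verified, and so the global map is a chain equivalence in the geometrically controlled category.

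The main obstacle will be the non-Witt dimension $j=2s_k$: the naive acyclicity used on Witt strata fails, and one must use the compatible Lagrangian condition (Definition \ref{cl}) both to kill the offending diagonal term $h_i\otimes h_j$ in $\widetilde\Delta$ and to prove that the remaining representatives pair nondegenerately. This is precisely why the iterated construction of Section 6.1.2 is required: the Lagrangian choices at different odd codimensions must glue along the cobordisms $L(c_ic_{i+1})$ between intrinsic links, and breaking the argument into the strata $\chi_{n-2s_k-1}$ one at a time via the filtration $W^{P_{2s_k+1}}_*[k]\subset W^{P_{2s_k-1}}_*[k]$ is what keeps the local computation on each star tractable. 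Once the local step is verified at each non-Witt codimension, the Mayer--Vietoris patching goes through formally, and assembling all pieces yields the stated chain equivalence.
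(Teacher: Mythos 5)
Your argument follows the same route as the paper: induction on dimension, Mayer--Vietoris reduction to the star of a simplex viewed as a cone on the intrinsic link, analysis of $\widetilde\Delta[\partial Y]$ using the cone formula, the compatible Lagrangian structure to handle the non-Witt codimension $2s_k$, and concluding via the quasi-isomorphism-of-free-complexes principle together with Roe's local-global criterion for geometric control. The only cosmetic difference is that you make the invocation of the five-lemma and Theorem \ref{landg} explicit where the paper leaves them implicit; the underlying proof is the same.
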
 

Finally, we can get the result below. 
\begin{theorem} If $X$ is the n dimensional oriented pseudomanifold satisfy the condition \ref{condition1}, then Poincare dual map $\widetilde{\mathbb{P}}$ is geometrically controlled chain equivalence for the chain $\widetilde{W}^{ \bar{m}}_{*}$.  In other words, $X$ is a geometrically controlled Poincare pseudomanifold. In this case, we call it $L$ space.
\end{theorem}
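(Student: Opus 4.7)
The plan is to prove this theorem by reducing via Mayer--Vietoris to the local statement on stars of simplices, then invoking the cone formulas and diagonal approximation already established in section \ref{ssnw}. First I would establish a Mayer--Vietoris short exact sequence for $\widetilde W^{\bar m}_*$ analogous to the one used in Lemma \ref{pl}; this follows because each iterative modification $W^{\bar m}_*[i](X)$ is obtained by adding or removing geometrically controlled submodules supported on stars of simplices meeting the stratum $\chi_{n-2s_i-1}$, so intersections with closed subpseudomanifolds restrict compatibly. Then I would induct on $n = \dim X$: the zero-dimensional case is trivial, and for the inductive step it suffices to show that for every simplex $\sigma$ with star $Y = \hat\sigma * \partial Y$, the relative Poincar\'e duality map $\widetilde{\mathbb P}: \widetilde W^{i}_{\bar m}(Y, \partial Y) \to \widetilde W^{\bar n}_{n-i}(Y)$ is a chain equivalence, assuming duality holds on $\partial Y$ by induction.

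For the local argument, I would expand $\widetilde\Delta[\partial Y]$ using the diagonal approximation constructed in the previous lemma as $\sum_{|\omega|+|\theta|=\dim\partial Y}\omega\otimes\theta + \partial\bigl(\sum a_{xy}\,x\otimes y\bigr)$, then lift this to a decomposition of $\widetilde\Delta[Y]$ by using the cone formulas of section \ref{cf}. The key local input is Lemma \ref{l5}, which gives $\mathrm{H}_k(\widetilde W^{\bar m}_*(C(Z))\otimes\widetilde W^{\bar n}_*(C(Z)))=0$ for $k$ above the dimension of the link: when $|\omega|+|\theta|$ hits a critical value $2s_k$, the symmetric choice of $\mathrm{H}_{lag_k}$ versus $\mathrm{H}_{lag_k}^\perp$ forces exactly one of $\omega$ or $\theta$ to be a boundary in $\widetilde W^{\bar m}_*(Y)$ or $\widetilde W^{\bar n}_*(Y)$, replicating the mechanism of Lemma \ref{pl}. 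This produces the needed lift and yields a homology isomorphism; since both sides are free chain complexes the quasi-isomorphism upgrades to a chain equivalence, and geometric control is automatic because all constructions occur within stars of uniformly bounded size (Theorem \ref{landg} applies to the diagonal parts).

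The main obstacle, and the essential new content beyond the conical case of section 3, is the verification that the \emph{iterative} modifications do not spoil the Witt or vanishing conditions at lower strata. Concretely, after performing the $[i]$-th modification to handle a non-Witt odd codimensional stratum $\chi_{n-2s_i-1}$, one must check that the chain $\widetilde W^{\bar m}_*[i](X)$ still has trivial (or Lagrangian) link homology at all previously treated strata, so that Theorem \ref{abd} extends throughout the induction. The argument here is that the added pieces $U^j_{lag}[i] + U^j_o[i]$ are supported on joins of the form $\tau^{j-s_i-1}_\ell * v_\ell$ with $v_\ell$ a cycle in the intrinsic link of $\chi_{n-2s_i-1}$, and hence contribute nothing to the local homology near strata of smaller codimension. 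Finally, the compatibility condition \ref{cl} ensures the Lagrangian bundle $U^j_{lag}[i]$ patches globally along each stratum, so the local duality assembled from the Mayer--Vietoris pieces glues to a global geometrically controlled chain equivalence $\widetilde{\mathbb P}$, which by definition makes $X$ an $L$ space.
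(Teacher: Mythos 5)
Your proposal is correct and follows essentially the same route as the paper: the theorem is an immediate consequence of Lemma \ref{pl}, whose proof proceeds exactly as you describe — Mayer--Vietoris reduction to stars of simplices, induction on dimension, expansion of $\widetilde\Delta[\partial Y]$ via the cone formulas of section \ref{cf}, the vanishing Lemma \ref{l5} and the Lagrangian structure forcing exactly one of $\omega$, $\theta$ to be a boundary, followed by the quasi-isomorphism-to-chain-equivalence upgrade and geometric control from the star-local argument. Your explicit flagging of the need for Theorem \ref{abd} (iterative modifications not disturbing lower-codimension Witt conditions) and the global patching via the compatibility condition \ref{cl} are precisely the ingredients the paper relies on, so the two arguments coincide in substance.
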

The proof of next Lemma is totally same with the the proof of lemma \ref{pdp} because of natural property of diagonal approximation.
\begin{lemma}
 $\widetilde{\mathbb{P}}^*$ is chain homotopy to  $(-1)^{i(n-i)} \widetilde{\mathbb{P}}$ in the geometrically controlled category. The two maps
 $$\widetilde{\mathbb{P}}:\left(\widetilde{W}_{\bar{n}}^{i}(X), b^{*}\right) \stackrel{\widetilde{\cap}[X]}{\longrightarrow}\left(\widetilde{W}_{n-i}^{\bar{n}}(X), b\right)$$
 and 
 $$\widetilde{\mathbb{P}}^{\prime}:\left(\widetilde{W}_{\bar{n}}^{i}(X), b^{*}\right) \stackrel{\widetilde\cap[X]}{\longrightarrow}\left(\widetilde{W}_{n-i}^{\bar{m}}(X), b\right) \stackrel{\iota}{\rightarrow}\left(\widetilde{W}_{n-i}^{\bar{n}}(X), b\right)$$
are chain homotopic in the geometrically controlled category.
 \end{lemma}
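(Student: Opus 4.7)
The plan is to follow closely the proof of Lemma \ref{pdp} from the conical case, which itself adapts the argument of Appendix C in \cite{X}. Both statements reduce to the uniqueness-up-to-chain-homotopy of diagonal approximations for the modified complexes (the analogue in the general non-Witt setting of the uniqueness proved in Lemma \ref{diagonala}), together with an elementary manipulation of the cap product formula $\alpha \widetilde{\cap} [X] = \varepsilon(\alpha \otimes \widetilde{\Delta}[X])$.

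For the first statement, I would introduce the graded flip isomorphism $T: \widetilde{W}_{*}^{\bar{n}}(X) \otimes \widetilde{W}_{*}^{\bar{n}}(X) \to \widetilde{W}_{*}^{\bar{n}}(X) \otimes \widetilde{W}_{*}^{\bar{n}}(X)$, $T(x \otimes y) = (-1)^{|x||y|} y \otimes x$, which is manifestly geometrically controlled since it acts diagonally on the tensor basis. A direct unwinding of the definition of the adjoint cap product gives
\[
\widetilde{\mathbb{P}}(\alpha) = \varepsilon(\alpha \otimes \widetilde{\Delta}[X]), \qquad \widetilde{\mathbb{P}}^{*}(\alpha) = \varepsilon(\alpha \otimes (T \circ \widetilde{\Delta})[X]),
\]
up to the graded sign $(-1)^{i(n-i)}$. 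Since $T \circ \widetilde{\Delta}$ is again a diagonal approximation into the same target, uniqueness provides a geometrically controlled chain homotopy $H$ with $T \circ \widetilde{\Delta} - \widetilde{\Delta} = \partial H + H \partial$; applying $\varepsilon(\alpha \otimes H[X])$ then produces the required chain homotopy between $\widetilde{\mathbb{P}}^{*}$ and $(-1)^{i(n-i)} \widetilde{\mathbb{P}}$.

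For the second statement, I would exhibit two diagonal approximations at the chain level: the given $\widetilde{\Delta}: W_{*}^{\bar{0}}(X) \to \widetilde{W}_{*}^{\bar{n}}(X) \otimes \widetilde{W}_{*}^{\bar{n}}(X)$, and the diagonal approximation $\widetilde{\Delta}': W_{*}^{\bar{0}}(X) \to \widetilde{W}_{*}^{\bar{m}}(X) \otimes \widetilde{W}_{*}^{\bar{n}}(X)$ already supplied by the Section 6 construction. The composition $(\iota \otimes 1) \circ \widetilde{\Delta}'$ is again a diagonal approximation into $\widetilde{W}_{*}^{\bar{n}}(X) \otimes \widetilde{W}_{*}^{\bar{n}}(X)$, so uniqueness again supplies a geometrically controlled chain homotopy with $\widetilde{\Delta}$. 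Evaluating on $[X]$ and pairing with $\alpha$ converts this homotopy into the sought chain homotopy between $\widetilde{\mathbb{P}}$ and $\widetilde{\mathbb{P}}'$, since by construction $\widetilde{\mathbb{P}}'(\alpha) = \varepsilon(\alpha \otimes (\iota \otimes 1) \circ \widetilde{\Delta}'[X])$.

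The main obstacle I anticipate is verifying that the chain homotopies produced by the acyclic-models argument are genuinely geometrically controlled, rather than merely defined at the algebraic level. I would address this by revisiting the inductive step in the uniqueness proof of $\widetilde{\Delta}$: the preimage $\zeta$ at each stage can be chosen within the star of the simplex carrying the input basis element, so the propagation of $h_{i+1}$ is uniformly bounded by the diameters of stars in the bounded-geometry triangulation of $X$. Once this uniform bound is in place, the evaluation map $\varepsilon(\,\cdot\, \otimes H[X])$ clearly preserves geometric control, and both chain homotopies follow in the controlled category.
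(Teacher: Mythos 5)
Your proposal is correct and follows essentially the same route as the paper: both reduce the two statements to the uniqueness-up-to-chain-homotopy of the diagonal approximation, using the flip isomorphism $T(x\otimes y)=(-1)^{|x||y|}y\otimes x$ for the first claim and the comparison of $\widetilde{\Delta}$ with the inclusion-composed diagonal approximation for the second, then evaluating against $[X]$. Your added attention to why the acyclic-models homotopies are geometrically controlled (choosing $\zeta$ within stars of bounded diameter) is a point the paper leaves implicit, but it does not change the argument.
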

Then $(\widetilde{W}^{ \bar{n}}_{*}(X),b,\widetilde{\mathbb{P}})$ is the geometrically controlled Poincare complex. Let $$T=\frac{1}{2}\left(\widetilde{\mathbb{P}}+(-1)^{(n-i) i} \widetilde{\mathbb{P}}^{*}\right).$$ And we define $\widetilde{E}^{\bar n}_*(X)$ to be Hilbert space of $\widetilde{W}^{ \bar{n}}_{*}(X)$ with canonical inner product determined by a natural basis of minimal elements. $(\widetilde{E}^{\bar n}_*(X),b,T)$ is a n-dimensional Hilbert- Poincare complex. 
\clearpage
\subsection{General Case}
In this section, we consider the $n$-dimensional oriented pseudomanifold $X$ with a collection of exceptional stratums $\{\chi_{n-2s_i-1}\}_i$, $i\in \mathbb{Z}$. We require $s_i>s_j$ when $i>j$. For each $i$, the Witt condition \ref{nwitt} fails for the the intrinsic link $L(\chi_{n-2s_i-1},x)$.
It means there are stratums that for the lower middle perversity intersection homology about the intrinsic link $\{L(\chi_{n-2s_i-1},x)\}$ is not trivial
$$\mathrm{IH}^m_{s_i}(L(\chi_{n-2s_i-1},x),Q)\neq 0.$$ 
Witt condition \ref{nwitt} holds for all other odd codimensional stratum $\{\chi_{n-2k_j-1}\}$  when $k_j\neq s_i $. Here the pseudomanifold  has trivial rational lower middle intersection homology of the intrinsic link :
 $$\mathrm{IH}^m_{k_j}(L(\chi_{n-2k_j-1},x),Q)= 0.$$

We still use the triangulation which is finer than the stratification. Because of Witt condition,  $W^{\bar m}_*(X)$ is not equivalent to $W^{\bar n}_*(X)$. We need to decompose the difference space between $W^{\bar m}_*(X)$ and $W^{\bar n}_*(X)$. 
$$W^{\bar{m}}_*(X) =W^{P_{2r+1}}_*(X) \subset W^{P_{2r-1}}_*(X) ...\subset W^{P_{1}}_*(X)=W^{\bar{n}}_*(X).$$ 
  Here we change from the highest codimension $s_1$. When we consider the $X-X_{2s_1+1}$, it is still a oriented pseudomanifold where the lower middle perversity intersection homology about the intrinsic link $\{L(\chi_{n-2s_i-1},x)\}$ is not trivial . It is just the situation in section \ref{ssnw}.  

If there is a compatible Lagrange structure on the link  $L(\chi_{n-2s_2-1})$, we can construct the chain $\widetilde W^{m}_j(X-X_{n-2s_1-1})$ based on \ref{finalw}. Let us redefine that 
$ W^{m}_*(X)[1] = \widetilde W^{m}_*(X-X_{n-2s_1-1})$ and $ W^{n}_*(X)[1] = \widetilde W^{n}_*(X-X_{n-2s_1-1})$ for the chain in the stratum of $x \in X_{n-2s_1-1}$. We know $W^{m}_*(X)[1]$ and $ W^{n}_*(X)[1]$ are two equivalent chain and it does not affect the Witt condition of higher codimensional stratum. Hence we need to consider the next odd codimensional stratum $s_2$ where the non Witt conditions fails. We can keep construct  $\widetilde W^{m}_*(X)[2]$ based on the Lagrange structure of $L(\chi_{n-2s_2-1})$, all of this is similar with section \ref{compat}. If all odd codimensional stratum where the Witt condition fails has compatible Lagrange structure, the final interpolation chain is defined to be $\widetilde W^{\bar m}_*(X)$. With similar step we can build the diagonal approximation based on new chains. There is no difference between previous section and general situation. Finally, we can get the result below.
\begin{theorem} If $X$ is the n dimensional oriented pseudomanifold has enough compatible Lagrange structure to construct $\widetilde W^{\bar m}_*(X)$, then Poincare dual map with fundamental class $[X]$ is geometrically controlled chain equivalence for the chain $\widetilde{W}^{ \bar{m}}_{*}(X)$. In other words, $X$ is a geometrically controlled Poincare pseudomanifold. 

\end{theorem}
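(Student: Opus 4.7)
The plan is to reduce the general case to an iterated application of the single-exceptional-stratum construction of Section 6.0.1, ordered by codimension. Let $\{\chi_{n-2s_i-1}\}$ with $s_1 < s_2 < \cdots$ enumerate the odd-codimensional strata where the Witt condition fails. I will induct on $i$, constructing a sequence of modified filtered chain complexes $W^{\bar m}_*(X)[i]$ interpolating $W^{\bar m}_*(X) \subset \cdots \subset W^{\bar m}_*(X)[i] \subset \cdots \subset W^{\bar n}_*(X)$, where at the $i$-th stage we have already ``repaired'' the failure of self-duality coming from the strata $\chi_{n-2s_1-1}, \ldots, \chi_{n-2s_i-1}$ by using the compatible Lagrange structures hypothesized on their intrinsic links. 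The base case is trivial (set $W^{\bar m}_*(X)[0] = W^{\bar m}_*(X)$).

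For the inductive step, assume $W^{\bar m}_*(X)[i-1]$ has been constructed and is chain equivalent (via inclusion) to the corresponding upper perversity version $W^{\bar n}_*(X)[i-1]$ on the open subspace $X \setminus X_{n-2s_i-1}$. Near the stratum $\chi_{n-2s_i-1}$, the local structure is a bundle with fiber $C(L(\chi_{n-2s_i-1},x))$ over the base stratum, exactly as in Section 6.0.1. I will apply the compatible-Lagrange-structure construction of \ref{compat} verbatim to decompose the difference space $U^j[i]$ into $U^j_{lag}[i] \oplus {U^j_{lag}}^\perp[i] \oplus U^j_o[i]$ via the fiberwise Lagrange splitting $\mathrm{IH}^{\bar m}_{s_i}(L) = \mathrm{H}_{lag_i} \oplus \mathrm{H}_{lag_i}^\perp$, and define $W^{\bar m}_*(X)[i]$ by adding $U^j_{lag}[i] + U^j_o[i]$ to $W^{\bar m}_*(X)[i-1]$, with the dual modification producing $W^{\bar n}_*(X)[i]$. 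The crucial locality observation, analogous to Theorem \ref{abd}, is that modifications at codimension $2s_i+1$ only alter chains in dimensions $s_i < j < n-s_i$ and hence do not disturb either the Witt-type vanishing $\mathrm{IH}^{\bar m}_{k_j}(L(\chi_{n-2k_j-1},x),\mathbb{Q})=0$ at lower-codimension non-exceptional strata, or the previously installed Lagrange splittings at higher-codimension strata $s_1, \ldots, s_{i-1}$ (since those involve links of strictly higher codimension whose intrinsic link homology is computed in a different dimension).

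After finitely many steps, set $\widetilde{W}^{\bar m}_*(X) := W^{\bar m}_*(X)[\infty]$ and similarly for $\bar n$. By induction, the inclusion $\widetilde{W}^{\bar m}_*(X) \hookrightarrow \widetilde{W}^{\bar n}_*(X)$ is a chain equivalence, generalizing Lemma \ref{l34}. Next, the cone formulas of Section \ref{cf} persist in the iterated setting: at a cone on a link $Z$, $\widetilde{\mathrm{IH}}^{\bar m}_*(C(Z))$ is either the usual truncation, or, at the critical dimension $s_k$ when $Z$ has dimension $2s_k$ and the Witt condition fails, equals the Lagrangian $\mathrm{H}_{lag_k}$. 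This is exactly what is needed to run the acyclic models argument producing a diagonal approximation
$$\widetilde{\Delta} : W^{\bar 0}_*(X) \to \widetilde{W}^{\bar m}_*(X) \otimes \widetilde{W}^{\bar n}_*(X),$$
unique up to chain homotopy, just as in Lemma \ref{diagonala}. With $\widetilde{\Delta}$ in hand, define the cap product and the Poincaré map $\widetilde{\mathbb{P}} := -\widetilde{\cap}[X]$.

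The final step is the chain equivalence of $\widetilde{\mathbb{P}}$. I will run the Mayer–Vietoris induction of Lemma \ref{pl} on the dimension of $X$, reducing to the local case $Y = \hat\sigma * \partial Y$ where $Y$ is the closed star of a simplex. Applying the Künneth formula together with the iterated cone formulas computes $\mathrm{H}_*(\widetilde{W}^{\bar m}_*(C(Z)) \otimes \widetilde{W}^{\bar n}_*(C(Z)))$ and shows it vanishes above the expected dimension (the analogue of Lemma \ref{l5}); the key input at each exceptional dimension $2s_k$ is that the Lagrangian structure forbids nontrivial classes $h \otimes g$ with $h,g$ both in $\mathrm{H}_{lag_k}$ from appearing in the image of $\widetilde{\Delta}_{2s_k}$. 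This lets one express $\widetilde{\Delta}[\partial Y]$ as the boundary of a chain in $\widetilde{W}^{\bar m}_*(Y) \otimes \widetilde{W}^{\bar n}_*(Y)$, yielding the relative duality map, which is a quasi-isomorphism of free chain complexes and therefore a chain equivalence; since the construction is local on stars of simplices with uniformly bounded propagation, the equivalence is automatically geometrically controlled in the sense of Section \ref{geom}.

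The main obstacle is bookkeeping the iterated modifications so that, at each step $i$, the Lagrangian decomposition installed at stratum $\chi_{n-2s_i-1}$ remains compatible with the earlier decompositions at higher-codimension exceptional strata, and the chain complexes $W^{\bar m}_*(X)[i]$ remain well-defined closed subcomplexes (i.e., $\partial$ preserves them). This requires showing that the fibrewise Lagrange bundle at $\chi_{n-2s_i-1}$ is coherent with the previously modified chains on the deeper strata, which in turn is guaranteed by the compatibility condition \ref{cl} imposed in the hypothesis. Once this bookkeeping is set up cleanly, the remainder of the argument is essentially a transcription of Section 6.0.1 with indices now ranging over the full collection of exceptional strata.
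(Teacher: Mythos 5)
Your proposal follows essentially the same approach as the paper's proof: iterate the single-exceptional-stratum construction of Section 6.0.1 over the exceptional strata, use the non-interference of successive modifications (the analogue of Theorem \ref{abd}) to keep the bookkeeping consistent, then build the diagonal approximation by acyclic models and prove duality by Mayer--Vietoris with the iterated cone formulas, exactly as in Lemma \ref{pl}. One minor slip in phrasing: with the ordering $s_1 < s_2 < \cdots$, the earlier strata $\chi_{n-2s_1-1},\ldots,\chi_{n-2s_{i-1}-1}$ have \emph{lower}, not higher, codimension than $\chi_{n-2s_i-1}$; the content of Theorem \ref{abd} is exactly that modifications at codimension $2s_i+1$ leave intrinsic-link homology at those lower codimensions untouched.
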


Moreover,$(\widetilde{W}^{ \bar{n}}_{*}(X),b,\widetilde{\mathbb{P}})$ is a geometrically controlled Poincare complex. We can define the Hilbert Poincare complex after completion. K homology of signature is defined with the similar method. 

\begin{remark}
In \cite{A}, Albin, Leichtnam, Mazzeo and Piazza define the smoothly stratified space with depth $k$ in mathematical induction. Let the Smooth manifold be depth $0$ and assume space with depth $K$ has been defined. Then the space with depth $k+1$. The self dual mezzoperversity should be connected with compatible Lagrange structure for the general Cheeger space . 
\end{remark}

\bibliographystyle{amsplain}
\bibliography{main}
\end{document}